\theoremstyle{plain}
 \newtheorem{thm}{Theorem}[section]
 \newtheorem{prop}{Proposition}[section]
 \newtheorem{lem}{Lemma}[section]
 \newtheorem{cor}{Corollary}[section]
\theoremstyle{definition}
 \newtheorem{exm}{Example}[section]
 \newtheorem{dfn}{Definition}[section]
\numberwithin{equation}{section}
\renewcommand{\leq}{\leqslant}
\renewcommand{\geq}{\geqslant}
\renewcommand{\setminus}{\smallsetminus}
\newcommand{\bfA}{\mathbb{A}}
\newcommand{\bfS}{\mathbb{S}}
\newcommand{\bfM}{\mathbb{M}}
\newcommand{\bfD}{\mathbb{D}}
\newcommand{\bfC}{\mathbb{C}}
\newcommand{\bfO}{\mathbb{O}}
\newcommand{\R}{\mathbb{R}}
\newcommand{\C}{\mathscr{C}}
\newcommand{\scrS}{\mathscr{S}}
\newcommand{\scrA}{\mathscr{A}}
\newcommand{\scrD}{\mathscr{D}}
\newcommand{\M}{\mathbb{M}}
\newcommand{\bfH}{\mathbb{H}}
\newcommand{\MO}{\mathbb{M}_{\bfO}}
\newcommand{\ud}{\mathrm{d}}
\newcommand{\vep}{\epsilon}
\newcommand{\Prob}{P}
\newcommand{\E}{E}
\newcommand{\es}{\emptyset}
\newcommand{\vague}{\stackrel{\lower0.2ex\hbox{$\scriptscriptstyle
                    \it{v} $}}{\rightarrow}}
\newcommand{\weak}{\stackrel{\lower0.2ex\hbox{$\scriptscriptstyle
                    \it{w} $}}{\rightarrow}}
\newcommand{\what}{\stackrel{\lower0.2ex\hbox{$\scriptscriptstyle
                    \it{\hat{w}} $}}{\rightarrow}}
\newcommand{\eqdis}{\stackrel{\lower0.2ex\hbox{$\scriptscriptstyle
                    \mathrm{d}$}}{=}}
\newcommand{\distr}{\stackrel{\lower0.2ex\hbox{$\scriptscriptstyle
                    \it{d} $}}{\rightarrow}}
\def\dsk{d_{\text{sk}}}
\def\bX{\boldsymbol X}
\def\C{\mathcal{C}}
\def\cumsum{\text{CUMSUM}}
\def\dinfty{d_\infty}
\def\Rplus{\mathbb{R}_+}
\def\polar{\text{POLAR}}
\def\gpolar{\text{GPOLAR}}
\def\projp{\text{PROJ}_p}
\def\cadlag{c\`adl\`ag{ }}
\title[Regularly varying measures]{Regularly Varying Measures on
  Metric Spaces: Hidden Regular Variation and Hidden Jumps}
\subjclass[2010]{28A33,60G17,60G51,60G70}
\keywords{regular variation, multivariate heavy tails, hidden regular
  variation, tail estimation, L\'evy {process}}
\author[Lindskog]{Filip Lindskog}
\address{
Filip Lindskog\\Department of Mathematics \\ 
KTH Royal Institute of Technology,
100 44 Stockholm,
Sweden}
\email{lindskog@kth.se}
\author[Resnick ]{Sidney I. Resnick}
\address{Sidney I. Resnick\\School of ORIE, Cornell University,
Ithaca, NY 14853} \email{sir1@cornell.edu}
\author[Roy ]{Joyjit Roy}
\address{Joyjit Roy\\School of ORIE, Cornell University,
Ithaca, NY 14853} \email{jr653@cornell.edu}
\thanks{S. Resnick and J. Roy were supported by Army MURI grant
  W911NF-12-1-0385 to Cornell University. Resnick acknowledges
  hospitality, space and support from Columbia University during his
  sabbatical year 2012-2013.} 
\begin{document}

\setcounter{page}{1}
\thispagestyle{empty}

\begin{abstract}
  We  develop a framework
for regularly varying measures on complete separable metric
  spaces $\mathbb{S}$ with a closed cone $\mathbb{C}$ removed,
extending  material in \cite{hult:lindskog:2006a,
  das:mitra:resnick:2013}. Our framework provides a flexible
way to consider hidden regular
  variation and allows  simultaneous regular variation properties to
  exist at different scales and provides potential for more accurate
  estimation of probabilities of risk regions. We apply our framework
  to iid random variables in $\mathbb{R}_+^\infty$ with marginal
  distributions having regularly varying tails and to
\cadlag L\'evy processes
whose L\'evy measures have regularly varying
  tails. In both cases, an infinite number of regular variation
  properties coexist distinguished by different scaling functions and
  state spaces.
\end{abstract}
\bibliographystyle{plainnat}
\maketitle

\section{Introduction}
This paper discusses a framework for regular variation and heavy tails
for distributions of metric space valued random elements and applies this framework to
regular variation for measures on $\mathbb{R}^\infty_+$ and $\bfD([0,1],\R)$.

Heavy tails appear in diverse contexts such as risk
management; quantitative finance and economics; complex networks 
of data and telecommunication transmissions; as well as the
rapidly expanding field of social
networks. Heavy tails are also colloquially called power law tails or
Pareto tails, especially in one dimension.  The mathematical formalism
for discussing heavy tails is the theory of regular
variation, originally formulated on $\mathbb{R}_+$ and extended to more
general spaces. See, for instance, \cite{resnickbook:2008,
resnickbook:2007, resnick:1986, seneta:1976,
embrechts:kluppelberg:mikosch:2003, 
bingham:goldie:teugels:1987,
dehaan:1970, dehaan:ferreira:2006,
geluk:dehaan:1987,
hult:lindskog:2006a}.

One approach to estimating the probability of a remote risk region relies
on asymptotic analysis from the theory of extremes or heavy tail
phenomena. Asymptotic methods come with the obligation to choose
an asymptotic regime among potential competing regimes.
This is often
tantamount to choosing a state space for the observed random
elements as well as a scaling. For example, in $\mathbb{R}^2_+$,  for a risk vector
$X=(X_1,X_2)$, 
if we need to estimate $P[ X>x]=P[X_1>x_1,\,X_2>x_2]$ for large
$x$, should the state space for asymptotic analysis be
$[0,\infty]^2\setminus \{(0,0)\}$ or $(0,\infty]^2$?
Ambiguity for the choice of asymptotic regime led to the 
idea of {\it coefficient of tail dependence\/} \citep{peng:1999,
  schlather:2001,
bruun:tawn:1998,
coles:heffernan:tawn:1999,ledford:tawn:1997,ledford:tawn:1996},
{\it hidden regular variation\/} (hrv)
\citep{maulik:resnick:2005,mitra:resnick:2011hrv,
mitra:resnick:2013,
resnick:2002a,
heffernan:resnick:2005, 
  resnick:2008, resnickbook:2007}
and the conditional extreme value (cev) model
\citep{heffernan:tawn:2004,das:resnick:2011b,das:resnick:2011,
  das:mitra:resnick:2013, resnick:zeber:2012kernel}.

Due to the scaling inherent in the definition of regular variation, a
natural domain for regularly varying tails is a 
region closed
under scalar multiplication and usually the domain is a
cone centered at the origin. 
Commonly used cones include
$\mathbb{R}_+$, $\mathbb{R}^d_+$, or the two sided versions allowing
negative values that are natural in finance and economics.
However, as argued in 
\cite{das:mitra:resnick:2013}, there is need for other cones as well,
particularly when asymptotic independence or asymptotic full dependence
(\cite[Chapter 5]{resnickbook:2008}, \cite{sibuya:1960}) is present.
Going beyond finite dimensional spaces, there is a need for a comprehensive theory covering 
spaces such as
$\mathbb{R}^\infty_+$ and function spaces. Fortunately a good
framework for such a theory {of regular variation on metric spaces after removal of a
point} was {created in 
\cite{hult:lindskog:2006a}}. 
  The need to remove more than a point, perhaps a closed set and
certainly a closed cone, was argued in
\cite{das:mitra:resnick:2013}. These ideas build on
 $w^\#$-convergence in \citep[Section A2.6]{daley:vere-jones:1988}.

This paper has a number of goals:
\begin{enumerate}
\item We follow the lead of {
\cite{hult:lindskog:2006a} and}
develop a theory of regularly varying measures on complete separable
metric spaces $\mathbb{S}$ with a closed cone $\mathbb{C}$ 
removed.   Section \ref{secconv} develops
a topology on the space of measures on $\mathbb{S}\setminus
\mathbb{C}$ which are finite on regions at positive distance from
$\mathbb{C}$. This topology allows creation of  mapping
theorems (Section \ref{subsec:mappingTheorems})
that encourage continuity arguments and is designed to allow simultaneous 
regular variation properties to exist at different scales as is
considered in hidden regular variation.

\item We apply the {general material of
Section \ref{secconv}  to two significant applications.}
\begin{enumerate}
\item In Section \ref{seqSpace} we focus on $\R_+^p$ and 
  $\mathbb{R}^\infty_+$, the space of sequences with non-negative
  components.  An iid sequence $X=(X_1,X_2,\dots ) \in
  \mathbb{R}^\infty_+$ such that $P[X_1>x]$ is regularly varying has a
  distribution which is regularly varying on $  \mathbb{R}^\infty_+
  \setminus \bfC_{\leq j}$ for any $j\geq 1$, where
$\bfC_{\leq j}$ are sequences with at most $j$ positive
components. Mapping theorems (Section \ref{subsec:mappingTheorems}) allow extension to
the regular variation properties of $S=(X_1,X_1+X_2, X_1+X_2 + X_3, \dots)$ in $
\mathbb{R}^\infty_+$ minus the set of non-decreasing sequences which
are constant after the $j$th component. See Section \ref{subsubsec:HRV}.
For reasons of simplicity and taste, we
restrict discussion to $\mathbb{R}^\infty_+$ but with {modest}
 effort, results could be extended to $\mathbb{R}^\infty$.
We also discuss regular variation of the distribution of a sequence of
Poisson points in $\R_+^\infty$ (Section \ref{subsubsec:PoissonPts}).
\item The $\R_+^\infty$ discussion of Poisson points in Section \ref{subsubsec:PoissonPts}
can be leveraged in a natural way to consider (Section \ref{sec:hrvLevy}) 
 regular variation of the distribution of a L\'evy
  process whose L\'evy measure $\nu$
is regularly varying: $\lim_{t\to\infty} t\nu\bigl(b(t)x,\infty \bigr)
=x^{-\alpha},\,x>0$, for some scaling function $b(t) \to\infty$.
We reproduce the result
{\cite{hult:lindskog:2005SPA,hult:lindskog:2007}} 
that
the limit measure of regular variation with scaling $b(t)$ on $\bfD([0,1],\R)\setminus \{0\}$ 
concentrates on \cadlag functions with {\it one\/} positive jump. This raises the natural question of what happened to the rest of the jumps of the L\'evy process that seem to be 
{\it hidden\/} by the scaling $b(t)$.  
We are able to  generalize for any $j\geq 1$ to convergence under
the weaker normalization $b(t^{1/j})$ on a smaller space in which the limit measure
concentrates on non-decreasing functions with $j$ positive jumps.
Again, as in the study of $\mathbb{R}_+^\infty$, we focus for simplicity only on large
positive jumps of the L\'evy process.
\end{enumerate}
\item A final goal is to clarify the proper definition of regular
  variation in metric spaces. For historical reasons, regular variation is usually associated
  with scalar multiplication but what does this mean in a general
  metric space? Traditional
 definitions are in  Cartesian coordinates in finite dimensional spaces and the form
  of the definition may not survive change of coordinates. For
  example, in $\mathbb{R}^p_+$, a random vector $X$ (in Cartesian
  coordinates) has a regularly varying distribution if for some
  scaling function $b(t)\to\infty$ we have $tP[X /b(t) \in \cdot \,] $
  converging to a limit. If we transform to polar coordinates
  $X\mapsto (R,\Theta):=(\|X\|, X/\|X\|)$, the
limit is taken on 
  $tP[(R/b(t), \Theta) \in \cdot \,]$ which appears to be subject to
  a different notion of scaling. 
The two
  convergences are equivalent but look different unless one allows for
  a more flexible definition of scalar multiplication. We discuss
  requirements for scalar multiplication in Section \ref{secconv}
  along with some examples; related
  discussion is in \cite{
balkema:embrechts:2007,
balkema:1973,
meerschaert:scheffler:2001}.
\end{enumerate}

The existing theory for regular variation 
on, say, $\mathbb{R}^d_+$, uses the set-up of vague convergence.
A troubling consequence is the need to use the one point
uncompactification \cite[page 170ff]{resnickbook:2007}  which adds
lines through infinity to the state space. When regular
variation is defined on the cone $[0,\infty]^d \setminus \{0\}$, 
limit measures cannot charge 
lines through infinity. However, on proper subcones of $[0,\infty]^d
\setminus \{0\}$ this
is no longer true and this creates some mathematical havoc:
Convergence to types arguments can fail and limit measures may not be
unique: In given examples \citep[Example
5.4]{das:mitra:resnick:2013}, under one normalization  the limit measure
concentrates on lines through infinity and under another it
concentrates on  finite points.
Another difficulty is that the polar coordinate transform 
$x \mapsto (\|x\|, x / \|x\|)$ 
cannot be defined on lines through infinity. One way to patch 
things up is to retain the one point un-compactification but 
demand all limit measures have no mass on
lines through infinity but this 
does not resolve all difficulties since
the unit sphere $\{x: \|x\|=1\}$ 
defined by the norm $x\mapsto \|x\|$ may not be
compact on a subcone such as $(0,\infty]^d$. Another way forward which we deem cleaner and
more suitable to general spaces where
compactification is more involved, is not to compactify and just to
define tail regions as subsets of the metric space at positive distance from 
the deleted closed set. This is the approach given in Section \ref{secconv}.

\section{Convergence of measures in the space $\MO$}\label{secconv}

Let $(\mathbb{S}, d)$ be a complete separable metric space. 
The open ball centered at $x \in \mathbb{S}$ with radius
$r$ is written
$B_{x,r}=\{y \in \bfS : d(x,y) < r\}$ and these open sets generate 
$\scrS$, the Borel $\sigma$-algebra on $\bfS$.
 For $A \subset \bfS$, let $A^\circ$ and $A^-$ denote the interior
and closure of $A$, respectively, and let $\partial A = A^-\setminus
A^\circ$ be the boundary of $A$. 
Let $\C_b$ denote the class of real-valued, non-negative, bounded and continuous
functions on $\bfS$, and let $\M_b$ denote the class of finite Borel
measures on $\scrS$. A basic neighborhood of $\mu \in \M_b$ is a set
of the form $\{\nu \in \M_b :|\int f_i \ud\nu -\int f_i \ud\mu| <
\vep, i = 1,\dots,k\}$, where $\vep > 0$ and $f_i \in \C_b$ for
$i=1,\dots,k$. Thus a sub-basis for  $\M_b$ are sets of the form $\{\nu
\in \M_b: \nu(f):=\int f d\nu \in G\}$ for $f\in \C_b$ and $G$ open in $\mathbb{R}_+$.
This equips $\M_b$ with the weak topology and
convergence $\mu_n \to \mu$ in $\M_b$ means
 $\int f \ud \mu_n \to \int f \ud \mu$ for all $f \in \C_b$. See
e.g.~Sections 2 and 6 in \cite{billingsley:1999} for details. 

Fix a closed set $\bfC \subset \bfS$ and set $\bfO=\bfS\setminus
\bfC$, e.g. one possible choice is $\bfO=\bfS\setminus \{s_0\}$ for
$\bfC = \{s_0\}$ for some $s_0\in\bfS$. The subspace $\bfO$ is a
metric subspace of $\bfS$ in the relative topology with
$\sigma$-algebra {$\scrS_{\bfO}=\scrS(\bfO)=\{A : A \subset \bfO, A \in \scrS\}$}.  

Let $\C_{\bfO} =\C(\bfO)$ denote the real-valued, non-negative, bounded and continuous functions $f$ on $\bfO$ such that for each $f$ there exists $r>0$ 
such that $f$ vanishes on $\bfC^r$; we use the notation $\bfC^r=\{x\in
\bfS:d(x,\bfC)<r\}$, where $d(x,\bfC)=\inf_{y\in \bfC}d(x,y)$.
Similarly, we will write
$d(A,\bfC)=\inf_{x \in A,\,y\in \bfC}d(x,y)$ for $A\subset\mathbb{S}$.
We say that a set
$A\in\scrS_{\bfO}$ is bounded away from $\bfC$ if $A \subset \bfS
\setminus\bfC^r$ for some $r > 0$ or equivalently $d(A,\bfC)>0$.
So $\C_{\bfO}$ consists of non-negative continuous functions
whose supports are bounded away from $\bfC$.
Let $\MO$ be the class of Borel measures on $\bfO$ whose restriction
to $\bfS \setminus\bfC^r$ is finite for each $r > 0$. When convenient,
we also write $\mathbb{M}(\mathbb{O})$ or
$\mathbb{M}(\mathbb{S}\setminus \bfC)$.  A basic
neighborhood of $\mu \in \MO$ is a set of the form $\{\nu \in \MO
:|\int f_i \ud \nu - \int f_i \ud \mu| < \vep, \,i = 1,\dots,k\}$, where
$\vep > 0$ and $f_i \in \C_{\bfO}$ for $i=1,\dots,k$. 
A sub-basis is formed by sets of the form 
\begin{align}\label{eq:sub-bas}
\{\nu \in \MO: \nu(f) \in G\}, \quad f \in
\C_{\bfO}, \quad G \text{ open in }\mathbb{R}_+.
\end{align}
Convergence $\mu_n\to\mu$ in $\MO$ is convergence in the topology defined by this base or sub-base. 

For $\mu \in \MO$ and $r>0$, let $\mu^{(r)}$ denote the restriction of $\mu$ to $\bfS \setminus \bfC^r$. Then $\mu^{(r)}$ is finite  and $\mu$ is uniquely determined by its restrictions $\mu^{(r)}$, $r > 0$. Moreover, convergence in $\MO$ has a natural characterization in terms of weak convergence of the restrictions to $\bfS \setminus \bfC^r$.

\begin{thm}[{\bf Portmanteau theorem}]\label{portthm}
Let $\mu,\mu_n \in \MO$.
The following statements are equivalent.
\\
(i) $\mu_n \to \mu$ in $\MO$ as $n\to\infty$.
\\
(ii)  $\int f \ud\mu_n \to \int f \ud \mu$ for each $f \in \C_{\bfO}$
which is also uniformly continuous on $\bfS$. 
\\
(iii) $\limsup_{n\to\infty}\mu_n(F)\leq\mu(F)$ and $\liminf_{n\to\infty}\mu_n(G)\geq\mu(G)$
for all closed $F\in \scrS_{\bfO}$ and open $G\in \scrS_{\bfO}$ and
$F$ and $G$ are bounded away from $\bfC$.
\\
(iv) 
$\lim_{n\to\infty}\mu_n(A)=\mu(A)$ for all $A\in\scrS_{\bfO}$ bounded away from $\bfC$
with $\mu(\partial A)=0$.
\\
{(v)} $\mu_n^{(r)}\to\mu^{(r)}$ in $\M_b(\bfS \setminus \bfC^r)$ for all but at most countably many $r>0$.
\\
(vi) There exists a sequence $\{r_i\}$ with $r_i \downarrow 0$ such that $\mu_n^{(r_i)}\to\mu^{(r_i)}$ in $\M_b(\bfS \setminus \bfC^{r_i})$ for each $i$.
\end{thm}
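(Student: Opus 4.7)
My plan is to establish the classical Portmanteau cycle (i) $\Rightarrow$ (ii) $\Rightarrow$ (iii) $\Leftrightarrow$ (iv) $\Rightarrow$ (i), adapted to the setting where test functions must vanish near $\bfC$ and sets must be bounded away from $\bfC$, and then to link these to the restriction statements (v) and (vi). The implication (i) $\Rightarrow$ (ii) is immediate because the uniformly continuous elements of $\C_{\bfO}$ form a subclass of $\C_{\bfO}$, which is the defining collection of test functions for the topology on $\MO$.

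For (ii) $\Rightarrow$ (iii), given a closed $F\in\scrS_{\bfO}$ with $d(F,\bfC)\geq r>0$, I plan to work with the Urysohn-type functions
\[
f_\delta(x)=\big(1-\delta^{-1}d(x,F)\big)^+,\qquad 0<\delta<r/2.
\]
A triangle-inequality check shows that $f_\delta$ vanishes on $\bfC^{r/2}$, each $f_\delta$ is $(1/\delta)$-Lipschitz and hence uniformly continuous on $\bfS$, and $f_\delta\downarrow\mathbf{1}_F$ as $\delta\downarrow 0$. Applying (ii) and monotone convergence yields the lim sup bound for $\mu_n(F)$, and a dual construction with $g_\delta(x)=\big(1-\delta^{-1}d(x,G^c)\big)^+$ multiplied by a Lipschitz cutoff vanishing on $\bfC^{s/2}$ (where $d(G,\bfC)\geq s$) handles the lim inf bound for open $G$ bounded away from $\bfC$. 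The equivalence (iii) $\Leftrightarrow$ (iv) is the usual $\mu$-continuity-set sandwich, using that closures and interiors of sets bounded away from $\bfC$ are themselves bounded away from $\bfC$. For (iv) $\Rightarrow$ (i), given $f\in\C_{\bfO}$ with support in $\bfS\setminus\bfC^r$ and values in $[0,M]$, I will approximate $\int f\,\ud\mu$ by layer-cake sums $\sum(t_i-t_{i-1})\mu(\{f>t_i\})$ with partition points $t_i$ chosen outside the at most countable set $\{t:\mu(\partial\{f>t\})>0\}$, noting that each level set $\{f>t_i\}$ is bounded away from $\bfC$.

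The bridge to (v) and (vi) rests on the elementary observation that every $f\in\C_{\bfO}$ vanishes on some $\bfC^r$, so $\int f\,\ud\mu_n=\int f\,\ud\mu_n^{(r)}$ and $f$ restricts to a bounded continuous function on the subspace $\bfS\setminus\bfC^r$; this immediately yields (vi) $\Rightarrow$ (i), and a fortiori (v) $\Rightarrow$ (i). For (i) $\Rightarrow$ (v), I first show that $\mu(\{x:d(x,\bfC)=r\})=0$ for all but countably many $r>0$: the level sets are pairwise disjoint and $\mu$ is finite on each $\bfS\setminus\bfC^{r_0}$. For such good $r$, $\partial\bfC^r\subset\{d(\cdot,\bfC)=r\}$ has $\mu$-measure zero, so (iv) applied to $\bfS\setminus\bfC^r$ gives $\mu_n^{(r)}(\bfS\setminus\bfC^r)\to\mu^{(r)}(\bfS\setminus\bfC^r)$. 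For the full weak convergence $\mu_n^{(r)}\to\mu^{(r)}$ in $\M_b(\bfS\setminus\bfC^r)$, I will take any non-negative bounded continuous $g$ on the subspace, extend to $\bfS$ by Tietze, and multiply by the Lipschitz cutoff $\phi_\epsilon(x)=\big(\epsilon^{-1}(d(x,\bfC)-r)^+\big)\wedge 1$, equal to $0$ on $\bfC^r$ and $1$ on $\bfS\setminus\bfC^{r+\epsilon}$; the product lies in $\C_{\bfO}$, (i) gives convergence of its integrals, and the shell error is bounded by $\|g\|_\infty\mu_n(\bfC^{r+\epsilon}\setminus\bfC^r)$, which by (iv) applied to annular sets with good endpoints converges to $\|g\|_\infty\mu(\bfC^{r+\epsilon}\setminus\bfC^r)\to 0$ as $\epsilon\downarrow 0$ along good values. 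Finally (v) $\Rightarrow$ (vi) is trivial upon picking $r_i\downarrow 0$ outside the countable exceptional set.

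The main obstacle I anticipate is the bookkeeping in (i) $\Rightarrow$ (v): one must simultaneously verify countability of the bad values of $r$ and control the mass of $\mu_n$ in thin shells adjacent to $\partial\bfC^r$, making sure all limit interchanges are justified by the finiteness of $\mu$ on regions bounded away from $\bfC$. Once that shell analysis is in place, the remaining implications are direct adaptations of the classical Portmanteau argument.
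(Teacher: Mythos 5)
Your proposal is correct, but it closes the equivalence cycle differently from the paper. The paper proves a single chain (i) $\Rightarrow$ (ii) $\Rightarrow$ (iii) $\Rightarrow$ (iv) $\Rightarrow$ (v) $\Rightarrow$ (vi) $\Rightarrow$ (i); in particular it never proves (iv) $\Rightarrow$ (i) directly, but routes that implication through the restrictions $\mu_n^{(r)}$, deducing (iv) $\Rightarrow$ (v) by observing that for good $r$ (those with $\mu(\partial(\bfS\setminus\bfC^r))=0$, countably many exceptions by the disjointness of the level sets $\{d(\cdot,\bfC)=r\}$) the classical Portmanteau theorem applies on the closed subspace $\bfS\setminus\bfC^r$. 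You instead close the loop with a direct layer-cake argument (iv) $\Rightarrow$ (i), choosing partition points $t_i$ outside the countable set where $\mu(\{f=t\})>0$, and you prove (i) $\Rightarrow$ (v) by hand via Tietze extension plus a Lipschitz cutoff $\phi_\epsilon$ and an annulus estimate $\mu_n(\bfC^{r+\epsilon}\setminus\bfC^r)\to\mu(\bfC^{r+\epsilon}\setminus\bfC^r)\downarrow 0$. Both routes are sound; yours is more self-contained (it does not lean on the subspace Portmanteau theorem and sidesteps the paper's slightly delicate identification of the relative boundary $\partial_{\bfS\setminus\bfC^r}A$ with $\partial A$), while the paper's is shorter because it outsources the weak-convergence step to the classical result. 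Your (ii) $\Rightarrow$ (iii) via explicit Lipschitz Urysohn functions matches the paper's use of its Lemma~\ref{lem:urysohn}. One small slip: your formula $g_\delta(x)=\bigl(1-\delta^{-1}d(x,G^c)\bigr)^+$ for the open-set bound is inverted (it equals $1$ on $G^c$ and vanishes deep inside $G$); you want $\min\{1,\delta^{-1}d(x,G^c)\}$, which increases to $\mathbf{1}_G$ and whose support $G^-\subset\bfS\setminus\bfC^s$ is automatically bounded away from $\bfC$, so the extra cutoff you mention is unnecessary. This is a typo in the sketch, not a flaw in the approach.
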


For proofs, see Section \ref{secproofs}. Note, the result is
true for any general metric space. 

Weak convergence is metrizable (for instance by the Prohorov metric;
see e.g.~p.~72 in \cite{billingsley:1999}) 
and the close relation between weak convergence and convergence in $\MO$ in Theorem
\ref{portthm}(v)-(vi) indicates that the topology in $\MO$ is
metrizable too. 
{With minor modifications of the}
arguments in \cite{daley:vere-jones:1988}, pp.~627-628, we may choose
the metric 
\begin{align}\label{M0metric}
  d_{\MO}(\mu,\nu)=\int_0^{\infty}e^{-r}p_r(\mu^{(r)},\nu^{(r)})[1+p_r(\mu^{(r)},\nu^{(r)})]^{-1}\ud r,
\end{align}
where $\mu^{(r)},\nu^{(r)}$ are the finite restriction of $\mu,\nu$ to $\bfS \setminus \bfC^r$, and $p_r$ is the Prohorov metric on $\M_b(\bfS \setminus \bfC^r)$.
\begin{thm}\label{M0metrizable}
  $(\MO,d_{\MO})$ is a separable and complete metric space.
\end{thm}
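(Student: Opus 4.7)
Symmetry and the triangle inequality for $d_{\MO}$ descend from the Prohorov metrics $p_r$ inside the integrand, while $d_{\MO}(\mu,\nu)=0$ forces $p_r(\mu^{(r)},\nu^{(r)})=0$ for Lebesgue-a.e.\ $r$, so $\mu^{(r)}=\nu^{(r)}$ on a sequence $r_i\downarrow 0$ and thus $\mu=\nu$. The equivalence of $d_{\MO}$-convergence with convergence in $\MO$ follows by combining parts (v)--(vi) of Theorem \ref{portthm} with a dominated-convergence-plus-subsequence argument applied to the integral \eqref{M0metric}, so I may freely invoke the Portmanteau theorem in what follows. The real content is then separability and completeness.

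\textbf{Separability.} For each $k\geq 1$ the closed subspace $\bfS\setminus\bfC^{1/k}\subset\bfS$ inherits a complete separable metric structure, so $\M_b(\bfS\setminus\bfC^{1/k})$ is separable under $p_{1/k}$. Pick a countable dense $\scrD_k$ (e.g.\ rational combinations of Dirac masses at a countable dense set), and regard its elements as measures in $\MO$ by extension by zero; set $\scrD=\bigcup_k\scrD_k$. To prove $\scrD$ is dense, given $\mu\in\MO$ and $\vep>0$, I would pick $k$ so large that $\int_0^{1/k}e^{-r}\,\ud r<\vep/2$ (the integrand in \eqref{M0metric} being bounded by $e^{-r}$), then approximate $\mu^{(1/k)}$ by some $\nu\in\scrD_k$ in $p_{1/k}$, and control $p_r(\mu^{(r)},\nu^{(r)})$ for $r\geq 1/k$ by restriction (which acts as a quasi-contraction on the Prohorov metric across nested subspaces) to make the remaining integral $<\vep/2$.

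\textbf{Completeness.} Given a Cauchy sequence $\{\mu_n\}\subset\MO$, pass to a subsequence with $d_{\MO}(\mu_n,\mu_{n+1})<2^{-n}$. By Tonelli,
\begin{equation*}
\int_0^\infty e^{-r}\sum_{n\ge 1}\frac{p_r(\mu_n^{(r)},\mu_{n+1}^{(r)})}{1+p_r(\mu_n^{(r)},\mu_{n+1}^{(r)})}\,\ud r\le\sum_{n\ge 1}2^{-n}<\infty,
\end{equation*}
so for Lebesgue-a.e.\ $r>0$ the series $\sum_n p_r(\mu_n^{(r)},\mu_{n+1}^{(r)})$ converges, making $\{\mu_n^{(r)}\}$ Cauchy in the complete metric space $(\M_b(\bfS\setminus\bfC^r),p_r)$ and hence convergent in $p_r$ to some $\nu^{(r)}$. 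I would then choose a countable sequence $r_i\downarrow 0$ inside this a.e.\ set with $\nu^{(r_i)}(\partial\bfC^{r_j})=0$ for all $i<j$ (only countably many radii are bad for each $i$); continuity of restriction under weak convergence at continuity sets then gives the consistency $\nu^{(r_j)}=\nu^{(r_i)}\big|_{\bfS\setminus\bfC^{r_j}}$ for $i<j$. Using $\bfO=\bigcup_i(\bfS\setminus\bfC^{r_i})$, one pieces this family into a single Borel measure $\nu$ on $\bfO$ whose restriction to any $\bfS\setminus\bfC^r$ is dominated by some $\nu^{(r_i)}$ and therefore finite, so $\nu\in\MO$. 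Theorem \ref{portthm}(vi) now gives $\mu_n\to\nu$ in $\MO$ along the subsequence, and Cauchy-ness upgrades this to convergence of the full sequence.

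\textbf{Main obstacle.} The delicate step is the piecing-together argument in completeness: the radii $r_i$ must be chosen so that the weak limits $\nu^{(r_i)}$ are simultaneously consistent under restriction (requiring the boundaries $\partial\bfC^{r_j}$ to be null for all smaller-index limits), after which one must verify $\sigma$-additivity and local finiteness on $\bfO$ of the resulting set function. A minor secondary subtlety in separability is the precise way the Prohorov metrics $p_r$ on nested subspaces control each other.
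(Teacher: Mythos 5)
Your proposal is correct and follows essentially the same route as the paper: separability via countable dense subsets of $\M_b(\bfS\setminus\bfC^{1/k})$ combined with a tail estimate on the integral in \eqref{M0metric}, and completeness via Cauchy-ness of the restrictions $\mu_n^{(r)}$, completeness of $\M_b$ on the closed subspaces $\bfS\setminus\bfC^r$, and piecing together the consistent limits into a measure in $\MO$, with Theorem \ref{portthm}(v)--(vi) mediating between metric convergence and $\MO$-convergence. Your $2^{-n}$-subsequence/Tonelli device and the explicit boundary-null choice of the $r_i$ are slightly more careful renderings of steps the paper asserts directly, but they do not constitute a different approach.
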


\subsection{Mapping theorems.}\label{subsec:mappingTheorems}
Applications of weak convergence often rely on continuous mapping
theorems and we present versions for convergence in
$\MO$.  
Consider another separable and complete metric space $\bfS'$ and let $\bfO',\scrS_{\bfO'},\bfC',\M_{\bfO'}$ have the same meaning relative to the space $\bfS'$ as do $\bfO,\scrS_{\bfO},\bfC,\M_{\bfO}$ relative to $\bfS$.

\begin{thm}[Mapping theorem]\label{mapthm}
Let $h: (\bfO,\scrS_{\bfO}) \mapsto (\bfO',\scrS_{\bfO'})$ be a
measurable mapping such that $h^{-1}(A')$ is bounded away from $\bfC$
for any $A'\in\scrS_{\bfO'} \cap h(\bfO)$ bounded away from $\bfC'$, and 
$\mu(D_h)=0$, where $D_h$ is the set of discontinuity points of $h$.
Then $\hat h: \bfM_{\bfO} \mapsto \bfM_{\bfO'} $ defined by $\hat
h(\mu)=\mu {\circ} h^{-1}$ is continuous.
\end{thm}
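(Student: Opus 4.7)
The plan is to invoke the Portmanteau characterization, Theorem \ref{portthm}(iv), at both ends. Given $\mu_n \to \mu$ in $\MO$, I want to deduce $\mu_n\circ h^{-1} \to \mu\circ h^{-1}$ in $\bfM_{\bfO'}$, and by (iv) this reduces to checking $(\mu_n\circ h^{-1})(A') \to (\mu\circ h^{-1})(A')$ for every $A' \in \scrS_{\bfO'}$ that is bounded away from $\bfC'$ and satisfies $(\mu\circ h^{-1})(\partial A')=0$. Setting $B = h^{-1}(A')$, the preimage hypothesis on $h$ ensures $B$ is bounded away from $\bfC$, which is exactly the set condition needed to apply (iv) to $\mu_n \to \mu$ on $B$. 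As a preliminary sanity check, the same hypothesis applied to $\bfS'\setminus(\bfC')^{r'}$ shows that $\hat h(\mu)$ is finite on each $\bfS'\setminus(\bfC')^{r'}$, so $\hat h(\mu) \in \bfM_{\bfO'}$.

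The heart of the argument is the standard boundary inclusion
\[
\partial h^{-1}(A') \;\subset\; h^{-1}(\partial A') \,\cup\, D_h,
\]
valid because if $x \in \partial h^{-1}(A')$ and $h$ is continuous at $x$, then sequences in $h^{-1}(A')$ and in $h^{-1}((A')^c)$ converging to $x$ are mapped to sequences in $A'$ and $(A')^c$ that both converge to $h(x)$, forcing $h(x) \in \partial A'$. Combining this with the hypotheses $\mu(D_h)=0$ and $(\mu\circ h^{-1})(\partial A')=0$ yields $\mu(\partial B)=0$. Theorem \ref{portthm}(iv) applied to $\mu_n \to \mu$ and to $B$ then gives $\mu_n(B) \to \mu(B)$, which is precisely the desired convergence of image-measure masses.

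The principal technical point is the boundary inclusion above; everything else amounts to transferring the "bounded away" condition across $h$ via the given preimage hypothesis, and then re-applying the same Portmanteau criterion upstairs in $\bfM_{\bfO'}$. A minor subtlety is that the preimage hypothesis is stated only for $A' \subset h(\bfO)$, but since $h^{-1}(A') = h^{-1}(A' \cap h(\bfO))$ as subsets of $\bfO$, this restriction is harmless when $A'$ ranges over arbitrary Borel subsets of $\bfO'$ bounded away from $\bfC'$.
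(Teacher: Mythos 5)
Your proof is correct and follows essentially the same route as the paper's: both reduce to the Portmanteau criterion (Theorem \ref{portthm}(iv)) on each side, use the boundary inclusion $\partial h^{-1}(A')\subset h^{-1}(\partial A')\cup D_h$ together with $\mu(D_h)=0$ to get $\mu(\partial h^{-1}(A'))=0$, and transfer the bounded-away condition via the preimage hypothesis. Your explicit justification of the boundary inclusion (which the paper only cites) and your remark on the $A'\cap h(\bfO)$ subtlety are fine additions but do not change the argument.
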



This result is illustrated in Examples \ref{ex:mappingthm}, \ref{ex:mappingthm} and
\ref{eg:polar} and is also needed for considering the generalized polar
coordinate transformation in Section \ref{subsub:polarCont}.  It is
the basis for the approach to regular variation of L\'evy processes in
Section \ref{sec:hrvLevy}.
Theorem \ref{mapthm} is formulated so that $h$ is defined on
$\bfO=\mathbb{S}\setminus \bfC$, rather than on all of $\mathbb{S}$. If
$\bfS=\mathbb{R}_+^p$ and $h(x)=(\|x\|, x/\|x\|)$ is the polar
coordinate transform, then $h$ is not defined at $0$. This lack
of definition is not a problem since
\begin{align*}
h:\bfO:=&\mathbb{R}_+^p
\setminus \{0\} \mapsto \bfO':=(0,\infty)\times \{x \in \mathbb{R}_+^p: \|x\|=1\} \\
=&[0,\infty)\times  \{x \in \mathbb{R}_+^p: \|x\|=1\} \setminus
\Bigl(\{0\}\times  \{x \in \mathbb{R}_+^p: \|x\|=1\}\Bigr).
\end{align*}

The proof is in Section \ref{subsubsec:mapthm} but it is instructive
to quickly consider the special case
where $D_h=\emptyset$ so that $h$ is continuous. In this case $h$
induces a continuous mapping $\hat h:\mathbb{M}_\bfO \mapsto \bfM'_{\bfO'}$
defined by $\hat h(\mu)=\mu \circ h^{-1}$. To see this, look at the
inverse image of a sub-basis set \eqref{eq:sub-bas}: For $G$ open in $\mathbb{R}_+$, and
$f' \in \C_{\bfO'}$,
\begin{align*}
\hat h^{-1}\{\mu' \in \mathbb{M}_{\bfO'} : \mu'(f')\in G\}=&
\{\mu  \in \mathbb{M}_{\bfO} : \mu \circ h^{-1}(f')\in G\}\\=&
\{\mu  \in \mathbb{M}_{\bfO} : \mu (f' \circ h)\in G\}.
\end{align*}
Since $h$ is continuous and $f' \circ h \in \C_{\bfO}$,
$\{\mu  \in \mathbb{M}_{\bfO} : \mu (f' \circ h)\in G\}$ is open in $\mathbb{M}_\bfO$.

Here are two  variants of the mapping theorem. The first allows
application of the operator taking successive partial sums from
$\R^\infty_+ \mapsto \R^\infty_+$ in Proposition \ref{prop:cumsumCont}
and also allows application of the projection map $(x_1,x_2,\dots
)\mapsto (x_1,\dots,x_p)$ from $\R^\infty_+ \mapsto \R^p_+$ in
Proposition \ref{prop:projCont}.
The second variant allows a quick proof that the polar coordinate
transform is continuous on $\R^p_+ \setminus \{0\}$ in Corollary \ref{cor:polarConv}.

\begin{cor}\label{cor:variant1}
Suppose $h:\bfS \mapsto \bfS'$ is uniformly continuous and $\bfC':=h(\bfC)$ is closed in $\bfS'$.
Then $\hat h: \bfM_{\bfO} \mapsto \bfM_{\bfO'} $ defined by $\hat
h(\mu)=\mu {\circ } h^{-1}$ is continuous. 
\end{cor}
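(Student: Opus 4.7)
The plan is to reduce the statement to Theorem \ref{mapthm}. Uniform continuity of $h$ on $\bfS$ certainly implies continuity of its restriction to $\bfO$, so the set $D_h$ of discontinuities of $h$ on $\bfO$ is empty and the condition $\mu(D_h)=0$ in Theorem \ref{mapthm} holds vacuously for every $\mu \in \bfM_{\bfO}$. It therefore suffices to verify the one remaining hypothesis: whenever $A' \in \scrS_{\bfO'}\cap h(\bfO)$ is bounded away from $\bfC'$, its preimage $h^{-1}(A')$ is bounded away from $\bfC$.

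The key step is a single $\epsilon$--$\delta$ estimate. Let $d'$ denote the metric on $\bfS'$, and suppose $A' \subset \bfS'\setminus (\bfC')^r$ for some $r>0$. By uniform continuity of $h$, pick $\delta>0$ such that $d(x,y)<\delta$ implies $d'(h(x),h(y))<r$ for every $x,y \in \bfS$. I would then show $h^{-1}(A') \subset \bfS\setminus \bfC^\delta$, which is exactly the statement that $h^{-1}(A')$ is bounded away from $\bfC$. Indeed, if $x \in h^{-1}(A')$ with $d(x,\bfC)<\delta$, one can choose $y \in \bfC$ with $d(x,y)<\delta$; by the choice of $\delta$, $d'(h(x),h(y))<r$; but $h(y) \in h(\bfC) = \bfC'$, so $d'(h(x),\bfC')<r$, contradicting the fact that $h(x) \in A' \subset \bfS'\setminus (\bfC')^r$.

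With both hypotheses of Theorem \ref{mapthm} now in place, the continuity of $\hat h: \bfM_{\bfO} \to \bfM_{\bfO'}$ follows at once. The role of the assumption that $\bfC' = h(\bfC)$ be closed is essentially bookkeeping: it is needed to guarantee that $\bfO' = \bfS'\setminus \bfC'$ and the notation $(\bfC')^r$ fit into the framework set up in Section \ref{secconv} — uniform continuity alone does not force $h(\bfC)$ to be closed, which is why it is stated as a separate assumption. There is no substantial obstacle; the whole corollary is the short distance-chaining argument above.
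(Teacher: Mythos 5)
Your proposal is correct and is essentially the paper's own argument: the paper also reduces to Theorem \ref{mapthm} (noting $D_h=\es$) and verifies the bounded-away hypothesis via uniform continuity, merely phrasing the distance estimate as a sequential proof by contradiction ($x_n\in h^{-1}(A')$, $y_n\in\bfC$ with $d(x_n,y_n)\to 0$ forcing $d'(A',\bfC')=0$) rather than your direct $\epsilon$--$\delta$ chain. The two are interchangeable, so there is nothing further to add.
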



\begin{cor}\label{cor:2ndVariant}
Suppose $h:\bfS \mapsto \bfS'$ is continuous and either $\bfS$ or $\bfC$ is compact. Then 
 $\hat h: \bfM_{\bfO} \mapsto \bfM_{\bfO'}$ defined by $\hat
 h(\mu)=\mu { \circ} h^{-1}$ is continuous.
\end{cor}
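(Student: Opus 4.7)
The plan is to derive Corollary \ref{cor:2ndVariant} from the Mapping Theorem (Theorem \ref{mapthm}). Since $h$ is continuous, the discontinuity set $D_h$ is empty, so $\mu(D_h)=0$ trivially and the measurability hypothesis on $h$ is automatic. Following the convention of Corollary \ref{cor:variant1}, I take $\bfC' := h(\bfC)$. The only substantive requirement left is the ``bounded away'' preservation condition: for every $A'\in\scrS_{\bfO'}\cap h(\bfO)$ with $d(A',\bfC')>0$, also $d(h^{-1}(A'),\bfC)>0$.

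First note that in either case of the hypothesis $\bfC$ is compact, since a closed subset of a compact metric space is itself compact. Continuity of $h$ then makes $\bfC' = h(\bfC)$ compact, and in particular closed in $\bfS'$, so the setup of Theorem \ref{mapthm} is legitimate. I argue the preservation condition by contradiction. Suppose $A'$ is bounded away from $\bfC'$ with $d(A',\bfC')\geq r>0$, yet there exist sequences $x_n\in h^{-1}(A')$ and $y_n\in\bfC$ with $d(x_n,y_n)\to 0$. By compactness of $\bfC$, pass to a subsequence with $y_{n_k}\to y\in\bfC$; then $x_{n_k}\to y$ as well. Continuity of $h$ yields $h(x_{n_k})\to h(y)\in h(\bfC)=\bfC'$, which contradicts $d(h(x_{n_k}),\bfC')\geq r$ forced by $h(x_{n_k})\in A'$. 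Hence $h^{-1}(A')$ is bounded away from $\bfC$, and Theorem \ref{mapthm} delivers continuity of $\hat h$.

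The only genuine content is the sequential compactness step in the second paragraph; everything else is bookkeeping needed to invoke Theorem \ref{mapthm}. Compactness of $\bfC$ is exactly what prevents $h^{-1}(A')$ from accumulating on $\bfC$ when $A'$ stays away from $\bfC'$, and without it the inverse image of a set bounded away from $\bfC'$ could drift toward $\bfC$, so the hypothesis is essential rather than cosmetic.
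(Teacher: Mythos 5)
Your proof is correct and follows essentially the same route as the paper's: verify the ``bounded away'' hypothesis of Theorem \ref{mapthm} by a sequential compactness argument, extracting a convergent subsequence and using continuity of $h$ to contradict $d'(A',\bfC')>0$. The only (harmless) difference is that you collapse the two cases into one by observing that $\bfS$ compact forces $\bfC$ compact, whereas the paper handles the two cases separately by taking a limit point of $\{x_n\}$ when $\bfS$ is compact and of $\{y_n\}$ when $\bfC$ is compact; your version also makes explicit the closedness of $\bfC'=h(\bfC)$, which the paper leaves implicit.
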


\subsection{Relative compactness in $\M_{\bfO}$}\label{subsec:rc}
Proving convergence sometimes requires a characterization of relative
compactness.
A subset of a
topological space is relatively compact if its closure is
compact. A subset of a metric space is compact if and only if it is
sequentially compact. Hence, $M \subset \MO$ is relatively compact if
and only if  every sequence $\{\mu_n\}$ in $M$ contains a convergent
subsequence.   
For $\mu \in M \subset \MO$ and $r > 0$, let $\mu^{(r)}$ be
the restriction of $\mu$ to $\bfS \setminus \bfC^r$ and
$M^{(r)}=\{\mu^{(r)} : \mu \in M\}$. By Theorem \ref{portthm} (vi) we
have the following characterization of relative compactness.

\begin{thm}\label{rc0}
A subset $M \subset \MO$ is relatively compact if and only if there exists a sequence $\{r_i\}$ with $r_i \downarrow 0$ such that $M^{(r_i)}$ is relatively compact in $\M_b(\bfS \setminus \bfC^{r_i})$ for each $i$. 
\end{thm}

Prohorov's theorem characterizes relative compactness in the weak
topology.  This translates to a characterization
of relative compactness in $\MO$.  

\begin{thm} \label{rc} $M \subset \MO$ is relatively compact if and only if there exists a sequence $\{r_i\}$ with $r_i \downarrow 0$ such that for each $i$ 
  \begin{align}\label{rc1}
    \sup_{\mu \in M} \mu(\bfS \setminus \bfC^{r_i}) < \infty, 
  \end{align}
  and for each $\eta>0$ there exists a compact set $K_i\subset \bfS \setminus \bfC^{r_i}$ such that
  \begin{align}\label{rc2}
    \sup_{\mu\in M}\mu(\bfS \setminus (\bfC^{r_i}\cup K_i)) \leq \eta.
  \end{align}
\end{thm}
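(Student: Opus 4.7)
The plan is to deduce Theorem \ref{rc} from Theorem \ref{rc0} by invoking the classical Prohorov theorem on each of the restriction spaces $\bfS\setminus\bfC^{r_i}$. Theorem \ref{rc0} already reduces relative compactness of $M$ in $\MO$ to relative compactness of the restriction families $M^{(r_i)}$ in $\M_b(\bfS\setminus\bfC^{r_i})$ along some sequence $r_i\downarrow 0$, so all that remains is to characterize this latter relative compactness in the familiar weak-topology language of total mass plus tightness.

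Before invoking Prohorov, I would first verify that each space $\bfS\setminus\bfC^{r_i}$ is Polish. Since $x\mapsto d(x,\bfC)$ is continuous, $\bfC^{r_i}=\{x:d(x,\bfC)<r_i\}$ is open, and therefore $\bfS\setminus\bfC^{r_i}=\{x:d(x,\bfC)\ge r_i\}$ is closed in the complete separable metric space $(\bfS,d)$. A closed subspace of a Polish space is Polish, so Prohorov's theorem applies: a family of finite Borel measures on $\bfS\setminus\bfC^{r_i}$ is relatively compact in the weak topology on $\M_b(\bfS\setminus\bfC^{r_i})$ if and only if it is uniformly bounded in total mass and tight. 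Also note that compact subsets $K_i\subset\bfS\setminus\bfC^{r_i}$ in the subspace topology coincide with compact subsets of $\bfS$ contained in $\bfS\setminus\bfC^{r_i}$, since the two topologies agree on such sets.

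For the necessity direction, assume $M$ is relatively compact in $\MO$. Theorem \ref{rc0} supplies a sequence $r_i\downarrow 0$ such that each $M^{(r_i)}$ is relatively compact in $\M_b(\bfS\setminus\bfC^{r_i})$. Applying Prohorov on each $\bfS\setminus\bfC^{r_i}$, we obtain that $\sup_{\mu\in M}\mu^{(r_i)}(\bfS\setminus\bfC^{r_i})<\infty$, which is exactly \eqref{rc1}, and that for every $\eta>0$ there exists a compact $K_i\subset\bfS\setminus\bfC^{r_i}$ with $\sup_{\mu\in M}\mu^{(r_i)}((\bfS\setminus\bfC^{r_i})\setminus K_i)\le\eta$, which, using $\mu^{(r_i)}=\mu$ on $\bfS\setminus\bfC^{r_i}$, is exactly \eqref{rc2}.

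For sufficiency, assume a sequence $r_i\downarrow 0$ is given along which \eqref{rc1} and \eqref{rc2} hold. Rewriting these as statements about the restrictions $\mu^{(r_i)}$ yields uniform total-mass boundedness and tightness of $M^{(r_i)}$ in $\M_b(\bfS\setminus\bfC^{r_i})$. Prohorov's theorem on the Polish space $\bfS\setminus\bfC^{r_i}$ gives relative compactness of $M^{(r_i)}$ in $\M_b(\bfS\setminus\bfC^{r_i})$ for each $i$, and Theorem \ref{rc0} then lifts this to relative compactness of $M$ in $\MO$. The only nontrivial point along the way is the Polishness of $\bfS\setminus\bfC^{r_i}$, which is immediate from its closedness in $\bfS$; the rest is straightforward translation between $M$ and its restrictions.
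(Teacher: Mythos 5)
Your proposal is correct and follows essentially the same route as the paper: reduce to relative compactness of the restriction families $M^{(r_i)}$ via Theorem \ref{rc0}, then apply Prohorov's theorem on each closed (hence Polish) subspace $\bfS\setminus\bfC^{r_i}$ in both directions. The extra remark on Polishness of $\bfS\setminus\bfC^{r_i}$ is a detail the paper leaves implicit but uses elsewhere, so nothing of substance differs.
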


\subsection{$\bfM$-convergence vs vague convergence.}\label{subsec:MvsVague}
Vague convergence complies with the topology on the space of measures
which are finite on compacta. Regular variation for measures on a space
such as $\R^p_+$ has traditionally been formulated using vague
convergence after compactification of the space. In order to make use
of existing regular variation theory on $\R_+^p$, it is useful to
understand how $\bfM$-convergence is related to vague convergence.

Let $\bfS$ be a complete separable metric space and suppose
$\bfC $ is closed in $\bfS$. Then $\bfM_+ (\bfS\setminus \bfC) $ is
the collection of measures finite on $\mathcal{K}(\bfS\setminus
\bfC)$, the  compacta of $\bfS\setminus \bfC$:
$$\bfM_+ (\bfS\setminus \bfC) =\{\mu: \mu(K )<\infty,\,\forall K
\in \mathcal{K}(\bfS\setminus \bfC)\}.$$
Vague convergence on $\bfM_+ (\bfS\setminus \bfC) $
 means $\mu \mapsto \mu(f)$ is continuous for all
$f\in \C_K^+ (\bfS\setminus \bfC)$, the continuous functions with
{\it compact\/} support. The spaces $\bfM_+ (\bfS\setminus \bfC) $
and $\bfM (\bfS\setminus \bfC) $ are not the same. For example if
$\bfS=[0,\infty)$ and $\bfC=\{0\}$, $\mu \in \bfM (\bfS\setminus \bfC)
$ means $\mu(x,\infty)<\infty$ for $x>0$ but $\mu \in \bfM_+
(\bfS\setminus \bfC) $ means $\mu ([a,b])<\infty$ for
$0<a<b<\infty$. For instance Lebesgue measure is in
$\bfM_+ (\bfS\setminus \bfC) $
but not in $\bfM (\bfS\setminus \bfC) $. 

\subsubsection{Comparing $\bfM$ vs $\bfM_+$.}\label{subsub:compareSame} 
We have the following comparison.

\begin{lem} $\bfM$-convergence implies vague convergence and 
\begin{equation}\label{eq:compare}
\bfM (\bfS\setminus \bfC) \subset \bfM_+ (\bfS\setminus \bfC) ,\quad
\C_K^+(\bfS\setminus \bfC) \subset \C(\bfS\setminus \bfC).
\end{equation}
\end{lem}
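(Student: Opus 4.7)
The plan is to dispatch the three claims in order, observing that each reduces to a single topological fact: a compact subset of $\bfS\setminus\bfC$ is automatically bounded away from the closed set $\bfC$.

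First I would establish this topological fact as the workhorse. If $K\subset\bfS\setminus\bfC$ is compact, then $x\mapsto d(x,\bfC)$ is continuous and strictly positive on $K$ (since $\bfC$ is closed, $d(x,\bfC)=0$ forces $x\in\bfC$). By compactness it attains its minimum, so there is some $r>0$ with $d(x,\bfC)\geq r$ for all $x\in K$, i.e.\ $K\subset\bfS\setminus\bfC^r$.

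Next I would prove the two set inclusions in \eqref{eq:compare}. For $\bfM(\bfS\setminus\bfC)\subset\bfM_+(\bfS\setminus\bfC)$, take $\mu\in\bfM_\bfO$ and any compact $K\subset\bfO$; by the workhorse, $K\subset\bfS\setminus\bfC^r$ for some $r>0$, so $\mu(K)\leq\mu(\bfS\setminus\bfC^r)<\infty$ by definition of $\bfM_\bfO$. For $\C_K^+(\bfS\setminus\bfC)\subset\C(\bfS\setminus\bfC)$, let $f\in\C_K^+(\bfO)$ with compact support $K$; again $K\subset\bfS\setminus\bfC^r$ for some $r>0$, so $f$ vanishes on $\bfC^r$ (and $f$ is non-negative, bounded, continuous), hence $f\in\C_\bfO$.

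Finally, $\bfM$-convergence implies vague convergence is now immediate from the second inclusion: if $\mu_n\to\mu$ in $\MO$, then for any $f\in\C_K^+(\bfO)\subset\C_\bfO$ we have $\int f\,\ud\mu_n\to\int f\,\ud\mu$ by the definition of $\bfM$-convergence (or Theorem \ref{portthm}(i)), which is precisely vague convergence on $\bfM_+(\bfO)$. There is no real obstacle here; the only thing worth emphasizing carefully is that the distance-attainment argument uses both that $\bfC$ is closed and that $K$ is compact and disjoint from $\bfC$, so compact subsets of $\bfO$ are always bounded away from $\bfC$ in the sense used throughout Section \ref{secconv}.
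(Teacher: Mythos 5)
Your proof is correct and follows essentially the same route as the paper's: both reduce everything to the fact that a compact subset of $\bfS\setminus\bfC$ is bounded away from the closed set $\bfC$, and then read off the two inclusions and the convergence implication. You merely spell out the distance-attainment argument that the paper leaves implicit, which is fine.
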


\begin{proof}
If $f\in \C_K^+(\bfS\setminus \bfC) $, its compact support $K \subset
\bfS\setminus \bfC$ must be bounded away from $\bfC$ and hence
$d(K,\bfC)>0$ and $f\in \C(\bfS\setminus \bfC).$ If $\mu \in \bfM
(\bfS\setminus \bfC),$ and $\bfD $ satisfies $d(\bfD,\bfC)>0$, then
$\mu(\bfD)<\infty$. If $K \in \mathcal{K}(\bfS\setminus \bfC)$ then
$d(K,\bfC)>0$ and so $\mu(K) <\infty$, showing any $\mu \in \bfM
(\bfS\setminus \bfC)$ is also in $\bfM_+ (\bfS\setminus \bfC) .$
\end{proof}

{\bf Remark}: Let  
$\bfS=[0,\infty)$ and $\bfC=\{0\}$, and 
$$\mu_n =\sum_{i=1}^{n^2} {\epsilon_{i/n}} \in 
\bfM (\bfS\setminus \bfC) \subset \bfM_+ (\bfS\setminus \bfC).$$
Here and elsewhere, we use the notation ${\epsilon_{x}}$ for the Dirac measure concentrating mass 1 on the point $x$ so that
$\epsilon_x(A) =1$,  if $x\in A$, and  $\epsilon_x(A) =0$, if $x\in A^c$.
We have $\mu_n$ converging to Lebesgue measure in  $\bfM_+ (\bfS\setminus \bfC)$ but $\{\mu_n\}$ does
not converge in  $ \bfM (\bfS\setminus \bfC)$. If $f$ is $0$ on
$(0,1),$ linear on $(1,2)$, $f(2)=1$ and $f$ is constant on
$(2,\infty)$, then $f \in \C (\bfS\setminus \bfC) $ but
$\mu_n (f) \geq \sum_{i={2n+1}}^{n^2} 1 =n^2-2n \to\infty.$

\subsection{Proofs}\label{secproofs}
\subsubsection{Preliminaries.}\label{subsubsec:pre}
We begin with two well known
preliminary lemmas  in topology. The second one is just a version of
Urysohn's lemma 
\cite{
dudley:1989, royden:1988}
for metric spaces.

\begin{lem}\label{lem:distance}
Fix a set $B \subset \bfS$. Then
\\
(i) $d(x, B)$ is an uniformly continuous function in $x$.
\\
(ii) $d(x, B) = 0$ if and only if $x \in B^{-}$.
\end{lem}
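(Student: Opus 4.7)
For part (i), the plan is to establish the standard $1$-Lipschitz bound $|d(x, B) - d(y, B)| \leq d(x, y)$, which immediately yields uniform continuity with modulus equal to the identity. To get this, I would fix $x, y \in \bfS$ and any $b \in B$, apply the triangle inequality $d(x, b) \leq d(x, y) + d(y, b)$, then take the infimum over $b \in B$ on the right to obtain $d(x, B) \leq d(x, y) + d(y, B)$. Rearranging gives $d(x, B) - d(y, B) \leq d(x, y)$, and swapping the roles of $x$ and $y$ yields the absolute value bound. There are no real obstacles here; it is a one-line computation from the definition of infimum and the triangle inequality.

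For part (ii), the plan is a direct argument using the characterization of closure via sequences, which is available because $(\bfS, d)$ is metric. For the forward direction, suppose $x \in B^-$. Then there is a sequence $\{b_n\} \subset B$ with $d(x, b_n) \to 0$, which forces $d(x, B) = \inf_{b \in B} d(x, b) = 0$. For the converse, suppose $d(x, B) = 0$. By definition of infimum, for each $n \geq 1$ there exists $b_n \in B$ with $d(x, b_n) < 1/n$, so $b_n \to x$, hence $x \in B^-$.

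Neither part should present a genuine obstacle; both are foundational facts about the distance function in a metric space. The only minor care needed is in part (i) to note that the inequality $d(x, B) \leq d(x, y) + d(y, B)$ requires the infimum to be taken after the triangle inequality is applied (not before), and in part (ii) to handle the edge case $B = \emptyset$ if one wanted to be pedantic, though in context $B$ is implicitly nonempty since otherwise $d(x, B) = +\infty$ by the usual convention and both statements are vacuous.
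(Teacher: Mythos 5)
Your proposal is correct and follows essentially the same route as the paper: part (i) via the generalized triangle inequality $d(x,B)\leq d(x,y)+d(y,B)$ giving the Lipschitz bound, and part (ii) as a direct consequence of the definition of the infimum (the paper leaves the sequence argument implicit, but it is the same deduction).
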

\begin{proof}
(i) follows from the following generalization of the triangle inequality. For $x,y \in \bfS$,
\begin{align*}
d(x, B) \leq d(x,y) + d(y, B).
\end{align*}
(ii) is an easy deduction from the definiton of $d(x,B)=\inf_{z\in B}d(x,z)$.
\end{proof}

\begin{lem}\label{lem:urysohn}
For any two closed sets $A,B \subset \bfS$ such that $A \cap B = \es $, there exists a uniformly continuous function $f$ from $\bfS$ to $[0,1]$ such that $f \equiv 0$ on $A$ and $f \equiv 1$ on $B$ 
\end{lem}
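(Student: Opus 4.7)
The plan is to exhibit $f$ explicitly as a ratio of distance functions, in the metric-space style of Urysohn's construction, and to read off both the prescribed boundary values and the uniform continuity from Lemma \ref{lem:distance}. Specifically, I set
\[
f(x) = \frac{d(x,A)}{d(x,A) + d(x,B)}.
\]
First I check that this is well defined everywhere on $\bfS$. By Lemma \ref{lem:distance}(ii), $d(x,A)=0$ iff $x\in A^-=A$, and similarly for $B$; since $A\cap B=\es$, the two distances cannot vanish simultaneously, so the denominator is strictly positive on $\bfS$. The range is clearly contained in $[0,1]$, and evaluating at $x\in A$ (respectively $x\in B$) gives $f(x)=0$ (respectively $f(x)=1$), as required.

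Next I argue that $f$ is uniformly continuous. By Lemma \ref{lem:distance}(i) the maps $x\mapsto d(x,A)$ and $x\mapsto d(x,B)$ are $1$-Lipschitz on $\bfS$. A direct manipulation of the ratio gives
\[
f(x) - f(y) \;=\; \frac{d(x,A)\,d(y,B) \,-\, d(y,A)\,d(x,B)}{[d(x,A)+d(x,B)]\,[d(y,A)+d(y,B)]}.
\]
Rewriting the numerator as
\[
d(x,A)\bigl[d(y,B)-d(x,B)\bigr] \,+\, d(x,B)\bigl[d(x,A)-d(y,A)\bigr]
\]
and applying the Lipschitz estimates bounds its absolute value by $[d(x,A)+d(x,B)]\,d(x,y)$, whence
\[
|f(x)-f(y)| \;\leq\; \frac{d(x,y)}{d(y,A)+d(y,B)}.
\]
Combining this with the symmetric estimate obtained by interchanging $x$ and $y$, and using the triangle-inequality lower bound $d(\cdot,A)+d(\cdot,B)\geq d(A,B)$ on the denominator, turns the pointwise bound into a uniform Lipschitz estimate for $f$, delivering uniform continuity of $f$ on $\bfS$.

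The main obstacle is the control of the denominator in the Lipschitz estimate: the numerator is handled routinely via Lemma \ref{lem:distance}(i), but elevating the pointwise continuity of the ratio to a uniform modulus requires a uniform lower bound on $d(\cdot,A)+d(\cdot,B)$. The closedness of $A$ and $B$ together with disjointness is precisely what prevents this sum from collapsing, and it is at this step that the metric-space structure and the hypotheses of the lemma are essentially used.
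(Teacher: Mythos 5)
Your construction is the same as the paper's (the ratio $d(x,A)/[d(x,A)+d(x,B)]$), and your verification of well-definedness, of the boundary values, and the algebraic Lipschitz estimate
\[
|f(x)-f(y)|\;\leq\;\frac{d(x,y)}{d(y,A)+d(y,B)}
\]
are all correct. The gap is in the very last step. You assert that ``the closedness of $A$ and $B$ together with disjointness is precisely what prevents this sum from collapsing,'' i.e.\ that $d(A,B)>0$. That implication is false in a general metric space: take $\bfS=\R^2$ with $A=\R\times\{0\}$ and $B=\{(x,1/x):x>0\}$. These are disjoint closed sets with $d(A,B)=0$, so $\inf_x\,[d(x,A)+d(x,B)]=0$ and your denominator bound degenerates. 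Worse, in that situation \emph{no} uniformly continuous separating function exists: choosing $a_n\in A$, $b_n\in B$ with $d(a_n,b_n)\to 0$, any $f$ with $f\equiv 0$ on $A$ and $f\equiv 1$ on $B$ satisfies $|f(a_n)-f(b_n)|=1$ while $d(a_n,b_n)\to 0$. So the statement itself, as printed, is not salvageable by a better argument; disjointness of closed sets only yields a \emph{continuous} separating function.

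To repair this you must add the hypothesis $d(A,B)>0$, under which your estimate immediately gives the Lipschitz bound $|f(x)-f(y)|\leq d(x,y)/d(A,B)$ and the proof closes. This is harmless for the paper: in the only places the lemma is invoked (the implication (ii)$\Rightarrow$(iii) of the Portmanteau theorem), the two closed sets are $F$ and $\bfS\setminus F^{\vep}$, respectively $G_\vep$ and $\bfS\setminus G$, which are at distance at least $\vep>0$ by construction. The paper's own one-line proof (``easily checked from Lemma 2.4'') glosses over exactly the same point, so you have in effect located an imprecision in the source; but as a standalone argument, your final sentence claims something false and the hypothesis $d(A,B)>0$ must be made explicit.
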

\begin{proof}
Define the function $f$ as
\begin{align*}
f(x)= \frac{ d(x,A) } { d(x,A) + d(x,B) }.
\end{align*}
The desired properties of $f$ are easily checked from Lemma \ref{lem:distance}.
\end{proof}

\begin{lem}\label{lem:uncountableunion}
If $A\in\scrS_{\bfO}$ is bounded away from $\bfC$, $A=\cup_{i\in I} A_i$ for an uncountable index set $I$, disjoint sets $A_i\in\scrS_{\bfO}$, and $\mu(A)<\infty$, then $\mu(A_i)>0$ for at most countably many $i$.
\end{lem}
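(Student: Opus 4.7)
\textbf{Proof proposal for Lemma \ref{lem:uncountableunion}.} The plan is to reduce the uncountable indexing to a countable union of finite sets by a standard pigeonhole argument based on disjointness and finiteness of $\mu(A)$. The hypothesis that $A$ is bounded away from $\bfC$ only serves to ensure that $\mu(A)<\infty$ is a sensible assumption for $\mu\in\MO$; the real content is the additivity of $\mu$ on the disjoint pieces $A_i$.

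For each integer $n\geq 1$, I would define
\begin{equation*}
B_n = \{i \in I : \mu(A_i) > 1/n\}.
\end{equation*}
The first step is to show that $B_n$ is finite for every $n$. Take any finite subcollection $F\subset B_n$. Since the $A_i$ are pairwise disjoint and measurable with $\bigcup_{i\in F}A_i\subset A$, additivity of $\mu$ gives
\begin{equation*}
\frac{|F|}{n} < \sum_{i\in F}\mu(A_i) = \mu\Bigl(\bigcup_{i\in F}A_i\Bigr) \leq \mu(A) < \infty,
\end{equation*}
so $|F|< n\mu(A)$. Since this bound is independent of $F$, it follows that $|B_n|\leq n\mu(A)<\infty$.

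The second step is to observe that
\begin{equation*}
\{i \in I : \mu(A_i) > 0\} = \bigcup_{n=1}^{\infty} B_n,
\end{equation*}
which expresses the set of indices of positive measure as a countable union of finite sets, hence at most countable. There is no real obstacle: the only subtle point is making sure we invoke additivity only on finite subcollections (so we do not need any countability of $I$ up front), and this is precisely what the argument above does.
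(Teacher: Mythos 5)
Your proof is correct and takes essentially the same approach as the paper: both arguments pigeonhole on the sets of indices $i$ with $\mu(A_i)>1/n$ and conclude by taking the countable union over $n$. The only cosmetic difference is that you bound $|B_n|$ directly via finite subfamilies, whereas the paper derives a contradiction from a countably infinite subfamily.
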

\begin{proof}
Suppose there exists a countably infinite set $I_n$ such that $\mu(A_i)>1/n$ for $i\in I_n$. Then
\begin{align*}
\infty=\sum_{i\in I_n}\mu(A_i)=\mu\Big(\cup_{i\in I_n}A_i\Big)\leq \mu(A)
\end{align*}
which is a contradiction to the assumption that $\mu(A)<\infty$. The conclusion follows from letting  $n\to\infty$.
\end{proof}

\begin{lem}\label{newremii}
For any $\mu \in \MO$,
$\mu(\partial (\bfS\setminus\bfC^{\delta}))>0$ for at most countably many $\delta>0$.
\end{lem}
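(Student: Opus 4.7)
The plan is to show that $\partial(\bfS\setminus\bfC^\delta)$ is always contained in the level set $L_\delta := \{x : d(x,\bfC) = \delta\}$, and then argue that only countably many of these (disjoint!) level sets can carry positive $\mu$-mass, using the local finiteness of $\mu$ just as in Lemma \ref{lem:uncountableunion}.

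First I would handle the topological inclusion. By Lemma \ref{lem:distance}(i), $x \mapsto d(x,\bfC)$ is continuous, so $\bfC^\delta = \{x : d(x,\bfC) < \delta\}$ is open and $F_\delta := \bfS\setminus\bfC^\delta = \{x : d(x,\bfC) \ge \delta\}$ is closed. The set $\{x : d(x,\bfC) > \delta\}$ is open and contained in $F_\delta$, hence lies in $F_\delta^\circ$, so
\begin{align*}
\partial F_\delta \;=\; F_\delta \setminus F_\delta^\circ \;\subset\; \{x : d(x,\bfC) \ge \delta\} \setminus \{x : d(x,\bfC) > \delta\} \;=\; L_\delta.
\end{align*}
In particular the $L_\delta$, $\delta>0$, are pairwise disjoint Borel sets, and for $\delta \geq 1/n$ we have $L_\delta \subset \bfS \setminus \bfC^{1/n}$.

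Next I would run the counting argument. Fix $n \ge 1$. Since $\mu \in \MO$, $M_n := \mu(\bfS\setminus \bfC^{1/n}) < \infty$. Let
\begin{align*}
\mathcal D_n \;=\; \bigl\{\delta \ge 1/n \,:\, \mu(L_\delta) > 1/n \bigr\}.
\end{align*}
If $\mathcal D_n$ were infinite, pick any countably infinite $\{\delta_k\} \subset \mathcal D_n$; then by disjointness
\begin{align*}
\sum_{k} \mu(L_{\delta_k}) \;=\; \mu\Bigl(\bigcup_k L_{\delta_k}\Bigr) \;\leq\; \mu(\bfS \setminus \bfC^{1/n}) \;=\; M_n,
\end{align*}
contradicting $\mu(L_{\delta_k}) > 1/n$ for all $k$. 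Hence $\mathcal D_n$ is finite for every $n$.

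Finally, any $\delta > 0$ with $\mu(\partial F_\delta) > 0$ satisfies $\mu(L_\delta) > 0$, and then $\delta \in \mathcal D_n$ for any $n$ large enough that $\delta \ge 1/n$ and $\mu(L_\delta) > 1/n$. Therefore
\begin{align*}
\bigl\{\delta>0 : \mu(\partial(\bfS\setminus\bfC^\delta)) > 0\bigr\} \;\subset\; \bigcup_{n=1}^\infty \mathcal D_n,
\end{align*}
a countable union of finite sets, and so is at most countable. There is no real obstacle here; the only subtlety is that in a general metric space the containment $\partial F_\delta \subset L_\delta$ can be strict, but the one-sided inclusion is all that the argument needs.
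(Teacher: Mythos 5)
Your proof is correct and follows essentially the same route as the paper: the boundaries $\partial(\bfS\setminus\bfC^{\delta})$ lie in the pairwise disjoint level sets $\{x: d(x,\bfC)=\delta\}$, and local finiteness of $\mu$ on $\bfS\setminus\bfC^{1/n}$ forces countability (the paper packages the counting step as Lemma \ref{lem:uncountableunion}, which you simply inline with the $1/n$-stratification made explicit). Your observation that only the inclusion $\partial(\bfS\setminus\bfC^{\delta})\subset\{x:d(x,\bfC)=\delta\}$ is needed—rather than the equality asserted in the paper, which can fail in a general metric space—is a correct and slightly more careful reading of the same argument.
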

\begin{proof}
Notice first that { $\partial (\bfS\setminus\bfC^{\delta})=\{x\in\bfS:d(x,\bfC)=\delta\}$ so $\partial (\bfS\setminus\bfC^{\delta_1})\cap \partial (\bfS\setminus\bfC^{\delta_2})=\es$ for $\delta_1\neq\delta_2$ }. The conclusion follows from Lemma \ref{lem:uncountableunion}.
\end{proof}

\subsubsection{Proof of Theorem \ref{portthm}}
We show that (i) $\Rightarrow$ (ii), (ii) $\Rightarrow$ (iii), (iii) $\Rightarrow$ (iv), (iv) $\Rightarrow$ (v),(v) $\Rightarrow$ (vi) and (vi) $\Rightarrow$ (i).

Suppose that (i) holds. Suppose $\mu_n \to \mu$ in $\MO$ and take $f \in\C_{\bfO}$. Given $\vep > 0$ consider the neighborhood $N_{\vep,f}(\mu)=\{\nu: |\int f \ud \nu - \int f \ud \mu| < \vep\}$. By assumption there exists $n_0$ such that $n \geq n_0$ implies $\mu_n \in N_{\vep,f}(\mu)$, i.e.~$|\int f \ud\mu_n - \int f \ud \mu| < \vep$. Hence $\int f \ud \mu_n \to \int f \ud\mu$. 

Suppose that (ii) holds. Take any closed $F$ that is bounded away from $\bfC$. Then there exists $r>0$ such that $F \subset \bfS \setminus \bfC^{r}$. So for all $x \in F$, $d(x,\bfC) \geq r$. So if we define $F^{\vep} = \{x \in \bfS : d(x,F) < \vep \}$, then each $F^{\vep}$ is open, $ F \subset F^{\vep}$ and $F^{\vep} \downarrow F$ as $\vep \downarrow 0$. Also for $\vep < r/2$, we have that for all $x \in F^{\vep}$ $d(x,\bfC) \geq r- r/2 = r/2$, meaning that $F^{\vep} \subset \bfS \setminus \bfC^{r/2}$. For $\vep > 0$,  $\bfS \setminus F^{\vep}$ is closed and {$F \cap (\bfS \setminus F^{\vep}) = \es$}. So for $ 0 < \vep < r/2$, by Lemma \ref{lem:urysohn}, there exists a uniformly continuous function $f$ from $\bfS$ to $[0,1]$ such that $f \equiv 0$ on $\bfS \setminus F^{\vep}$ and $f \equiv 1$ on $F$. Observe that $f \in \C_{\bfO}$ as $F^{\vep} \subset \bfS \setminus \bfC^{r/2}$. So we have
\begin{align*}
\limsup_{n \to \infty} \mu_n(F) \leq \lim_{n \to \infty} \int f \ud \mu_n =  \int f \ud \mu \leq \mu(F^{\vep}).
\end{align*}
As $\vep \downarrow 0$,  $F^{\vep} \downarrow F$ and as $F$ is closed, we have  $\mu(F^{\vep}) \downarrow \mu(F)$. This leads to 
\begin{align*}
\limsup_{n \to \infty} \mu_n(F) \leq  \mu(F).
\end{align*}
Now take any open $G$ bounded away from $\bfC$. Then there exists $r>0$ such that $G \subset \bfS \setminus \bfC^{r}$. So if we define $G_{\vep} = \bfS \setminus \{x \in \bfS \setminus G : d(x,\bfS \setminus G) < \vep \}$, then each $G_{\vep}$ is closed, $ G_{\vep} \subset G$ and {$G_{\vep} \uparrow G$} as $\vep \downarrow 0$. So by Lemma \ref{lem:urysohn}, there exists a uniformly continuous function $f$ from $\bfS$ to $[0,1]$ such that $f \equiv 0$ on $\bfS \setminus G$ and $f \equiv 1$ on $G_{\vep}$. Observe that $f \in \C_{\bfO}$ as $G \subset \bfS \setminus \bfC^{r}$. So we have
\begin{align*}
\liminf_{n \to \infty} \mu_n(G) \geq \lim_{n \to \infty} \int f \ud \mu_n =  \int f \ud \mu \geq \mu(G_{\vep}).
\end{align*}
As $\vep \downarrow 0$,  {$G_{\vep} \uparrow G$} and as $G$ is open, we have  {$\mu(G_{\vep}) \uparrow \mu(G)$}. This leads to 
{
\begin{align*}
\liminf_{n \to \infty} \mu_n(G) \geq  \mu(G).
\end{align*}
}
This completes the proof of (iii).

Suppose that (iii) holds and take $A \in \scrS_{\bfO}$ bounded away from $\bfC$ with $\mu(\partial A)=0$. 
\begin{align*}
  \limsup_{n\to\infty}\mu_n(A) 
  &\leq\limsup_{n\to\infty}\mu_n(A^-)\leq
  \mu(A^-)\\
  &=\mu(A^\circ)
  \leq \liminf_{n\to\infty}\mu_n(A^\circ)
  \leq \liminf_{n\to\infty}\mu_n(A).
\end{align*}
Hence, $\lim_{n\to\infty}\mu_n(A)=\mu(A)$, so that (iv) holds.

Suppose that (iv) holds and take $r>0$ such that $\mu(\partial (\bfS\setminus\bfC^{r}))=0$. By Lemma \ref{newremii}, all but at most countably many $r>0$ satisfy this property. As $\bfS\setminus\bfC^{r}$ is trivially bounded away from $\bfC$, we have that $\mu_n(\bfS\setminus\bfC^{r}) \to \mu(\bfS\setminus\bfC^{r})$. Now any $A \subset \bfS\setminus\bfC^{r}$ is also bounded away from $\bfC$ and as $\bfS\setminus\bfC^{r}$ is closed, $ \partial_{\bfS\setminus\bfC^{r}} A = \partial A$, where the first expression denotes the boundary of $A$ when considered as a subset of $\bfS\setminus\bfC^{r}$. So for any subset $A \subset \bfS\setminus\bfC^{r}$ with $ \mu( \partial_{\bfS\setminus\bfC^{r}} A) = 0$, we have by (iv) that $\mu_n(A) \to \mu(A)$ and hence $\mu^{(r)}_n(A) \to \mu^{(r)}(A)$. The Portmanteau theorem for weak convergence implies $\mu^{(r)}_n \to \mu^{(r)}$ in $\M_b(\bfS\setminus\bfC^r)$. This completes the proof of (v).

Suppose that (v) holds. Since, $\mu_n^{(r)}\to\mu^{(r)}$ in $\M_b(\bfS \setminus \bfC^r)$ for all but at most countably many $r>0$ we can always choose a sequence $\{r_i\}$ with $r_i \downarrow 0$ such that $\mu_n^{(r_i)}\to\mu^{(r_i)}$ in $\M_b(\bfS \setminus \bfC^{r_i})$ for each $i$.

Suppose that (vi) holds. Take $\vep > 0$ and a neighborhood $N_{\vep,f_1,\dots,f_k}(\mu)=\{\nu: |\int f_j\ud\nu-\int f_j\ud\mu|<\vep,j= 1,\dots,k\}$ where each $f_j \in \C_{\bfO}$ for $ j=1,2,\ldots,k$. Let $r > 0$ be  such that $\mu_n^{(r)}\to\mu^{(r)}$ in $\M_b(\bfS \setminus \bfC^{r})$ and each $f_j$ vanishes on $\bfC^{r}$.  Let $n_j$ be an integer such that $n \geq n_i$ implies $|\int f_i \ud \mu_n^{(r)} - \int f_i \ud
\mu^{(r)}| < \vep$. Hence, $n \geq \max(n_1, \dots, n_k)$ implies that  $|\int f_i \ud \mu_n^{(r)} - \int f_i \ud
\mu^{(r)}| < \vep$ for all $j=1,2,\ldots,k$. As each $f_j$ vanishes outside $\bfC^{r}$, we also have that  $|\int f_i \ud \mu_n - \int f_i \ud
\mu| < \vep$ for all $j=1,2,\ldots,k$. So $\mu_n \in N_{\vep, f_1, \dots, f_k}(\mu)$. Hence $\mu_n \to \mu$ in $\MO$. 
\qed

\subsubsection{Proof of Theorem \ref{M0metrizable}.}\label{subsubsec:M0metrizable}
The proof consists of minor modifications of arguments that can be found 
in \cite{daley:vere-jones:1988}, pp.~628-630. Here we change from $r$ to $1/r$. For the
sake of completeness we have included a full proof. 

We show that (i) $\mu_n\to\mu$ in $\MO$ if and only if $d_{\MO}(\mu_n,\mu)\to 0$, and (ii) $(\MO,d_{\MO})$ is separable and complete.

(i) Suppose that $d_{\MO}(\mu_n,\mu)\to 0$. The integral expression in \eqref{M0metric} can be written $d_{\MO}(\mu_n,\mu)=\int_0^\infty e^{-r}g_n(r)\ud r$, so that for each $n$, $g_n(r)$ decreases with $r$ and is bounded by $1$. Helly's selection theorem (p.~336 in \cite{billingsley:1995}), applied to $1-g_n$, implies that there exists a subsequence $\{n'\}$ and a nonincreasing function $g$ such that $g_{n'}(r) \to g(r)$ for all continuity points of $g$. By dominated convergence, $\int_0^{\infty}e^{-r}g(r)\ud r=0$ and since $g$ is monotone this implies that $g(r)=0$ for all finite $r>0$. Since this holds for all convergent subsequences $\{g_{n'}(r)\}$, it follows that $g_n(r)\to 0$ for all continuity points $r$ of $g$, and hence, for such $r$, $p_r(\mu_n^{(r)},\mu^{(r)})\to 0$ as $n\to\infty$. By Theorem \ref{portthm} (vi), $\mu_n\to\mu$ in $\MO$.

Suppose that $\mu_n\to\mu$ in $\MO$. Theorem \ref{portthm} (v) implies that $\mu_n^{(r)}\to\mu^{(r)}$ in $\M_b(\bfO\setminus\bfC^r)$ for all but at most countably many $r>0$. Hence, for such $r$,
$p_r(\mu^{(r)}_n,\mu^{(r)})[1+p_r(\mu^{(r)}_n,\mu^{(r)})]^{-1}\to 0$, which by the dominated convergence theorem implies that $d_{\MO}(\mu_n,\mu)\to 0$.

(ii) Separability: For $r>0$ let $D_r$ be a countable dense set in $\M_b(\bfS\setminus\bfC^r)$ with the weak topology. Let $D$ be the union of $D_r$ for rational $r > 0$. Then $D$ is countable. Let us show $D$ is dense in $\MO$. Given $\vep > 0$ and $\mu \in \MO$ pick $r'>0$ such that $\int_0^{r'} e^{-r}\ud r < \vep / 2$. Take $\mu_{r'} \in D_{r'}$ such that $p_{r'}(\mu_{r'}, \mu^{(r')}) < \vep/2$. Then $p_r(\mu_{r'}^{(r)}, \mu^{(r)}) < \vep/2$ for all $r > r'$. In particular, $d_{\MO}(\mu_{r'}, \mu) < \vep$. 

Completeness: Let $\{\mu_n\}$ be a Cauchy sequence for $d_{\MO}$. Then $\{\mu_n^{(r)}\}$ is a Cauchy sequence for $p_r$ for all but at most countably many $r > 0$. Since $\bfS$ is separable and complete, its closed subspace $\bfS\setminus\bfC^r$ is separable and complete. Therefore, 
$\M_b(\bfS\setminus\bfC^r)$ is complete which implies that $\{\mu_n^{(r)}\}$ has a limit $\mu_r$. These limits are consistent in the sense that $\mu_{r'}^{(r)} = \mu_{r}$ for $r' < r$.  On $\scrS_{\bfO}$ set $\mu(A) = \lim_{r \to 0} \mu_r(A \cap \bfS\setminus\bfC^r)$. Then $\mu$ is a measure. Clearly, $\mu \geq 0$ and $\mu(\es) = 0$. Moreover, $\mu$ is countably additive: for disjoint $A_n\in \scrS_{\bfO}$ the monotone convergence theorem implies that 
\begin{align*}
  \mu(\cup_n A_n) 
  &=\lim_{r\to 0}\mu_r(\cup_nA_n\cap[\bfS\setminus\bfC^r])\\ 
  &=\lim_{r \to 0} \sum_n \mu_r(A_n \cap [\bfS\setminus\bfC^r]) 
  =\sum_n \mu(A_n).
\end{align*}
\qed 

\subsubsection{Proof of Theorem \ref{mapthm}.}\label{subsubsec:mapthm}
Firstly,  $D_h \in \scrS_{\bfO}$ \cite[p.~243]{billingsley:1999}.
Take $A' \in \scrS_{\bfO'}$ bounded away from $\bfC'$ with $\mu {\circ }h^{-1}(\partial A')=0$.
Since $\partial h^{-1}(A') \subset h^{-1}(\partial A') \cup D_h$
(see e.g.~(A2.3.2) in \cite{daley:vere-jones:1988}), we have
$\mu(\partial h^{-1}(A'))\leq \mu h^{-1}(\partial A') + \mu(D_h)=0$.
Since $\mu_n\to\mu$ in $\MO$, $\mu(\partial h^{-1}(A')) = 0$, and $h^{-1}(A')$ is bounded away from $\bfC$, it follows from Theorem \ref{portthm} (iv) that 
$\mu_n h^{-1}(A)\to\mu h^{-1}(A)$. Hence, $\mu_nh^{-1}\to\mu h^{-1}$ in $\M_{\bfO'}$. 
\qed

\subsubsection{Proof of Corollary \ref{cor:variant1}.}
Take $A'\subset \bfS'\setminus \bfC'$ such that $d'(A',\bfC')>0$. 
We claim this implies $d(h^{-1}(A'), \bfC)>0$. Otherwise, if $d(h^{-1}(A'),\bfC)=0$, there exist $x_n \in h^{-1}(A')$ and $y_n \in \bfC$ such that $d(x_n,y_n)\to 0.$ Then $h(x_n)\in A',\,h(y_n) \in h(\bfC)=\bfC' $ and if $h$ is uniformly continuous, then $d'(h(x_n),h(y_n))\to 0$ so that $d'(A', \bfC')=0$, a
 contradiction.
\qed

\subsubsection{Proof of Corollary \ref{cor:2ndVariant}.}\label{subsubsec:2ndVariant}
The proof of Corollary \ref{cor:variant1} shows that it suffices if either $\{x_n\}$ or $\{y_n\}$ has a limit point. In the former case, if $x_{n'} \to x$ for some subsequence $n' \to \infty$, then $ d(x,y_{n'})\to 0 $ and $y_{n'}\to x  \in \bfC $ and $h(y_{n'})\to h(x) $ so {$d'(A',\bfC')=0$} again
  giving a contradiction. Note if $\bfS$ is compact than $\{x_n\}$ has  a limit point. On the other hand, if $\{y_n\}$ has a limit point   then there exists an infinite subsequence $\{n'\} $ and  $y_{n'}\to y\in C$ so that $d(x_{n'}, y)\to 0$. Thus if $h$ is continuous, $h(x_{n'}) \to  h(y) \in h(\bfC)=\bfC'$ which contradicts {$d'(A',\bfC')>0$}.  Note if $\bfC $ is compact, then $\{y_n\}$ has a limit point and in particular if $\bfC=\{s_0\}$. Thus we have the second variant.
\qed

\subsubsection{Proof of Theorem \ref{rc0}.}
\label{subsubsec:rc0}
Suppose $M \subset \MO$ is relatively compact. Let
$\{\mu_n\}$ be a subsequence in $M$. Then there exists a
convergent subsequence $\mu_{n_k} \to \mu$ for some $\mu \in M^-$. By Theorem \ref{portthm} (v), there exists a sequence
$\{r_i\}$ with $r_i \downarrow 0$ such that
$\mu_{n_k}^{(r_i)}\to\mu^{(r_i)}$ in
$\M_b(\bfS \setminus\bfC^{r_i})$. Hence, $M^{(r_i)}$ is
relatively compact in $\M_b(\bfS\setminus\bfC^{r_i})$ for each such
$r_i$.    

Conversely, suppose there exists a sequence $\{r_i\}$ with $r_i\downarrow 0$ such that $M^{(r_i)} \subset \M_b(\bfS\setminus\bfC^{r_i})$ is relatively compact for each $i$, and let $\{\mu_n\}$ be a sequence of elements in $M$. We use a diagonal argument to find a convergent subsequence. 
Since $M^{(r_1)}$ is relatively compact there exists a subsequence $\{\mu_{n_1(k)}\}$ of $\{\mu_n\}$ such that $\mu_{n_1(k)}^{(r_1)}$ converges to some $\mu_{r_1}$ in $\M_b(\bfS\setminus\bfC^{r_1})$. Similarly since $M^{(r_2)}$ is relatively compact and $\{\mu_{n_1(k)}\} \subset M$ there exists a subsequence $\{\mu_{n_2(k)}\}$ of $\{\mu_{n_1(k)}\}$ such that $\mu_{n_2(k)}^{(r_2)}$ converges to some $\mu_{r_2}$ in $\M_b(\bfS\setminus\bfC^{r_2})$. Continuing like this; for each $i \geq 3$ let $n_i(k)$ be a subsequence of $n_{i-1}(k)$ such that $\mu_{n_i(k)}^{(r_i)}$ converges to some $\mu_{r_i}$ in $\M_b(\bfS\setminus\bfC^{r_i})$. Then the diagonal sequence $\{\mu_{n_k(k)}\}$ satisfies $\mu_{n_k(k)}^{(r_i)}\to \mu_{r_i}$ in $\M_b(\bfS\setminus\bfC^{r_i})$ for each $i \geq 1$. 
Take $f \in \C_{\bfO}$. There exists some $i_0 \geq 1$ such that  $f$ vanishes on $\bfS\setminus\bfC^{r_i}$ for each $i \geq i_0$. In particular $f \in \C_b(\bfS\setminus\bfC^{r_i})$ for each  $i \geq i_0$ and 
\begin{align*}
  \int f \ud \mu_{r_i} = \lim_{k} \int f \ud \mu_{n_k(k)}^{(r_i)} =
  \lim_{k} \int f \ud \mu_{n_k(k)}^{(r_{i_0})} = \int f \ud \mu_{r_{i_0}}.
\end{align*}
Hence, we can define $\mu': \C_{\bfO}\to [0,\infty]$ by $\mu'(f)=\lim_{i \to\infty}\int f\ud\mu_{r_i}$. This $\mu'$ induces a measure $\mu$ in $\MO$. Indeed, for $A \in \scrS_{\bfO}$ we can find a sequence $f_n\in\C_{\bfO}$ such that $0 \leq f_n \uparrow I_A$ and set $\mu(A)=\lim_n\mu'(f_n)$. If $A \in\bfS\setminus\bfC^{r}$ for some $r> 0$, then there exists $f_n\in\C_{\bfO}$ such that $f_n \downarrow I_A$ and hence $\mu(A) \leq \mu'(f_n) < \infty$. Thus, $\mu$ is finite on sets $A\in\bfS\setminus\bfC^{r}$ for some $r>0$. To show that $\mu$ is countably additive, let $A_1, A_2, \dots$ be disjoint sets in $\scrS_{\bfO}$ and $0 \leq f_{nk} \uparrow I_{A_k}$ for each $k$. Then $\sum_k f_{nk}\uparrow I_{\cup_k A_k}$ and, by Fubini's theorem and the monotone convergence theorem, it holds that
\begin{align*}
  \mu(\cup_k A_k) = \lim_n \mu'\Big(\sum_k f_{nk}\Big) = \sum_k  \lim_n
  \mu'(f_{nk}) = \sum_k \mu(A_k).
\end{align*}
By construction $\int f \ud \mu = \mu'(f)$ for each $f \in\C_{\bfO}$. Hence, $\int f\ud\mu_{n_k(k)}\to \int f \ud \mu$ for each $f \in \C_{\bfO}$, and we conclude that $M$ is relatively compact in $\MO$. 
\qed

\subsubsection{Proof of Theorem \ref{rc}.}\label{subsubsec:rc}
Suppose $M \subset \MO$ is relatively compact. By Theorem \ref{rc0},
there exists a sequence $\{r_i\}$ with $r_i \downarrow 0$ such that
$M^{(r_i)} \subset \M_b(\bfS \setminus \bfC^{r_i})$ is relatively compact
for each $r_i$. Prohorov's theorem (Theorem A2.4.1 in \cite{daley:vere-jones:1988})
implies that \eqref{rc1} and \eqref{rc2} hold. 

Conversely, suppose there exists a sequence $\{r_i\}$ with $r_i \downarrow 0$ 
such that \eqref{rc1} and \eqref{rc2} hold. Then, by
Prohorov's theorem, $M^{(r_i)} \subset \M_b(\bfS \setminus \bfC^{r_i})$ is
relatively compact for each $i$. By Theorem \ref{rc0}, 
$M \subset \MO$ is relatively compact.
\qed

\section{Regularly varying sequences of measures}\label{secrv}
\subsection{Scaling}\label{subsec:scaling}
The usual notion of regular variation involves comparisons along a ray
and requires a concept of scaling or multiplication.  We approach the
scaling idea in a general complete, separable metric space
$\mathbb{S}$ by postulating what is required for a pleasing theory.
Given any real number $\lambda>0$  and any $x\in \bfS$, we assume
there exists a mapping
$(\lambda,x)\mapsto \lambda x$ from $(0,\infty) \times \bfS$ into
$\bfS$ 
satisfying:
\begin{itemize}
\item[(A1)]\label{A1} the mapping $(\lambda,x)\mapsto \lambda x$ is continuous,
\item[(A2)]\label{A2} $1x=x$ and $\lambda_1(\lambda_2 x)=(\lambda_1\lambda_2)x$.
\end{itemize}
Assumptions (A1) and (A2) allow definition of a cone $\bfC \subset
\mathbb{S}$ as a
 set satisfying $x\in \bfC$ implies $\lambda x \in \bfC$ for any
$\lambda>0$. 
For this section, fix a closed cone $\bfC \subset \bfS$
and then $\bfO:=\bfS\setminus \bfC$ is 
 an open cone. We require that 
\begin{itemize}
\item[(A3)]\label{A3} $d(x,\bfC) < d(\lambda x,\bfC)$ if $\lambda>1$ and $x\in\bfO$.
\end{itemize}

\subsubsection{Examples to fix ideas:}\label{subsubsec:eg3} To
emphasize the flexibility allowed by our assumptions, consider the
following  circumstances {all of which satisfy (A1)--(A3).}
\begin{enumerate}
\item\label{4.1}
Let $\bfS=\R^2$ and $\bfC=(\{0\}\times\R)\cup(\R\times\{0\})$ and for
$\gamma_1>0,\, \gamma_2>0$
define $(\lambda,(x_1,x_2))\mapsto (\lambda^{1/\gamma_1}x_1,\lambda^{1/\gamma_2}x_2)$.

\item\label{4.2}
Set $\bfS=\R^2$ and  $\bfC=\R\times\{0\}$.
Define $(\lambda,(x_1,x_2))\mapsto (x_1,\lambda x_2)$.

\item\label{item:polar} Set $\bfS=[0,\infty)\times\{x\in\R_+^2:\|x\|=1\}$ 
and $\bfC =\{0\}\times \{x\in\R_+^2:\|x\|=1\}$. For $\lambda>0$, 
define $(\lambda, (r,a))\mapsto (\lambda r,a).$
\end{enumerate}

\subsection{Regular variation}\label{sec:regvar}
Recall from e.g.~\cite{bingham:goldie:teugels:1987} that a positive
measurable function $c$ defined on $(0,\infty)$ is regularly varying
with index $\rho \in \R$ if $\lim_{t\to\infty}c(\lambda
t)/c(t)=\lambda^{\rho}$ for all $\lambda > 0$. Similarly, a sequence
$\{c_n\}_{n\geq 1}$ of positive numbers is regularly varying with
index $\rho \in \R$ if $\lim_{n\to\infty}c_{[\lambda
  n]}/c_n=\lambda^{\rho}$ for all $\lambda > 0$.  Here $[\lambda n]$
denotes the integer part of $\lambda n$.  

\begin{dfn}\label{rvdefseq}
A sequence $\{\nu_n\}_{n \geq 1}$ in  $\MO$ is regularly varying 
if there exists an increasing sequence $\{c_n\}_{n \geq 1}$ of positive numbers which is regularly varying and a nonzero $\mu \in \MO$ such that $c_n\nu_n \to \mu$ in $\MO$ as $n\to\infty$.
\end{dfn}

The choice of terminology is motivated by the fact that
$\{\nu_n(A)\}_{n\geq 1}$ is a regularly varying sequence for each set
$A \in \scrS_{\bfO}$ bounded away from $\bfC$, $\mu(\partial A)=0$ and
$\mu(A)>0$.  We will now define regular variation for a single measure
in $\MO$.  

\begin{dfn}\label{rvdefmeas}
A measure $\nu\in\MO$ is regularly varying if the sequence $\{\nu(n\cdot)\}_{n\geq 1}$ in $\MO$ is regularly varying.
\end{dfn}

There are many equivalent formulations of regular variation
for a measure $\nu \in \MO$. Some are  natural for statistical
inference. Consider the following statements.  

\begin{itemize}
\item[(i)]
  There exist a nonzero $\mu \in \MO$ and a regularly varying sequence $\{c_n\}_{n\geq 1}$ of positive numbers such that $c_n\nu(n\cdot)\to\mu(\cdot)$ in $\MO$ as $n\to\infty$.
\item[(ii)] 
  There exist a nonzero $\mu \in \MO$ and a regularly varying function $c$ such that 
  $c(t)\nu(t\cdot)\to\mu(\cdot)$ in $\MO$ as $t\to\infty$.
\item[(iii)]
  There exist a nonzero $\mu \in \MO$ and a set $E \in \scrS_{\bfO}$ bounded away from $\bfC$  such that $\nu(tE)^{-1}\nu(t\cdot)\to\mu(\cdot)$ in $\MO$ as $t\to\infty$.
\item[(iv)]
  There exist a nonzero $\mu \in \MO$ and an increasing sequence $\{b_n\}_{n\geq 1}$ of positive numbers such that $n\nu(b_n\cdot)\to\mu(\cdot)$ in $\MO$ as $n\to\infty$.
  \item[(v)]
  There exist a nonzero $\mu \in \MO$ and an increasing function $b$ of such that $t\nu(b(t)\cdot)\to\mu(\cdot)$ in $\MO$ as $t\to\infty$.
\end{itemize}

\begin{thm}\label{thmrvnt}
The statements (i)-(v) are equivalent and each statement implies that the limit measure $\mu$ has the homogeneity property
\begin{equation}\label{eqn:scaling}
\mu(\lambda A)=\lambda^{-\alpha}\mu(A)
\end{equation}
for some $\alpha\geq 0$ and all $A \in \scrS_{\bfO}$ and $\lambda > 0$.
\end{thm}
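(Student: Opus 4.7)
The easy equivalences are (i)$\Leftrightarrow$(ii) and (iv)$\Leftrightarrow$(v) (interpolation between a regularly varying sequence $\{c_n\}$ and the step function $c(t):=c_{[t]}$, and similarly for $b$), together with (ii)$\Rightarrow$(iii): pick $E\in\scrS_{\bfO}$ bounded away from $\bfC$ with $\mu(\partial E)=0$ and $0<\mu(E)<\infty$ (such $E$ exists because $\mu\neq 0$ is finite on sets at positive distance from $\bfC$); then Theorem~\ref{portthm}(iv) gives $c(t)\nu(tE)\to\mu(E)$, so $\nu(tE)^{-1}\nu(t\cdot)\to\mu(E)^{-1}\mu(\cdot)$ in $\MO$. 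The substantive work is the homogeneity of $\mu$, plus the implications (iii)$\Rightarrow$(ii) and (ii)$\Leftrightarrow$(v).

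\textbf{Central scaling argument.} Assuming (ii), fix $\lambda>0$ and take a compact set $A\subset\bfO$ bounded away from $\bfC$ with $\mu(A)>0$ and $\mu(\partial A)=0$. By (A1) the scaling $x\mapsto\lambda x$ is a homeomorphism of $\bfO$, so $\lambda A$ is compact (hence bounded away from $\bfC$) and $\partial(\lambda A)=\lambda(\partial A)$ is a $\mu$-null set. Substituting $s=\lambda t$ in (ii) and separately testing (ii) on $\lambda A$ via Theorem~\ref{portthm}(iv) gives
\begin{equation*}
c(\lambda t)\,\nu(\lambda t A)\to\mu(A),\qquad c(t)\,\nu(\lambda t A)\to\mu(\lambda A),
\end{equation*}
whose quotient reads $c(\lambda t)/c(t)\to\mu(A)/\mu(\lambda A)=:k(\lambda)$. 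Since the left-hand side does not depend on $A$, neither does $k$; the classical characterization of regularly varying functions then produces $\alpha\in\R$ with $k(\lambda)=\lambda^{\alpha}$, yielding $\mu(\lambda A)=\lambda^{-\alpha}\mu(A)$ on compact $\mu$-continuity sets. This identity extends to all $A\in\scrS_{\bfO}$ by inner approximation followed by a monotone class argument. Because $\{c_n\}$ is increasing by Definition~\ref{rvdefseq}, the index $\alpha$ must be nonnegative.

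\textbf{Remaining implications and main obstacle.} For (iii)$\Rightarrow$(ii), apply the convergence in (iii) to $\lambda E$ for $\lambda$ in the co-countable set where $\mu(\partial(\lambda E))=0$ (cf.~Lemma~\ref{newremii}) to obtain $\nu(\lambda tE)/\nu(tE)\to\mu(\lambda E)/\mu(E)$; the characterization of regular variation then promotes $t\mapsto\nu(tE)$ to a regularly varying function, making $c(t):=1/\nu(tE)$ an admissible normalization for (ii). The equivalence (ii)$\Leftrightarrow$(v) is then standard asymptotic inversion of the regularly varying normalization, with the $\alpha=0$ case handled by truncation. The main obstacle is the central scaling step: one must check that $A\mapsto\lambda A$ preserves both $\mu$-continuity and boundedness away from $\bfC$, which is precisely what assumptions (A1)--(A3) are designed to secure; the remaining manipulations are classical bookkeeping with regularly varying functions.
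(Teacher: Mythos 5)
Your outline of the easy implications ((i)$\Leftrightarrow$(ii), (iv)$\Leftrightarrow$(v) by interpolation, (ii)$\Rightarrow$(iii) by normalizing with $\nu(tE)$, and (iii)$\Rightarrow$(v) by asymptotic inversion of $c(t)=1/\nu(tE)$) matches the paper. The problem is in your central scaling argument. You take a compact $\mu$-continuity set $A$ and assert that $\partial(\lambda A)=\lambda(\partial A)$ ``is a $\mu$-null set.'' That does not follow: $\mu(\partial A)=0$ gives no information about $\mu(\lambda\,\partial A)$ until the homogeneity of $\mu$ is known, which is exactly what you are trying to prove. The same unsupported claim reappears in your (iii)$\Rightarrow$(ii) step, where you invoke ``the co-countable set where $\mu(\partial(\lambda E))=0$ (cf.~Lemma~\ref{newremii}).'' Lemma~\ref{newremii} applies to the sets $\bfS\setminus\bfC^{\delta}$ precisely because their boundaries are the pairwise disjoint level sets $\{x:d(x,\bfC)=\delta\}$, so Lemma~\ref{lem:uncountableunion} applies. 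For a general compact $A$ the sets $\lambda\,\partial A$ are \emph{not} disjoint across $\lambda$, and the set of bad $\lambda$ can fail to be countable. Concretely, in $\bfO=(0,\infty)$ with ordinary scaling, let $K\subset[1,2]$ be a fat Cantor set and $\mu$ Lebesgue measure restricted to $[1,2]\setminus K$; a compact $A$ with $\partial A\supset K$ can satisfy $\mu(\partial A)=0$ while $\mu(\lambda\,\partial A)>0$ for a set of $\lambda$ of positive Lebesgue measure. So your quotient $c(\lambda t)/c(t)\to\mu(A)/\mu(\lambda A)$ is only established for $\lambda$ in a set you have not shown to be large enough even for the Bingham--Goldie--Teugels characterization (which needs a set of positive measure).

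This is not a cosmetic issue; it is the point the paper's proof is built around. The paper first proves homogeneity from (iii) using the specially constructed sets $E'=S(B_{x,\delta})$ of Lemma~\ref{lem:goodsets}, which are designed (via the function $\gamma(r)=d(\partial rB_{x,\delta},\bfC)$ and the disjointness of the level sets of $d(\cdot,\bfC)$) so that $\mu(\partial(rE'))=0$ for $r$ in a set of positive measure containing $1$; Theorem 1.4.1 of Bingham--Goldie--Teugels then yields regular variation of $t\mapsto\nu(tE')$ and the exponent $\alpha$, and the identity is propagated to all of $\scrS_{\bfO}$ through the convergence-determining class $\scrA_{\mu}$ of scaling-invariant continuity sets (Lemma~\ref{lem:convdetclassSA}), which is also what makes the interpolation steps (i)$\Rightarrow$(ii) and (iv)$\Rightarrow$(v) rigorous. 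A secondary remark: you deduce $\alpha\geq 0$ from monotonicity of $\{c_n\}$, but your argument starts from (ii), where $c$ is only assumed regularly varying; the paper instead gets $\alpha\geq 0$ from (A3), since $S(E')=E'$ forces $\lambda E'\subseteq E'$ and hence $\nu(t\lambda E')/\nu(tE')\leq 1$ for $\lambda\geq 1$. To repair your proof you must either restrict to sets of the form $S(B_{x,\delta})$ (i.e., reprove Lemma~\ref{lem:goodsets}) or supply some other mechanism guaranteeing enough $\lambda$ with $\mu(\partial(\lambda A))=0$.
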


Notice that a regularly varying measure does not correspond to a
single scaling parameter $\alpha$
unless the multiplication operation with scalars is
fixed.

\subsection{More  examples}\label{subsec:egsMore}
We amplify the discussion of Section \ref{subsubsec:eg3}.

\subsubsection{Continuation of Section \ref{subsubsec:eg3}.}\label{subsubsec:continue}

\begin{exm}\label{eg:3_1}
Consider again the context of Section \ref{subsubsec:eg3}, item
\ref{4.1} where $\bfS=\R^2$ and let $\bfC=(\{0\}\times\R)\cup(\R\times\{0\})$.
Consider two independent Pareto random variables: Let $X_1$ be
Pa$(\gamma_1)$ and $X_2$ be Pa$(\gamma_2)$.
Define $(\lambda,(x_1,x_2))\mapsto (\lambda^{1/\gamma_1}x_1,\lambda^{1/\gamma_2}x_2)$.
For $a,b>0$
\begin{align*} 
t^2 P\bigl[t^{-1} (X_1,X_2)\in (a,\infty)\times (b,\infty)\bigr]
&=t\Prob \bigl[X_1>t^{1/\gamma_1}a]t\Prob[X_2>t^{1/\gamma_2}b \bigr]\\
&=a^{-\gamma_1}b^{-\gamma_2}.
\end{align*} 
According to our definition, the distribution of $(X_1,X_2)$ is
regularly varying on $\mathbb{S} \setminus \bfC$.
The limit measure therefore has the scaling property: For $\lambda >0$,
\begin{align*}
\mu \bigl(\lambda[(a,\infty)\times (b,\infty)] \bigr)&=\mu
\bigl((\lambda^{1/\gamma_1}a,\infty)\times
(\lambda^{1/\gamma_1}b,\infty)\bigr)\\
&=\lambda^{-2}a^{-\gamma_1}b^{-\gamma_2}
=\lambda^{-2}\mu((a,\infty)\times (b,\infty)).
\end{align*}
\end{exm}

\begin{exm}\label{eg:3_2}
Recall Section \ref{subsubsec:eg3}, item \ref{4.2} where
 $\bfS=\R^2$ and  $\bfC=\R\times\{0\}$ with  $(\lambda,(x_1,x_2))\mapsto (x_1,\lambda x_2)$.
Suppose $X_1,X_2$ are independent with  $X_1$ being N$(0,1)$ and
$X_2$ being  Pa$(\gamma)$. For $a,b>0$
\begin{align*}
t^{\gamma}\Prob\bigl[ t^{-1}(X_1,X_2)\in (a,\infty)\times (b,\infty)\bigr]
=\Prob [X_1>a] t^{\gamma}\Prob[X_2>tb]
=(1-\Phi(a))b^{-\gamma}, 
\end{align*} 
implying that the distribution of $(X_1,X_2)$ is regularly varying.
For $\lambda >0$, the limit measure  has the scaling property,
\begin{align*}
\mu(\lambda[(a,\infty) &\times (b,\infty)])=\mu((a,\infty)\times (\lambda b,\infty))\\
&=\lambda^{-\gamma}(1-\Phi(a))b^{-\gamma}
=\lambda^{-\gamma}\mu((a,\infty)\times (b,\infty)).
\end{align*}
\end{exm}

\subsubsection{Examples using the mapping Theorem \ref{mapthm}.}\label{subsubsec:applyMapping}

\begin{exm}[Cf. \cite{maulik:resnick:rootzen:2002}, Theorem 2.1, page 677]
\label{ex:mappingthm}
Suppose $\bfS=[0,\infty)^2,\, \bfC=[0,\infty)\times \{0\}$ so that 
$$\mathbb{O}=\bfS\setminus \bfC =[0,\infty)^2 \setminus
[0,\infty)\times \{0\}=[0,\infty)\times (0,\infty)=:\bfD_\sqcap.$$
Define $h:\bfD_\sqcap\mapsto \bfD_\sqcap$ by $h(x,y)=(xy,y).$ If
$A'\subset \bfD_\sqcap$ and $d(A',[0,\infty)\times \{0\})>0, $
then $\inf\{y: (x,y)\in A', \text{ for some }x\}>0$ and so
$h^{-1}(A')=\{(x,y): h(x,y)\in A'\}=\{(x,y): (xy,y)\in A'\}$ is also
at positive distance from $[0,\infty)\times \{0\}.$ So the hypotheses
of Theorem \ref{mapthm} are satisfied with $h$ and if 
${\mu_t \to \mu}$ in $\mathbb{M}_{\mathbb{D}_\sqcap}$
then it follows that ${\mu_t \circ h^{-1} \to \mu\circ h^{-1}}$ 
in $\mathbb{M}_{\mathbb{D}_\sqcap}$. In particular,
suppose for a random vector $(X,Y)$ and scaling function $b(t) \to \infty$,
\begin{align}\label{eqn:regVarD_sqcap}
tP\Bigl[\Bigl(X,\frac{Y}{b(t)} \Bigr) \in \cdot \Bigr] \to {\mu(\cdot)}
\qquad \text{ in $\mathbb{M}_{\mathbb{D}_\sqcap}$}.
\end{align}
This is regular variation of the
distribution of $(X,Y)$ on ${\mathbb{D}_\sqcap}$ with the scaling
function defined as $(\lambda, (x,y))\mapsto (x,\lambda y)$.
The mapping Theorem \ref{mapthm} gives
\begin{align}\label{eqn:regVar2D_sqcap}
tP\Bigl[\Bigl(\frac{XY}{b(t)},\frac{Y}{b(t)} \Bigr) \in \cdot \Bigr]
\to \mu' (\cdot)
\qquad \text{ in $\mathbb{M}_{\mathbb{D}_\sqcap}$}.
\end{align}
where $\mu'=\mu \circ h^{-1}$, which is regular variation with
respect to the traditional scaling $(\lambda, (x,y))\mapsto (\lambda x,\lambda y).$

Conversely, define
$g:\bfD_\sqcap\mapsto \bfD_\sqcap$ by $g(x,y)=(x/y,y).$ One observes
$g$ is continuous and obeys the bounded away condition and so 
${\mu_t \to \mu}$ in $\mathbb{M}_{\mathbb{D}_\sqcap}$ implies
${\mu_t \circ h^{-1} \to \mu\circ h^{-1}}$ in $\mathbb{M}_{\mathbb{D}_\sqcap}$.

The summary is that \eqref{eqn:regVarD_sqcap} and
\eqref{eqn:regVar2D_sqcap} are equivalent.
\end{exm}

\begin{exm}[Polar coordinates]\label{eg:polar}
Set 
\begin{align*}
&\bfS=[0,\infty)^2,\;\bfC=\{0\},\;
\bfO=[0,\infty)^2\setminus \{0\}\\
\intertext{with scaling function $(\lambda, x)=(\lambda,
  (x_1,x_2))\mapsto \lambda x=  (\lambda x_1,\lambda x_2).$ For
some choice of norm $x\mapsto \|x\|$ define $\aleph=\{x \in
\bfS: \|x\|=1\}.$ Also define}
&\bfS'=[0,\infty) \times \aleph,\;\bfC'=\{0\} \times \aleph,\;
\bfO'=(0,\infty)\times \aleph,
\end{align*}
and scaling operation on $\bfO'$ is $\bigl(\lambda,
(r,a)\bigr)\mapsto (\lambda r,a).$ The map
$$h(x)=\bigl(\|x\|, x/\|x\|\bigr)$$
from $\bfO \mapsto \bfO' $ is continuous. Let
$d$ and $d'$ be the 
 the metrics on
$\bfS$ and  $\bfS'$. 
Suppose $X$ has a regularly varying distribution on $\bfO$ so that
for some $b(t)\to\infty$,
\begin{equation}\label{eqn:regVarCartesian}
tP\bigl[X/b(t) \in \cdot \,\bigr] \to \mu(\cdot)
\end{equation}
in $\MO$ for some limit measure $\mu$. We show
$h(X)=:(R,\Theta)$ has a regularly varying 
distribution on $\bfO'$. We
apply Theorem \ref{mapthm} so suppose $A'\subset \bfO' $  satisfies
$d'(A', \{0\}\times \aleph)>0,$ that is, $A'$ is bounded away from the
deleted portion of $\bfS'$. Then
$\inf\{r>0: (r,a) \in A'\}=\delta>0$ and $h^{-1}(A')=\{x \in \bfO:
\bigl( \|x\|, x/\|x\|\bigr) \in A'\}$ satisfies $\inf\{\|x\|:x \in h^{-1}(A')\}=\delta'>0$. So the hypotheses of Theorem \ref{mapthm} are satisfied and allow the conclusion that 
\begin{align}\label{eqn:regVarPolar}
 tP\bigl[ \bigl(\frac{R}{b(t)},\Theta \bigr) \in \cdot \,\bigr] 
\to {\mu \circ h^{-1}(\cdot)} \qquad \text{ in }\mathbb{M}_{\bfO'}.
\end{align}

Conversely, given regular variation on $\bfO'$ as in
\eqref{eqn:regVarPolar}, define
$g: \bfO'\mapsto \bfO$ by $g(r,a)=r a$. Mimic the verification
above to conclude \eqref{eqn:regVarPolar} implies \eqref{eqn:regVarCartesian}.
\end{exm}

\begin{exm} {Examples \ref{ex:mappingthm} and \ref{eg:polar}
    typify the following paradigm.}
Consider two triples $(\bfS,\bfC,\bfO)$ and $(\bfS',\bfC',\bfO')$, and
a {homeomorphism }
 $h:\bfO\to\bfO'$ with the property that $h^{-1}(A')$ is bounded away from $\bfC$ if $A'$ is bounded away from $\bfC'$.
The multiplication by a scalar $(\lambda,x)\mapsto \lambda x$ in $\bfO$ gives rise to the multiplication by a scalar $(\lambda',x')\mapsto \lambda' x':=h(\lambda' h^{-1}(x'))$ in $\bfO'$. Notice that $(\lambda',x')\mapsto \lambda' x'$ is continuous, $1x'=x'$, and $\lambda_1'(\lambda_2'(x'))=(\lambda_1'\lambda_2')x'$. We also need to check that $d'(\lambda'x',\bfC')>d'(x',\bfC')$ if $\lambda'>1$.
\end{exm}

\subsection{Proofs}
\subsubsection{Preliminaries.}\label{subsubsec:prelim3}

For $A\in\scrS_{\bfO}$, write $S(A)=\{\lambda x: x\in A, \lambda \geq 1\}$. 

\begin{lem}\label{lem:goodsets} 
Let $\mu \in \MO$ be nonzero. There exists $x\in\bfO$ and $\delta>0$ such that $S(B_{x,\delta})$ is bounded away from $\bfC$, $\mu(S(B_{x,\delta}))>0$, and $\mu(\partial rS(B_{x,\delta}))=0$ for $r\geq 1$ in some set of positive measure containing $1$.
\end{lem}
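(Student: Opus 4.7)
The plan is to locate a ball $B_{x,\delta}$ bounded away from $\bfC$ that carries positive $\mu$-mass, exploit (A3) to inflate it to $S(B_{x,\delta})$ while preserving those properties, and then apply Lemma~\ref{lem:uncountableunion} twice --- once in the variable $r\ge 1$ to kill $\mu(\partial(rS(B_{x,\delta})))$ for all but countably many $r$, and once in the variable $\delta$ to force the value $r=1$ into the good set.

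First I locate the ball. Since $\mu$ is nonzero in $\MO$ there is $r_0>0$ with $\mu(\bfS\setminus\bfC^{r_0})>0$. Separability of $\bfS$ allows a countable cover of $\bfS\setminus\bfC^{r_0}$ by open balls of radius $r_0/4$ with centres in $\bfS\setminus\bfC^{r_0}$; at least one such ball $B_{x,r_0/4}$ has $\mu(B_{x,r_0/4})>0$. Every $B_{x,\delta}$ with $\delta\le r_0/4$ therefore lies in $\bfS\setminus\bfC^{3r_0/4}$, and (A3) upgrades this to $S(B_{x,\delta})\subset\bfS\setminus\bfC^{3r_0/4}$ because $d(\lambda y,\bfC)\ge d(y,\bfC)\ge 3r_0/4$ for every $y\in B_{x,\delta}$ and $\lambda\ge 1$. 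In particular $\mu(S(B_{x,\delta}))<\infty$, and the trivial inclusion $B_{x,\delta}\subset S(B_{x,\delta})$ gives $\mu(S(B_{x,\delta}))\ge\mu(B_{x,\delta})>0$ for $\delta$ in a suitable sub-interval of $(0,r_0/4]$.

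For the boundary condition, fix such a $\delta$, write $A=S(B_{x,\delta})$, and note that by (A2) the family $\{rA\}_{r\ge 1}$ is nested decreasing. Introduce the radial function $\rho(y)=\sup\{\lambda\ge 1:y\in\lambda\overline{B_{x,\delta}}\}$ and the ``tangent set'' $T\subset A$ on which $\rho(y)$ is attained by a scaling that merely grazes $\overline{B_{x,\delta}}$ rather than entering its interior. Off $T$ each point of $A$ lies in $\partial(rA)$ for exactly one $r$, namely $r=\rho(y)$, so $\{\partial(rA)\setminus T\}_{r\ge 1}$ is a disjoint family of subsets of the finite-measure set $A$; Lemma~\ref{lem:uncountableunion} forces $\mu(\partial(rA)\setminus T)>0$ for only countably many $r$, and provided $\mu(T)=0$ this gives $\mu(\partial(rA))=0$ for cocountably many $r\ge 1$.

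To secure $\mu(T)=0$ and $1\in R_0:=\{r\ge 1:\mu(\partial(rA))=0\}$, apply Lemma~\ref{lem:uncountableunion} a second time with $\delta$ as the varying parameter: the spheres $\{\partial B_{x,\delta}\}_{\delta\in(0,r_0/4]}$ are pairwise disjoint inside the finite-measure set $B_{x,r_0/4}$, so $\mu(\partial B_{x,\delta})>0$ for at most countably many $\delta$, and a parallel disjointness argument on the family $\{T_\delta\}$ gives a second cocountable set of ``good'' $\delta$. Choosing $\delta$ in the intersection of these two cocountable sets with $\mu(B_{x,\delta})>0$ still valid yields $\mu(\partial A)\le\mu(\partial B_{x,\delta})+\mu(T_\delta)=0$, hence $1\in R_0$. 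The main obstacle is establishing the claimed disjointness of the tangent sets $\{T_\delta\}$ in the abstract metric-space setting: in $\R^p$ with the usual scaling each point lies on the tangent cone of $B_{x,\delta}$ for a single value of $\delta$ (equal to its minimum distance to the ray through the origin), so disjointness is a geometric triviality, whereas extracting it from (A1)--(A3) alone requires combining continuity of scaling with the strict monotonicity in (A3).
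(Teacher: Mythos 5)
Your first paragraph is correct and is exactly what the paper dismisses as ``the first two properties are obvious'': separability produces a ball of positive $\mu$-mass bounded away from $\bfC$, and (A2)--(A3) give $d(\lambda y,\bfC)\geq d(y,\bfC)$ for $\lambda\geq 1$, so $S(B_{x,\delta})$ remains bounded away from $\bfC$, has finite total mass, and inherits positive mass from $B_{x,\delta}$.

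The boundary claim is where the proposal has a genuine gap, and you have named it yourself. Writing $A=S(B_{x,\delta})$, the sets $rA$ are nested in $r$, so for each $z$ the set $\{r\geq 1: z\in\partial(rA)\}$ is an interval, possibly nondegenerate; your $T$ is precisely the set of $z$ for which it is nondegenerate. Lemma \ref{lem:uncountableunion} then yields only $\mu(\partial(rA)\setminus T)=0$ for cocountably many $r$, and the lemma's conclusion requires in addition that $\mu(T)=0$ --- both to upgrade this to $\mu(\partial(rA))=0$ and to handle $r=1$. Your proposed remedy, disjointness of the tangent sets $\{T_\delta\}$ as $\delta$ varies, is asserted only for Euclidean scaling; under (A1)--(A3) the orbits $\lambda\mapsto\lambda y$ need not resemble rays at all (in the examples of Section \ref{subsubsec:eg3} the scaling acts with different exponents in different coordinates, or fixes a coordinate entirely), so there is no reason a point should lie on the ``tangent cone'' of $B_{x,\delta}$ for only one value of $\delta$, and the argument does not close. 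The paper's proof proceeds by a different mechanism that sidesteps tangent sets altogether: it sets $\gamma(r)=d(\partial (rS(B_{x,\delta})),\bfC)$, applies Lemma \ref{newremii} to the family $\bfS\setminus\bfC^{\gamma}$ --- whose boundaries are the level sets $\{z: d(z,\bfC)=\gamma\}$ and hence are disjoint for free --- to exclude countably many bad values of $\gamma$, and then uses monotonicity and continuity of $\gamma(\cdot)$, a consequence of (A3), to pull this back to a positive-measure set of good $r$ containing $1$. Any repair of your route would need to replace the tangent-set geometry by a disjointness that, like this one, follows from the metric alone rather than from Euclidean intuition.
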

\begin{proof} 
The first two properties are obvious. In order to prove the final claim, set
$\gamma(r)=d(\partial rS(B_{x,\delta}),\bfC)$. Notice that $\gamma(r)=d(\partial rB_{x,\delta},\bfC)$ and that $\partial rS(B_{x,\delta})\subset \bfO\setminus \bfC^{\gamma(r)}$. Choose $x\in\bfO$ and $\delta>0$ such that $\mu(\partial S(B_{x,\delta}))=0$ and $\mu(\partial (\bfO\setminus \bfC^{\gamma(1)}))=0$, and such that $\gamma(r')>\gamma(1)$ for some $r'>1$. The existence of such $x$ and $\delta$ follows from Lemma \ref{newremii}. Lemma \ref{newremii} also implies that $\mu(\partial (\bfO\setminus\bfC^{\gamma}))=0$ for all but at most countably many $\gamma\in [\gamma(1),\gamma(r')]$. Since $\gamma(r)$ is a nondecreasing and continuous function and $\gamma(r')>\gamma(1)$, there exists a set $R\subset [1,r']$ of positive Lebesgue measure, with $1\in R$, such that $\mu(\partial (\bfO\setminus\bfC^{\gamma(r)}))=0$ for $r\in R$.
\end{proof} 

Given $\mu\in\MO$, let $\scrA_{\mu}$ denote the set of $\mu$-continuity sets $A\in \scrS_{\bfO}$ bounded away from $\bfC$ satisfying $S(A)=A$.

\begin{lem}\label{lem:convdetclassSA}
If $\mu_n(A)\to\mu(A)$ for all $A\in \scrA_{\mu}$, then $\mu_n\to\mu$ in $\MO$.
\end{lem}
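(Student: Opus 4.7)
The plan is to establish convergence by extracting relatively compact subsequences of $\{\mu_n\}$ and showing every subsequential limit equals $\mu$.

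First, for relative compactness in $\MO$ I would apply Theorem \ref{rc}. Axiom (A3) implies $S(\bfS\setminus\bfC^r)=\bfS\setminus\bfC^r$ for every $r>0$, so by Lemma \ref{newremii} I can pick $r_i\downarrow 0$ with $\bfS\setminus\bfC^{r_i}\in\scrA_\mu$. The hypothesis then yields $\mu_n(\bfS\setminus\bfC^{r_i})\to\mu(\bfS\setminus\bfC^{r_i})<\infty$, giving condition \eqref{rc1}. For the tightness condition \eqref{rc2}, since $\mu^{(r_i)}$ is a finite Borel measure on the Polish space $\bfS\setminus\bfC^{r_i}$ it is tight: given $\eta>0$, pick a compact $K\subset\bfS\setminus\bfC^{r_i}$ with $\mu^{(r_i)}((\bfS\setminus\bfC^{r_i})\setminus K)<\eta/2$, then find an $S$-invariant continuity superset $A\in\scrA_\mu$ of $(\bfS\setminus\bfC^{r_i})\setminus K$ with $\mu(A)<\eta$, and use the hypothesis to conclude $\limsup_n\mu_n((\bfS\setminus\bfC^{r_i})\setminus K)\leq\mu(A)\leq\eta$.

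Second, let $\nu$ be any subsequential limit, say $\mu_{n_k}\to\nu$ in $\MO$. For each $A\in\scrA_\mu$ that is also a $\nu$-continuity set, Theorem \ref{portthm}(iv) gives $\mu_{n_k}(A)\to\nu(A)$, while the hypothesis gives $\mu_{n_k}(A)\to\mu(A)$, forcing $\nu(A)=\mu(A)$. Lemma \ref{lem:goodsets} yields an ample supply of scale-saturated balls $S(B_{x,\delta})\in\scrA_\mu$, and by perturbing $\delta$ slightly we may simultaneously arrange $\nu(\partial S(B_{x,\delta}))=0$. This subclass is a $\pi$-system (closed under finite intersections, since $\partial(A_1\cap A_2)\subset\partial A_1\cup\partial A_2$ and $S(A_1\cap A_2)=A_1\cap A_2$) and generates the Borel $\sigma$-algebra on each $\bfS\setminus\bfC^{r_i}$. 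A Dynkin $\pi$-$\lambda$ argument then gives $\nu^{(r_i)}=\mu^{(r_i)}$ for each $i$, whence $\nu=\mu$ in $\MO$.

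Since the relatively compact sequence $\{\mu_n\}$ admits $\mu$ as its unique subsequential limit, we conclude $\mu_n\to\mu$ in $\MO$. The main obstacle is the generating-class step: verifying that the $S$-invariant $\mu$-continuity sets form a rich enough $\pi$-system to determine finite measures on each $\bfS\setminus\bfC^{r_i}$. The crucial enabler is Lemma \ref{lem:goodsets}, which supplies scale-saturated balls $S(B_{x,\delta})$ with negligible $\mu$-boundaries (and, after perturbation, negligible $\nu$-boundaries); in concrete settings like $\R_+^d$ this reduces to the classical fact that upper quadrants determine finite Borel measures via inclusion-exclusion.
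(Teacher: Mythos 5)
Your strategy (relative compactness plus identification of subsequential limits) is legitimate in principle, but both of its pillars rest on steps that are asserted rather than proved, and neither is routine. The first is the tightness condition \eqref{rc2}: you need an $S$-invariant set $A\in\scrA_\mu$ containing $(\bfS\setminus\bfC^{r_i})\setminus K$ with $\mu(A)<\eta$. Any such $A$ must contain $S\bigl((\bfS\setminus\bfC^{r_i})\setminus K\bigr)$, and saturating the complement of a compact set under upward scaling can swallow almost all of the mass: already for $\bfS=[0,\infty)$, $\bfC=\{0\}$ with the usual scaling, taking $K=[r,1]\cup[2,3]$ gives $S\bigl((\bfS\setminus\bfC^{r})\setminus K\bigr)=(1,\infty)\supset[2,3]$, so if $\mu$ charges mainly $[2,3]$ no small $S$-invariant superset exists for this $K$. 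Hence the compact set supplied by tightness of the single measure $\mu^{(r_i)}$ must be chosen ``downward saturated'' along scaling orbits, and you do not show that such compacts exist (or carry almost all the mass) under only (A1)--(A3); nothing in those axioms forces the orbit segments $\{\theta y:\theta\in(0,1],\,d(\theta y,\bfC)\geq r_i\}$ to be relatively compact.

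The second and more serious gap is the one you flag yourself and then leave open: you never show that the $S$-invariant $\mu$- and $\nu$-continuity sets generate the Borel $\sigma$-algebra of $\bfS\setminus\bfC^{r_i}$, so the $\pi$--$\lambda$ argument does not get off the ground. The sets $S(B_{x,\delta})$ and their finite intersections are all upward-saturated, and it is not evident that the $\sigma$-algebra they generate contains any ball, hence any set that is not itself $S$-invariant; Lemma \ref{lem:goodsets} furnishes a single reference set with good boundary behaviour (its role in the paper is in the proof of Theorem \ref{thmrvnt}), not a generating class. The paper's own proof avoids both difficulties: it passes to the $\pi$-system $\scrD_\mu$ of \emph{differences} $A_1\setminus A_2$ with $A_2\subset A_1$, $A_1,A_2\in\scrA_\mu$, on which convergence still holds because $\mu_n(A_1\setminus A_2)=\mu_n(A_1)-\mu_n(A_2)$; the identity $B_{x,r}=S(B_{x,r})\setminus\bigl(S(B_{x,r})\setminus B_{x,r}\bigr)$ then puts small balls around every point into $\scrD_\mu$, and the Billingsley covering and inclusion--exclusion argument yields the open-set lower bound, with the closed-set upper bound obtained by complementation inside the $S$-invariant continuity set $\bfO\setminus\bfC^r$ --- verifying Portmanteau condition (iii) directly, with no compactness and no Dynkin class. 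If you wish to keep your architecture you would at least have to enlarge your $\pi$-system to such differences, at which point the direct Portmanteau route is shorter.
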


\begin{proof}
Let $\scrD_{\mu}$ denote the $\pi$-system of finite differences of sets of the form $A_1\setminus A_2$ for $A_1,A_2\in\scrA_{\mu}$ with $A_2\subset A_1$.
Take $x \in \bfO$ and $\vep>0$ such that $B_{x,\vep}$ is bounded away from $\bfC$. The sets $\partial S(B_{x,r})$, for $r\in (0,\vep)$, are disjoint. Similarly, the sets $\partial B_{x,r}$, for $r\in (0,\vep)$, are disjoint. Therefore, $\mu(\partial S(B_{x,r}))=\mu(\partial B_{x,r})=0$ for all but at most countably many $r\in(0,\vep)$. Moreover, $B_{x,r}=S(B_{x,r})\setminus (S(B_{x,r})\setminus B_{x,r})$, so $B_{x,r}\in \scrD_{\mu}$ for all but at most countably many $r\in (0,\vep)$.
Hence, there exists $A\in \scrD_{\mu}$ such that $x\in A^{\circ}\subset A\subset B_{x,\vep}$.
Moreover, for any $x$ in an open set $G$ bounded away from $\bfC$,
there exists $A\in\scrD_{\mu}$ such that $x\in A^{\circ}\subset
A\subset G$. Since $\bfO$ is separable we find (as in the proof of
Theorem 2.3 in \cite{billingsley:1999}) that there is a countable
subcollection $\{A_{x_i}^{\circ}\}$ of $\{A_x^{\circ}:x\in G\}$,
$A_x^{\circ}\in\scrD_{\mu}$, that covers $G$ and that $G=\cup_i
A_{x_i}^{\circ}$. The inclusion-exclusion argument in the proof of
Theorem 2.2 in \cite{billingsley:1999} this implies that
$\liminf_{n}\mu_n(G)\geq \mu(G)$ for all open sets $G$ bounded away
from $\bfC$.  
Any closed $F$ bounded away from $\bfC$ is a subset of an open $\mu$-continuity set $A=\bfO \setminus C^r$ for some $r>0$. Notice that $A\in \scrA_{\mu}$. Therefore
\begin{align*}
\mu(A)-\limsup_{n}\mu_n(F)=\liminf_{n}\mu_n(A\setminus F)\geq \mu(A\setminus F)=\mu(A)-\mu(F),
\end{align*} 
i.e.~$\limsup_{n}\mu_n(F)\leq \mu(F)$. The conclusion follows from Theorem \ref{portthm}.
\end{proof}

\subsubsection{Proof of Theorem \ref{thmrvnt}.}
The proof is structured as follows. We first prove that (iii) implies the homogeneity property in \eqref{eqn:scaling} of the limit measure $\mu$. Then we prove that the statements (i)-(v) are equivalent and that the limit measures are the same up to a constant factor.

Suppose that (iii) holds and take $E'=S(B_{x,\delta})$ satisfying the conditions in Lemma \ref{lem:goodsets}. Then, for $\lambda\geq 1$ in a set of positive measure containing $1$,
\begin{align*}
  \frac{\nu(t\lambda E')}{\nu(tE')}
  = \frac{\nu(t\lambda E')}{\nu(tE)}\frac{\nu(tE)}{\nu(tE')}
  \to \frac{\mu(\lambda E')}{\mu(E')} \in (0,\infty)
\end{align*}
as $t\to\infty$. It follows from Theorem 1.4.1 in \cite{bingham:goldie:teugels:1987} that $t\mapsto \nu(tE')$ is regularly varying and that $\mu(\lambda E')=\lambda^{-\alpha}\mu(E')$ some $\alpha\in\R$ and all $\lambda>0$. Property (A3) implies that $\nu(t\lambda E')/\nu(t E')\leq 1$ for $\lambda\geq 1$ so $\alpha \geq 0$. Moreover, $\mu(\partial (\lambda E'))=0$ for all $\lambda > 0$ and $\nu(tE')^{-1}\nu(t\cdot)\to\mu(E')^{-1}\mu(\cdot)$ in $\MO$ as $t\to\infty$.
In particular, if $A \in \scrA_\mu$, then for any $\lambda > 0$, 
\begin{align*}
  \frac{\nu(t\lambda A)}{\nu(tE')}
  = \frac{\nu(t\lambda A)}{\nu(t\lambda E')}
  \frac{\nu(t\lambda E')}{\nu(tE')}
  \to\lambda^{-\alpha}\frac{\mu(A)}{\mu(E')}
\end{align*}
as $t\to\infty$. Hence, for $A \in \scrA_\mu$ $\mu(\lambda A)=\lambda^{-\alpha}\mu(A)$ for all $\lambda > 0$. By Lemma \ref{lem:convdetclassSA} it follows that  
$\mu(\lambda A)=\lambda^{-\alpha}\mu(A)$ for all $A \in \scrS_{\bfO}$ and $\lambda>0$.

Suppose that (i) holds and set $c(t)=c_{[t]}$.
For each $A \in \scrA_{\mu}$ and $t\geq 1$ it holds that
\begin{align}\label{ntbounds}
  \frac{c_{[t]}}{c_{[t]+1}}c_{[t]+1}\nu(([t]+1)A)
  \leq c(t)\nu(tA)\leq c_{[t]}\nu([t]A).
\end{align}
Since $\{c_n\}_{n\geq 1}$ is regularly varying it holds that $\lim_{n\to\infty}c_n/c_{n+1}=1$.
Hence, $\lim_{t\to\infty}c(t)\nu(tA)=\mu(A)$ for all $A\in\scrA_\mu$. It follows from Lemma \ref{lem:convdetclassSA} that (ii) holds.

Suppose that (ii) holds. Then $c_{[t]}\nu([t]\cdot)\to \mu(\cdot)$ in $\MO$. Moreover, $\{c_{[t]}\}$ is a regularly varying sequence since $c(t)$ is a regularly varying function. Therefore, (ii) implies (i).
 
Suppose that (ii) holds. Take a set $E\in\scrS_{\bfO}$ bounded away from $\bfC$ such that $\nu(tE),\mu(E)>0$ and $\mu(\partial E)=0$. Then 
\begin{align*}
  \frac{\nu(t\cdot)}{\nu(tE)}
  =\frac{c(t)\nu(t\cdot)}{c(t)\nu(tE)}
  \to \frac{\mu(\cdot)}{\mu(E)}
\end{align*}
as $t\to\infty$. Hence, by Theorem \ref{portthm} (ii), (iii) holds.

Suppose that (iii) holds. It was already proved in (a) above that statement (iii) implies that $t\mapsto \nu(tE)$ is regularly varying with index $-\alpha\leq 0$. Setting $c(t)=1/\nu(tE)$ implies that $c(t)$ is regularly varying with index $\alpha$ and that $c(t)\nu(t\cdot)\to\mu(\cdot)$ in $\MO$. This proves that (iii) implies (ii). Up to this point we have proved that statements (i)-(iii) are equivalent.

Suppose that (iv) holds. Set $b(t)=b_{[t]}$ and take $A\in\scrA_{\mu}$. Then 
\begin{align*}
\frac{[t]}{[t+1]}[t+1]\nu(b_{[t+1]}A)\leq t\nu(b(t)A)\leq \frac{[t+1]}{[t]}[t]\nu(b_{[t]}A)
\end{align*}
from which it follows that $\lim_{t\to\infty}t\nu(b(t)A)=\mu(A)$. It follows from Lemma \ref{lem:convdetclassSA} that (v) holds. 
If (v) holds, then it follows immediately that also (iv) holds. Hence, statements (iv) and (v) are equivalent.

Suppose that (iv) holds.
Take $E$ such that $\mu(\partial E)=0$ and $\mu(E)>0$.
For $t>b_1$, let $k=k(t)$ be the largest integer with $b_k\leq t$.
Then $b_k \leq t < b_{k+1}$ and $k\to\infty$ as $t\to\infty$.
Hence, for $A\in \scrA_{\mu}$,
\begin{align*}
  \frac{k}{k+1}
  \frac{(k+1)\nu(b_{k+1}A)}{k\nu(b_k E)}
  \leq \frac{\nu(tA)}{\nu(tE)}
  \leq \frac{k+1}{k}
  \frac{k\nu(b_{k}A)}{(k+1)\nu(b_{k+1}E)}
\end{align*}
from which it follows that $\lim_{t\to\infty}\nu(tA)/\nu(tE)=\mu(A)/\mu(E)$. It follows from Lemma \ref{lem:convdetclassSA} that (iii) holds. Hence, each of the statments (iv) and (v) implies each of the statements (i)-(iii).

Suppose that (iii) holds. Then $c(t):=1/\nu(tE)$ is regularly varying at infinity with index $\alpha\geq 0$. If $\alpha>0$, then $c(c^{-1}(t))\sim t$ as $t\to\infty$ by Proposition B.1.9 (10) in \cite{dehaan:ferreira:2006} and therefore
\begin{align*}
\lim_{t\to\infty}t\nu(c^{-1}(t)A)=\lim_{t\to\infty}c(c^{-1}(t))\nu(c^{-1}(t)A)=\mu(A)
\end{align*}
for all $A\in\scrS_{\bfO}$ bounded away from $\bfC$ with $\mu(\partial A)=0$.
If $\alpha=0$, then Proposition 1.3.4 in \cite{bingham:goldie:teugels:1987} says that there exists a continuous and increasing function $\widetilde{c}$ such that $\widetilde{c}(t)\sim c(t)$ as $t\to\infty$. In particular, $\widetilde{c}(\widetilde{c}^{-1}(t))=t$ and 
\begin{align*}
t\nu(\widetilde{c}^{-1}\,\cdot)=\widetilde{c}(\widetilde{c}^{-1}(t))\nu(\widetilde{c}^{-1}\,\cdot)\to\mu(\cdot)
\end{align*}
in $\M_{\bfO}$ as $t\to\infty$.
Hence, $(v)$ holds.
\qed

\section{$\mathbb{R}_+^\infty$ and $\mathbb{R}_+^p$}\label{seqSpace}
This section considers regular variation {for measures on} the metric spaces
$\mathbb{R}_+^\infty$ and $\mathbb{R}_+^p$ for $p\geq 1$ and applies the theory of
Sections \ref{secconv} and \ref{secrv}. We begin in Section
\ref{subsec:pre} with notation and specification of metrics and then
address
in Section \ref{subsub:cont} continuity properties for the
following maps
\begin{itemize}
\item $\cumsum: (x_1,x_2,\dots)\mapsto (x_1, x_1 + x_2, x_1 + x_2
  +x_3,\dots)$.
\item $\projp:  (x_1,x_2,\dots)\mapsto (x_1,\dots,x_p)$.
\item $\polar: (x_1,\dots,x_p) \mapsto \bigl(\|x\|,  (x_1,\dots,x_p)
  /\|x\|\bigr)$, where $x=(x_1,\dots,x_p)$ and the norm is Euclidean norm on $\R_+^p$. We
  also define the generalized polar coordinate
  transformation in \eqref{eq:gpolarDef} which is necessary for
  estimating the tail measure of regular variation when the Euclidean
  unit sphere $\{x \in \R_+^p: \|x\|=1\}$ is not bounded away from
  $\bfC$ in the space $\R^\infty_+ \setminus \bfC$.
\end{itemize}
Section \ref{subsec:reduce} { reduces the convergence question to finite
dimensions by giving} criteria for reduction of
convergence of measures in $\bfM(\R_+^\infty \setminus \bfC)$
to convergence of projected measures in $\bfM(\R_+^p\setminus
\projp(\bfC))$. {Section \ref{subsub:compareCompact} returns }to
the comparison of vague convergence  with $\bfM$-convergence initiated
in Section \ref{subsec:MvsVague} {with the goal of using} existing results
based on  regular variation in a compactified version $\R_+^p\setminus \{0\}$ in our present context, rather than proving things from
scratch. Section \ref{seqSpace} concludes with Section
\ref{subsec:regVar}, a discussion of regular variation of measures on
$\R^\infty_+ \setminus \bfC$ giving particular attention to hidden
regular variation properties of the distribution of
$X=(X_1,X_2,\dots )$, a sequence of iid non-negative random
variables whose marginal distriutions have regularly varying
tails. This discussion extends naturally to hidden regular variation
properties of an infinite sequence of non-negative decreasing Poisson
points whose mean measure has a regularly varying tail. 
Results for the Poisson sequence provide the basis of 
our approach in the next Section \ref{sec:hrvLevy}
to regular variation of the distribution of a L\'evy
process whose L\'evy measure is regularly varying.

\subsection{Preliminaries.}\label{subsec:pre}
We write $x \in \R_+^\infty$ as $x =(x_1,x_2, \dots )$. For
$p\geq 1$, the projection into 
$\Rplus^p$ is written as $\projp(x) =x_{|p} =(x_1,\dots,x_p)$. To avoid
confusion, we sometimes write $0_\infty=(0,0,\dots ) \in
\Rplus^\infty$ and $0_p  =(0,\dots,0),$  the vector of $p$
zeros. We also augment a vector in $\Rplus^p$ to get a sequence in
$\Rplus^\infty$ and write, for instance,
$$(x_1,\dots,x_p,0_\infty)=(x_1,\dots,x_p,0,0,\dots ).$$

\subsubsection{Metrics.}\label{subsub:metrics} 
{All metrics are equivalent on $\Rplus^p$.}  {The usual metric
  on} $\Rplus^\infty$  is 
$$\dinfty(x,y)=\sum_{i=1}^\infty \frac{|x_i-y_i|\wedge 1}{2^i},$$
and we also  need 
$$\dinfty'(x,y)=\sum_{p=1}^\infty \frac{\bigl( \sum_{l=1}^p
  |x_l-y_l|\bigr)\wedge 1}{2^p}
=\sum_{p=1}^\infty \frac{\|   x_{|p} -y_{|p} \|_1 \wedge 1}{2^p},$$
where $\|\cdot\|_1 $ is the usual $L_1$ norm on $\Rplus^p$.

\begin{prop}\label{prop:equivMetrics}
The metrics $\dinfty$ and $\dinfty'$ are equivalent on $\Rplus^\infty$ and 
$$\dinfty (x,y) \leq \dinfty'(x,y) \leq 2\dinfty (x,y).$$
\end{prop}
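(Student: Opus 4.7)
The plan is to establish the two stated inequalities separately; once both hold, equivalence of the metrics follows immediately from the sandwich.

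For the lower bound $\dinfty(x,y) \leq \dinfty'(x,y)$, I would compare the series term by term. The $p$-th summand of $\dinfty'$ is $\bigl(\sum_{l=1}^p |x_l-y_l|\bigr)\wedge 1$, which dominates $|x_p-y_p|\wedge 1$ since the truncation of a larger non-negative quantity at $1$ cannot be smaller. Summing $1/2^p$-weighted inequalities then gives $\dinfty'(x,y) \geq \dinfty(x,y)$.

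For the upper bound $\dinfty'(x,y) \leq 2\dinfty(x,y)$, the main tool is the subadditivity relation $(a+b)\wedge 1 \leq (a\wedge 1)+(b\wedge 1)$ for $a,b \geq 0$, applied iteratively to obtain
\begin{equation*}
\Bigl(\sum_{l=1}^p |x_l-y_l|\Bigr)\wedge 1 \;\leq\; \sum_{l=1}^p \bigl(|x_l-y_l|\wedge 1\bigr).
\end{equation*}
Plugging this into the definition of $\dinfty'$ and swapping the order of summation (all terms are non-negative, so Tonelli's theorem applies) yields
\begin{equation*}
\dinfty'(x,y) \;\leq\; \sum_{l=1}^\infty \bigl(|x_l-y_l|\wedge 1\bigr)\sum_{p=l}^\infty \frac{1}{2^p} \;=\; \sum_{l=1}^\infty \bigl(|x_l-y_l|\wedge 1\bigr)\cdot\frac{2}{2^l} \;=\; 2\dinfty(x,y),
\end{equation*}
using the geometric sum $\sum_{p\geq l} 2^{-p} = 2^{1-l}$.

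There is really no serious obstacle here: the argument is a two-line computation once one notices the subadditivity of $\cdot \wedge 1$ and the index swap. The only mildly delicate point is making sure the interchange of summation is justified, but this is immediate by Tonelli since every term is non-negative. Equivalence of the two metrics then follows from the two-sided bound.
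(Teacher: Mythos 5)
Your proposal is correct and follows essentially the same route as the paper: the lower bound by termwise comparison of the truncated quantities, and the upper bound via the subadditivity $(\sum_l a_l)\wedge 1 \leq \sum_l (a_l\wedge 1)$ followed by an interchange of summation and the geometric sum $\sum_{p\geq l}2^{-p}=2^{1-l}$. The only cosmetic difference is that you explicitly invoke Tonelli for the index swap, which the paper leaves implicit; nothing further is needed.
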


\begin{proof} First of all,
\begin{align*}
\dinfty'(x,y) =\sum_{i=1}^\infty \frac{\bigl( \sum_{l=1}^i
  |x_l-y_l|\bigr)\wedge 1}{2^i} \geq 
\sum_{i=1}^\infty \frac{
  |x_i-y_i|\wedge 1}{2^i} =\dinfty (x,y).
\end{align*}
For the other inequality, observe
\begin{align*}
\dinfty '(x,y) =&\sum_{i=1}^\infty \frac{\bigl( \sum_{l=1}^i
  |x_l-y_l|\bigr)\wedge 1}{2^i}
\leq \sum_{i=1}^\infty \frac{ \sum_{l=1}^i
  \bigl(|x_l-y_l|\wedge 1\bigr)}{2^i}\\
=&\sum_{l=1}^\infty  \sum_{i=l}^\infty 2^{-i} 
  \bigl(|x_l-y_l|\wedge 1\bigr)
=\sum_{l=1}^\infty  2\cdot 2^{-l}
  \bigl(|x_l-y_l|\wedge 1\bigr) = 2\dinfty(x,y).
\end{align*}
\end{proof}

\subsection{Continuity of maps.}\label{subsub:cont}
With a view toward applying Corollary \ref{cor:variant1}, we consider
the continuity of several maps. 

\subsubsection{$\cumsum$.}\label{subsub:cumsumCont}
We begin with the map
$\cumsum : \Rplus^\infty \mapsto \Rplus^\infty$
defined by
$$\cumsum (x)=(x_1,x_1+x_2, x_1+x_2 +x_3,\dots).$$

\begin{prop}\label{prop:cumsumCont}
The map $\cumsum : \Rplus^\infty \mapsto \Rplus^\infty$ is uniformly continuous and, in fact, is Lipshitz in the $\dinfty $ metric.
\end{prop}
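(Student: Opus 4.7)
The plan is to bound $\dinfty(\cumsum(x),\cumsum(y))$ by $\dinfty'(x,y)$ and then invoke Proposition \ref{prop:equivMetrics} to turn this into a bound in terms of $\dinfty(x,y)$, yielding the Lipschitz constant $2$.

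First I would write $u=\cumsum(x)$ and $v=\cumsum(y)$, so that $u_p-v_p=\sum_{l=1}^p(x_l-y_l)$ and hence $|u_p-v_p|\le \sum_{l=1}^p |x_l-y_l|$. Capping both sides at $1$, dividing by $2^p$, and summing yields
\begin{align*}
\dinfty(\cumsum(x),\cumsum(y))
=\sum_{p=1}^\infty \frac{|u_p-v_p|\wedge 1}{2^p}
\le \sum_{p=1}^\infty \frac{\bigl(\sum_{l=1}^p|x_l-y_l|\bigr)\wedge 1}{2^p}
=\dinfty'(x,y).
\end{align*}

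Combining this with the inequality $\dinfty'(x,y)\le 2\dinfty(x,y)$ from Proposition \ref{prop:equivMetrics} gives $\dinfty(\cumsum(x),\cumsum(y))\le 2\dinfty(x,y)$, which is the Lipschitz (hence uniform continuity) conclusion with constant $2$. I do not anticipate any obstacle: the proof is essentially a two-line chain of inequalities, and the only non-trivial input (the metric comparison) is already in hand from the preceding proposition. The one small care point is the interchange $|\sum (x_l-y_l)|\le \sum|x_l-y_l|$ before truncating at $1$, which is valid since $t\mapsto t\wedge 1$ is monotone on $[0,\infty)$.
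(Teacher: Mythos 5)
Your proof is correct and is essentially identical to the paper's: both bound $\dinfty(\cumsum(x),\cumsum(y))$ by $\dinfty'(x,y)$ via the triangle inequality and monotonicity of $t\mapsto t\wedge 1$, then invoke Proposition \ref{prop:equivMetrics} to get the Lipschitz constant $2$. No gaps.
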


\begin{proof}
We write
\begin{align*}
\dinfty\bigl(\cumsum (x),&\cumsum (y)\bigr)
=\sum_{i=1}^\infty \frac{ \bigl | \sum_{l=1}^i x_l- \sum_{l=1}^i y_l
  \bigr |\wedge 1}{2^i}\\
\leq &\sum_{i=1}^\infty \frac{ \bigl(\sum_{l=1}^i   | x_l-  y_l  |
  \bigr)\wedge 1}{2^i}
=\dinfty ' (x,y) \leq 2 \dinfty (x,y).
\end{align*}
\end{proof}

We can now apply Corollary \ref{cor:variant1}.
\begin{cor}\label{cor:cumsumRV}
Let $\bfS=\bfS'=\Rplus^\infty$ and suppose $\bfC $ is closed in 
$\Rplus^\infty $ and $\cumsum (\bfC) $ is closed in 
$\Rplus^\infty .$ If for $n \geq 0$,
${\mu_n} \in \bfM(\Rplus^\infty \setminus \bfC)$ and $\mu_n\to\mu_0$ 
in $\bfM(\Rplus^\infty \setminus \bfC)$, then
${\mu_n \circ \cumsum^{-1}} \to \mu_0 \circ \cumsum^{-1}$ 
in $\bfM(\Rplus^\infty \setminus \cumsum(\bfC))$.
\end{cor}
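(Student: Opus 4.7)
The plan is to obtain this as a direct application of Corollary \ref{cor:variant1} with $h=\cumsum$, since the two hypotheses of that corollary are precisely what we already have at hand. Once $\hat h:\bfM_{\bfO}\to\bfM_{\bfO'}$ is shown to be continuous, continuity translates sequential convergence $\mu_n\to\mu_0$ in $\bfM(\Rplus^\infty\setminus \bfC)$ into $\hat h(\mu_n)=\mu_n\circ\cumsum^{-1}\to\mu_0\circ\cumsum^{-1}$ in $\bfM(\Rplus^\infty\setminus\cumsum(\bfC))$, which is exactly the conclusion.

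First I would verify the uniform continuity hypothesis of Corollary \ref{cor:variant1}. This is immediate from Proposition \ref{prop:cumsumCont}, which shows $\cumsum:\Rplus^\infty\to\Rplus^\infty$ is Lipschitz (with constant $2$) in the metric $\dinfty$, and in particular uniformly continuous. Next I would note that the other hypothesis required by Corollary \ref{cor:variant1}, namely that the image $\cumsum(\bfC)$ be closed in $\Rplus^\infty$, is explicitly assumed in the statement of the corollary. Hence Corollary \ref{cor:variant1} applies with $\bfS=\bfS'=\Rplus^\infty$, $h=\cumsum$, and $\bfC'=\cumsum(\bfC)$, yielding continuity of the induced map $\hat{\cumsum}:\bfM(\Rplus^\infty\setminus\bfC)\to\bfM(\Rplus^\infty\setminus\cumsum(\bfC))$.

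Applying this continuity to the convergent sequence $\mu_n\to\mu_0$ gives the stated conclusion. There is no real obstacle here: the proof is a one-line invocation of Corollary \ref{cor:variant1}, with Proposition \ref{prop:cumsumCont} supplying the uniform continuity and the closedness of $\cumsum(\bfC)$ being hypothesized (the latter cannot in general be dropped, since $\cumsum$ is not a closed map on $\Rplus^\infty$, so it is natural that this assumption appears as a side condition rather than being deduced).
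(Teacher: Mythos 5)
Your proposal is correct and is precisely the paper's argument: the corollary is stated immediately after Proposition \ref{prop:cumsumCont} as an application of Corollary \ref{cor:variant1}, with the Lipschitz bound $\dinfty(\cumsum(x),\cumsum(y))\leq 2\dinfty(x,y)$ supplying uniform continuity and the closedness of $\cumsum(\bfC)$ taken as a hypothesis. Nothing is missing.
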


For example, if $\bfC=\{0_\infty\}$, then $\cumsum (\bfC)=\{0_\infty\}$. 
For additional examples, see \eqref{eq:Fj}.

\subsubsection{PROJECTION}\label{subsub:proj}
For $p\geq 1$, recall $\projp (x)= x_{|p} =(x_1,\dots,x_p)$
from $\Rplus^\infty \mapsto \Rplus^p$.
\begin{prop}\label{prop:projCont} 
$\projp : \Rplus^\infty \mapsto \Rplus^p$ is uniformly continuous.
\end{prop}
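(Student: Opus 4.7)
The plan is to work directly from the definition of $\dinfty$ and choose any convenient equivalent metric on $\Rplus^p$ (for instance $\|\cdot\|_1$). The single observation that drives everything is the trivial per-coordinate bound
\[
\dinfty(x,y) \geq \frac{|x_i - y_i|\wedge 1}{2^i}, \qquad i\geq 1,
\]
obtained by retaining only the $i$th term of the defining series. The issue is that $\dinfty$ truncates each coordinate difference by $1$, so we cannot immediately extract $|x_i - y_i|$; we first need to ensure the minimum with $1$ is inactive by forcing $\delta$ to be small.

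Concretely, I would fix $\vep > 0$ and seek a single $\delta = \delta(\vep, p)$, independent of $x,y$, such that $\dinfty(x,y) < \delta$ implies $\|\projp(x) - \projp(y)\|_1 < \vep$. From the coordinate bound, $\dinfty(x,y) < \delta$ gives $|x_i - y_i|\wedge 1 < 2^i \delta$ for every $i$. Restricting to $i \leq p$ and imposing $\delta < 2^{-p}$ makes $2^i \delta < 1$ throughout this range, so the truncation drops out and we get $|x_i - y_i| < 2^i \delta \leq 2^p \delta$. Summing over $i = 1,\dots,p$ yields
\[
\|\projp(x) - \projp(y)\|_1 = \sum_{i=1}^p |x_i - y_i| < p\, 2^p \delta,
\]
and the choice $\delta = \min\bigl(2^{-p},\ \vep/(p\, 2^p)\bigr)$ finishes the proof.

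There is no real obstacle: the argument is a two-line estimate, and because $\delta$ depends only on $\vep$ and $p$ (not on $x,y$), the continuity is automatically uniform. Once $\projp$ is in hand, Corollary \ref{cor:variant1} will then convert regular variation on $\Rplus^\infty \setminus \bfC$ into regular variation of the projected measure on $\Rplus^p \setminus \projp(\bfC)$, provided $\projp(\bfC)$ is closed, which is the intended downstream use.
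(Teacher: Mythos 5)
Your proof is correct and follows essentially the same route as the paper: both arguments bound the first $p$ coordinate differences using the definition of $\dinfty$, remove the truncation by $1$ by forcing the relevant quantity below $1$, and produce an explicit $\delta$ depending only on $\vep$ and $p$ (yours via a per-coordinate bound giving the constant $p\,2^p$, the paper's via bounding the partial sum giving $2^p$). The difference is purely cosmetic.
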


\begin{proof}
Let $d_p(x_{|p},y_{|p})=\sum_{i=1}^p |x_i-y_i|$ be the usual $L_1$
metric.  Given $0<\epsilon <1$, we must find $\delta >0$ such that
$\dinfty (x,y)<\delta$ implies $d_p(x_{|p},y_{|p}) <\epsilon.$
We try $\delta=2^{-p}\epsilon.$ Then
\begin{align*}
\delta=2^{-p}\epsilon >&\dinfty (x,y)=
\sum_{i=1}^\infty \frac{|x_i-y_i|\wedge 1}{2^i}
\geq \sum_{i=1}^p \frac{|x_i-y_i|\wedge 1}{2^i}
\\
\geq & {2^{-p}} \sum_{i=1}^p {|x_i-y_i|\wedge 1}.
\end{align*}
Therefore
$\epsilon \geq \sum_{i=1}^p {|x_i-y_i|\wedge 1},$ so that
$\epsilon \geq \sum_{i=1}^p |x_i-y_i| =d_p(x_{|p},y_{|p})$.
\end{proof}

Apply Corollary \ref{cor:variant1}:
\begin{cor}\label{cor:projConv}
Let $\bfS=\bfS'=\Rplus^\infty$ and suppose $\bfC$ is closed in 
$\Rplus^\infty$ and $\projp (\bfC) $ is closed in
$\Rplus^\infty.$ If for $n \geq 0$,
${\mu_n} \in \bfM(\Rplus^\infty \setminus \bfC)$ and $\mu_n \to \mu_0$ 
in  $\bfM(\Rplus^\infty \setminus \bfC)$, then
${\mu_n \circ \projp^{-1}} \to \mu_0 \circ \projp^{-1}$ 
in $\bfM(\Rplus^p \setminus \projp(\bfC))$.
\end{cor}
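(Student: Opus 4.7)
The plan is to deduce this corollary as a direct application of Corollary \ref{cor:variant1}, exactly as done in the immediately preceding Corollary \ref{cor:cumsumRV} for the $\cumsum$ map. The set-up consists in taking $\bfS = \Rplus^\infty$, $\bfS' = \Rplus^p$, the closed set $\bfC \subset \Rplus^\infty$ as given, and the map $h = \projp$, so that the deleted set in the target space is $\bfC' := \projp(\bfC)$ (which I read the hypothesis as asserting is closed in $\Rplus^p$).

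The two hypotheses of Corollary \ref{cor:variant1} are essentially free here. First, uniform continuity of $h = \projp : \Rplus^\infty \to \Rplus^p$ is precisely the content of Proposition \ref{prop:projCont}, and so needs no further argument. Second, the requirement $h(\bfC) = \projp(\bfC)$ closed in $\Rplus^p$ is included as a hypothesis of the corollary itself. With these verified, Corollary \ref{cor:variant1} yields continuity of the induced pushforward map $\hat h : \bfM(\Rplus^\infty \setminus \bfC) \to \bfM(\Rplus^p \setminus \projp(\bfC))$, $\hat h(\mu) = \mu \circ \projp^{-1}$, from which the stated convergence $\mu_n \circ \projp^{-1} \to \mu_0 \circ \projp^{-1}$ follows at once.

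There is no real obstacle. The only subtlety worth flagging is the need to assume $\projp(\bfC)$ is closed, since projection of a closed set need not be closed in general (for example, $\projp$ is not a closed map on $\Rplus^\infty$ when restricted to unbounded closed sets whose later coordinates blow up while the first $p$ coordinates approach but never reach a limit). This is why the closedness of $\projp(\bfC)$ must be imposed as a hypothesis rather than derived, paralleling the analogous assumption on $\cumsum(\bfC)$ in Corollary \ref{cor:cumsumRV}. Given that, the proof is a one-line invocation.
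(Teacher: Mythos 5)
Your proof is correct and is exactly the paper's own argument: the corollary is stated immediately after the line ``Apply Corollary \ref{cor:variant1}:'' with uniform continuity of $\projp$ supplied by Proposition \ref{prop:projCont} and closedness of $\projp(\bfC)$ taken as a hypothesis. Your reading of ``closed in $\Rplus^\infty$'' as a typo for ``closed in $\Rplus^p$'' is the intended one, and your remark on why closedness of the projected cone must be assumed rather than derived is a sensible (and accurate) aside.
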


\subsubsection{POLAR COORDINATE
  TRANSFORMATIONS}\label{subsub:polarCont}
The polar coordinate transformation in $\R_+^p$ is heavily relied
  upon when making inferences about the limit measure of regular
  variation. Transforming from Cartesian to polar coordinates disintegrates
  the transformed limit measure into a product measure, one of whose
  factors concentrates on the unit sphere. This factor is called the
  angular measure. Estimating the angular measure and then
  transforming back to Cartesian coordinates provides the most
  reliable inference technique for tail probability estimation in
  $\R_+^p$ using heavy tail asymptotics. See \cite[pages 173ff,
  313]{resnickbook:2007}.
When removing more than $\{0\}$ from $\R^p_+$, the unit sphere
may no longer be bounded away from what is removed and an alternative
technique we call the {\it generalized polar coordinate
  transformation\/} can be used.

We continue the discussion of Example \ref{eg:polar} that relied on {the}
mapping Theorem \ref{mapthm}. Here we rely on
Corollary \ref{cor:2ndVariant}.
Pick a norm $\|\cdot \|$ on
$\Rplus^p$ and define $\aleph=\{x \in \Rplus^p: \|x\|=1\}$. The conventional polar coordinate
transform $\polar: \Rplus^p\setminus \{0_p\} \mapsto (0,\infty)\times\aleph$ has definition,
\begin{equation}\label{eq:polarDef}
\polar (x)=\Bigl(\|x\|, {x}/{\|x\|} \Bigr).
\end{equation}
Compared with the notation of Corollary \ref{cor:2ndVariant}, we have 
$\bfS=\Rplus^p,$ $ \bfC =\{0_p\} $ which is compact in $\bfS$, $ \bfS'=[0,\infty)\times \aleph,
\bfC'=\{0\}\times \aleph$ which is closed in $\bfC').$ Since $\polar $ is
continuous on the domain, we get from Corollary \ref{cor:2ndVariant}
the following.

\begin{cor}\label{cor:polarConv}
Suppose $\mu_n \to \mu_0$ in $\bfM(\Rplus^p \setminus\{0_p\})$. Then 
$$\mu_n \circ \polar^{-1} \to \mu_0 \circ \polar^{-1} 
$$ in $\bfM( (0,\infty)\times \aleph)$.
\end{cor}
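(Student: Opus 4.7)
The plan is to apply Corollary \ref{cor:2ndVariant} with the identifications $\bfS=\Rplus^p$, $\bfC=\{0_p\}$, $\bfO=\Rplus^p\setminus\{0_p\}$, $\bfS'=[0,\infty)\times\aleph$, $\bfC'=\{0\}\times\aleph$, $\bfO'=(0,\infty)\times\aleph$, and $h=\polar$ given by \eqref{eq:polarDef}. The two key inputs are that $\bfC=\{0_p\}$ is (trivially) compact, and that $\polar$ is continuous on $\bfO$, since the Euclidean norm is continuous and strictly positive on $\bfO$, so the normalization $x\mapsto x/\|x\|$ is continuous into $\aleph$. In particular the discontinuity set of $\polar$ on $\bfO$ is empty, so the $\mu_0$-null discontinuity requirement of Theorem \ref{mapthm} is automatic.

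Next I would verify the auxiliary hypothesis inherited from Theorem \ref{mapthm}, namely that $\polar^{-1}(A')$ is bounded away from $\bfC$ whenever $A'\subset\bfO'$ is bounded away from $\bfC'$. Equipping $\bfS'$ with the natural product metric, one has $d'((r,a),\bfC')=r$ for every $(r,a)\in\bfO'$ (since $a\in\aleph$ realizes zero angular distance to itself). Consequently, $d'(A',\bfC')\geq\delta>0$ forces $r\geq\delta$ for every $(r,a)\in A'$, and hence $\|x\|\geq\delta$ for every $x\in\polar^{-1}(A')$, so $d(\polar^{-1}(A'),\{0_p\})\geq\delta$. With the discontinuity and bounded-away conditions in hand, Corollary \ref{cor:2ndVariant} (or Theorem \ref{mapthm} applied directly) yields $\mu_n\circ\polar^{-1}\to\mu_0\circ\polar^{-1}$ in $\bfM(\bfO')=\bfM((0,\infty)\times\aleph)$.

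The only mild obstacle worth flagging is that $\polar$ is not literally defined at $0_p$, so the phrasing ``$h:\bfS\to\bfS'$ continuous'' in Corollary \ref{cor:2ndVariant} does not apply verbatim. I would handle this by extending $\polar$ to $0_p$ with any value in $\bfC'$, say $\polar(0_p):=(0,a_0)$ for a fixed $a_0\in\aleph$; inspecting the proof of Corollary \ref{cor:2ndVariant}, continuity of the extension at $0_p$ is never needed---only that $x_n\to 0_p$ forces $\polar(x_n)$ into every neighborhood of $\bfC'$, which is immediate from $\|x_n\|\to 0$. Alternatively, one can bypass Corollary \ref{cor:2ndVariant} altogether and invoke Theorem \ref{mapthm}, which only requires $h$ defined on $\bfO$; the computation above then delivers the conclusion without any extension.
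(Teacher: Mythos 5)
Your proposal is correct and follows essentially the same route as the paper, which likewise deduces the corollary from Corollary \ref{cor:2ndVariant} using compactness of $\bfC=\{0_p\}$ and continuity of $\polar$ on the domain $\bfO$ (the paper addresses the non-definition of $\polar$ at $0_p$ exactly as you do, in the discussion following Theorem \ref{mapthm}). Your explicit verification that $d'(A',\bfC')\geq\delta$ forces $\|x\|\geq\delta$ on $\polar^{-1}(A')$ matches the computation the paper carries out in Example \ref{eg:polar}.
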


When removing more from the state space than just $\{0_p\}$, the
conventional polar coordinate transform \eqref{eq:polarDef} is not useful
if $\aleph $ is not compact, or at least bounded away from what is
removed. For example, if $\bfS\setminus\bfC=(0,\infty)^p$, $\aleph $ is not compact nor bounded away from the removed axes.  
The following generalization \citep{das:mitra:resnick:2013} sometimes
resolves this, provided \eqref{eq:metricScales} below holds. 

{Temporarilly, we proceed generally and assume $\bfS$ is a complete,
separable metric space and that scalar multiplication 
is defined.  If} $\bfC $ is a cone,  $\theta \bfC=\bfC$ for
$\theta \geq 0$. Suppose further that the metric on $\bfS$ satisfies
\begin{equation}\label{eq:metricScales}
d(\theta x, \theta y)=\theta d(x,y),\quad \theta \geq 0, \,(x,y)\in
\bfS \times \bfS.
\end{equation}
Note \eqref{eq:metricScales} holds for a Banach space where distance
is defined by a norm. (It does not hold for $\R_+^\infty$.) 
If we intend to remove the closed cone $\bfC$, 
set $$\aleph_\bfC =\{s \in \bfS\setminus \bfC: d(s,\bfC)=1\},$$ which
plays the role of the unit sphere and $ \bfC'=\{0\}\times
\aleph_\bfC$ is closed.
Define the generalized transform polar coordinate transformation $$\gpolar : \bfS\setminus
\bfC \mapsto (0,\infty) \times \aleph_{\bfC}
=[0,\infty)\times \aleph_\bfC \setminus \Bigl( \{0\}\times
\aleph_\bfC\Bigr) =
\bfS'\setminus \bfC'$$ 
by
\begin{equation}\label{eq:gpolarDef}
\gpolar (s)=\bigl(d(s,\bfC), s/d(s,\bfC)\bigr), \quad s \in \bfS
\setminus \bfC .\end{equation}
Since $\bfC$ is a cone and $d(\cdot,\cdot)$ has property
\eqref{eq:metricScales}, we have for any $s \in \bfS\setminus \bfC$
that 
$$d\Bigl(\frac{s}{d(s,\bfC)},\bfC\Bigr)=d\Bigl(\frac{s}{d(s,\bfC)},\frac{1}{d(s,\bfC)}\bfC\Bigr)
=\frac{1}{d(s,\bfC)} d(s,\bfC)=1,
$$
so the second coordinate of $\gpolar$ belongs to $\aleph_\bfC$. For
example, if $\bfS=\Rplus^2$ and we remove the cone consisting of
the axes through $0_2$, that is,
$\bfC=\{0\}\times [0,\infty) \cup [0,\infty)\times \{0\}$, then
$\aleph_\bfC=\{x \in \Rplus^2: x_1\wedge x_2=1\}$. The inverse map
$\gpolar{^{-1}} : (0,\infty)\times \aleph_\bfC \mapsto
\bfS\setminus \bfC$ is 
$$\gpolar{^{-1}} (r,a)=r a,\quad r\in (0,\infty),\,a\in\aleph_\bfC.$$

It is relatively easy to check that if ${A'} \subset (0,\infty)\times
\aleph_\bfC $ is bounded away from $\bfC'=\{0\}\times \aleph_\bfC$,
then
$\gpolar^{-1}({A'}) $ is bounded away from $\bfC$. On
$(0,\infty)\times \aleph_{{\bfC}}$ adopt the metric 
$$d'\bigl(
(r_1,a_1),(r_2,a_2)\bigr)=|r_1-r_2|\vee d_{\aleph_\bfC}
(a_1,a_2),$$
where
$d_{\aleph_\bfC}(a_1,a_2)$ is an appropriate metric on $\aleph_\bfC$. Suppose
$d'({A'}, \{0\}\times \aleph_{\bfC}) =\epsilon >0.$  This means 
\begin{align*}
\epsilon=&\inf_{
\substack{(r_1,a_1)\in {A'}\\ a_2 \in \aleph_\bfC}} 
d'\bigl((r_1,a_1), (0,a_2)\bigr)
\end{align*}
and setting $a_2=a_1$ this is $\inf_{(r_1,a_1) \in \bfD'} r_1$. 
We conclude that $(r,a)\in {A'}$ implies $r\geq
\epsilon$. Since $\gpolar^{-1}({A'})=\{r a : (r,a) \in {A'}\}, $
we have in $\bfS\setminus \bfC$, remembering that $\bfC$ is assumed to
be a cone,
\begin{align*}
d(\{r a:(r,a)\in {A'}\}, \bfC)
&=\inf_{(r,a) \in {A'}} d(r a,\bfC)=\inf_{(r,a) \in {A'}}
d(r a,r \bfC)\\
&=\inf_{(r,a) \in {A'}} r d(a,\bfC) \geq \epsilon \cdot 1.
\end{align*}
The last line uses \eqref{eq:metricScales} and the definition of
$\aleph_{\bfC}$.

The hypotheses of Theorem \ref{mapthm} are verified so we get the
following conclusion about $\gpolar$.
\begin{cor}\label{cor:gpolarCont}
Suppose $\bfS $ is a complete, separable metric space such that
\eqref{eq:metricScales} holds, scalar multiplication is defined
 and supposed $\bfC $ is a
closed cone. Then $\mu_n \to \mu_0$ in $\bfM(\bfS\setminus \bfC)$
implies
$$\mu_n\circ \gpolar^{-1} \to \mu_0\circ \gpolar^{-1} $$
in 
$\bfM(  (0,\infty)\times \aleph_\bfC)$. The converse holds as well.
\end{cor}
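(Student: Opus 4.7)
The plan is to deduce both directions from the mapping Theorem \ref{mapthm}, by showing that $\gpolar : \bfO \to \bfO'$ is in fact a homeomorphism and that both $\gpolar$ and $\gpolar^{-1}$ pull back sets bounded away from the deleted cone to sets bounded away from the deleted cone. The preimage condition for $\gpolar$ was already verified in the paragraphs preceding the statement, so the remaining tasks are: continuity of $\gpolar$ and of $\gpolar^{-1}$, plus the preimage condition for $\gpolar^{-1}$ that the converse direction requires.

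For continuity of $\gpolar$, the map $s \mapsto d(s,\bfC)$ is uniformly continuous by Lemma \ref{lem:distance}(i) and is strictly positive on $\bfO$ by Lemma \ref{lem:distance}(ii). Since $\lambda \mapsto 1/\lambda$ is continuous on $(0,\infty)$, combining with (A1) shows that $s \mapsto (1/d(s,\bfC))\cdot s = s/d(s,\bfC)$ is continuous on $\bfO$, and hence $\gpolar$ is continuous into the product metric $d'$. The inverse $\gpolar^{-1}(r,a)=ra$ is the restriction of scalar multiplication to $(0,\infty)\times\aleph_\bfC$, continuous by (A1). The identities $\gpolar\circ\gpolar^{-1}=\mathrm{id}$ and $\gpolar^{-1}\circ\gpolar=\mathrm{id}$ are a direct consequence of (A2) and the cone/scaling identity \eqref{eq:metricScales}: given $(r,a)\in(0,\infty)\times\aleph_\bfC$, the fact that $\bfC$ is a cone combined with \eqref{eq:metricScales} gives $d(ra,\bfC)=d(ra,r\bfC)=r\,d(a,\bfC)=r$, whence $\gpolar(ra)=(r,a)$. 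Thus $\gpolar$ is a homeomorphism $\bfO \to \bfO'$.

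For the preimage condition needed in the converse, suppose $A\subset\bfO$ with $d(A,\bfC)\geq\delta>0$. Then every $s\in A$ satisfies $d(s,\bfC)\geq\delta$, so the first coordinate of $\gpolar(s)$ is at least $\delta$, which forces $d'(\gpolar(s),(0,a))\geq\delta$ for every $a\in\aleph_\bfC$. Hence $\gpolar(A)$ is bounded away from $\bfC'=\{0\}\times\aleph_\bfC$. With this in place, Theorem \ref{mapthm} applied with $h=\gpolar$ produces $\mu_n\circ\gpolar^{-1}\to\mu_0\circ\gpolar^{-1}$ from $\mu_n\to\mu_0$, and applied with $h=\gpolar^{-1}$ to $\nu_n:=\mu_n\circ\gpolar^{-1}$ produces $\nu_n\circ\gpolar\to\nu_0\circ\gpolar$; using $\gpolar^{-1}\circ\gpolar=\mathrm{id}$ the latter is exactly $\mu_n\to\mu_0$, giving the converse.

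The most delicate step is the continuity of the second coordinate $s\mapsto s/d(s,\bfC)$ of $\gpolar$, and ensuring its image really lies on the ``unit sphere'' $\aleph_\bfC$. This is where all the structural assumptions are used at once: (A1) to get joint continuity of scalar multiplication, Lemma \ref{lem:distance} to make $d(\cdot,\bfC)$ a strictly positive continuous denominator on $\bfO$, and the scaling identity \eqref{eq:metricScales} combined with the cone hypothesis on $\bfC$ to guarantee that $d(s/d(s,\bfC),\bfC)=1$.
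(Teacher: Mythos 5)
Your proposal is correct and follows essentially the same route as the paper: the paper also deduces both directions from the mapping Theorem \ref{mapthm}, having verified in the preceding paragraphs that $\gpolar^{-1}$ pulls sets bounded away from $\bfC'=\{0\}\times\aleph_\bfC$ back to sets bounded away from $\bfC$ via the cone property and \eqref{eq:metricScales}, and it dispatches the converse with ``proven in a similar way.'' You simply make explicit the details the paper leaves implicit --- continuity of $s\mapsto s/d(s,\bfC)$ via Lemma \ref{lem:distance} and (A1), the identity $d(ra,\bfC)=r$ showing $\gpolar$ is a homeomorphism, and the preimage condition for the inverse map --- all of which are correct.
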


The converse is proven in a similar way.

{\bf Remark} on \eqref{eq:metricScales}: As mentioned,  \eqref{eq:metricScales}
 holds if the metric is defined by a norm  on the space $\bfS$.
Thus, \eqref{eq:metricScales} holds for a Banach space and on
$\Rplus^p$, $\mathbb{C}[0,1]$ with sup-norm and $\bfD([0,1], \R)$ with Skorohod
metric. It fails in $\Rplus^\infty$.

\subsection{Reducing $\Rplus^\infty$ convergence to finite dimensional
  convergence.}\label{subsec:reduce}
Corollary \ref{cor:projConv} shows when convergence in 
$\bfM(\Rplus^\infty \setminus \bfC)$ implies convergence in 
$\bfM(\Rplus^p \setminus \projp(\bfC))$. Here is a circumstance where
the converse is true and reduces the problem of convergence in
infinite dimensional space to finite dimensions.

\begin{thm}\label{th:fidiConvEnough}
Supppse for every $p\geq 1$, that the closed set $\bfC 
\subset \Rplus^\infty $
 satisfies $\projp(\bfC)$ is closed
in $\Rplus^p$  and 
\begin{equation}\label{eq:weirdCondit}
(z_1,\dots,z_p)\in \projp(\bfC) \quad \text{implies} \quad
(z_1,\dots,z_p,0_\infty) \in \bfC.
\end{equation}
Then
$\mu_n \to \mu_0$ in $\bfM(\Rplus^\infty \setminus \bfC)$ if and only if for all
$p\geq 1$ such that $\Rplus^p\setminus \projp(\bfC) \neq \emptyset$
we have
\begin{equation}\label{eq:fidiConv}
\mu_n \circ \projp^{-1} \to \mu_0\circ \projp^{-1} \end{equation}
 in 
$\bfM(\Rplus^p \setminus \projp(\bfC))$.
\end{thm}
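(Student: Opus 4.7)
The plan is to handle the two implications separately. The forward implication, that $\mu_n\to\mu_0$ in $\bfM(\Rplus^\infty\setminus\bfC)$ entails the projected convergence for every $p$, is immediate from Corollary \ref{cor:projConv} once one verifies that the hypotheses on $\bfC$ guarantee $\projp$ satisfies the conditions of that corollary. So the whole content is in the converse.

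For the converse, I would invoke Theorem \ref{portthm}(ii) and reduce the problem to showing $\int f\,\ud\mu_n\to\int f\,\ud\mu_0$ for each bounded, uniformly continuous $f\in\C_\bfO$ vanishing on $\bfC^r$ for some $r>0$, with modulus of continuity $\omega$. The key device is the finite-dimensional approximant
\begin{equation*}
f_p(y_1,\dots,y_p):=f(y_1,\dots,y_p,0_\infty),
\end{equation*}
where $f$ is extended by $0$ to $\bfC$. Because $\dinfty(x,(x_{|p},0_\infty))\leq 2^{-p}$ for every $x$, uniform continuity gives
\begin{equation*}
\sup_{x}\,|f(x)-f_p(\projp(x))|\leq \omega(2^{-p}).
\end{equation*}
Hypothesis \eqref{eq:weirdCondit} together with the closedness of $\projp(\bfC)$ is exactly what is needed to show that $f_p$ vanishes on some $\projp(\bfC)^\delta$, so that $f_p\in\C(\Rplus^p\setminus\projp(\bfC))$; indeed, if $y$ lies within $\delta$ of $z\in\projp(\bfC)$ in a metric equivalent to $\dinfty$ restricted to the image of $\Rplus^p$, then $(z,0_\infty)\in\bfC$ and $(y,0_\infty)\in\bfC^\delta$, forcing $f_p(y)=0$ when $\delta<r$. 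The assumed finite-dimensional convergence then yields
\begin{equation*}
\int f_p\circ\projp\,\ud\mu_n=\int f_p\,\ud(\mu_n\circ\projp^{-1})\longrightarrow\int f_p\,\ud(\mu_0\circ\projp^{-1}).
\end{equation*}

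To close the triangle argument I need a uniform bound on the mass $\mu_n$ assigns to the support of $f-f_p\circ\projp$. A triangle-inequality argument shows this support is contained in $\Rplus^\infty\setminus\bfC^{r-2^{-p}}$, which for $p$ large sits inside the fixed region $\Rplus^\infty\setminus\bfC^{r/2}$. The same philosophy used in verifying $f_p$ vanishes near $\projp(\bfC)$—namely, for any $x$ with $d(x,\bfC)\geq r/2$, a suitable choice of $q$ with $2^{-q}<r/4$ forces $\projq(x)$ to be at distance at least $r/4$ from $\projq(\bfC)$—shows
\begin{equation*}
\mu_n(\Rplus^\infty\setminus\bfC^{r/2})\leq (\mu_n\circ\projq^{-1})(\Rplus^q\setminus\projq(\bfC)^{r/4}),
\end{equation*}
and the right-hand side is bounded uniformly in $n$ because the convergent sequence $\mu_n\circ\projq^{-1}$ is tight on the set $\Rplus^q\setminus\projq(\bfC)^{r/4}$, which is bounded away from $\projq(\bfC)$. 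Combining the three pieces and letting first $n\to\infty$, then $p\to\infty$, gives the conclusion.

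The main obstacle I anticipate is the one that the hypotheses are actually designed for: showing that the projected approximant $f_p$ lies in $\C_{\bfO_p}$, i.e., that it vanishes on a genuine neighborhood of $\projp(\bfC)$. This is the only place where \eqref{eq:weirdCondit} and the closedness of $\projp(\bfC)$ are indispensable, and it must be written carefully since one is juggling the $\dinfty$ metric, its restriction to the image of $\Rplus^p$, and an arbitrary equivalent metric on $\Rplus^p$. Everything else—the uniform approximation $|f-f_p\circ\projp|\leq\omega(2^{-p})$, the localization of its support, and the uniform mass bound—follows by fairly standard triangle inequality manipulations specific to the metric $\dinfty$.
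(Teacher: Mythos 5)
Your proposal is correct and follows essentially the same route as the paper's proof: the forward direction via Corollary \ref{cor:projConv}, and for the converse the same approximant $f_p(y)=f(y,0_\infty)$ with the bound $|f(x)-f_p(\projp(x))|\leq\omega(2^{-p})$, the same use of \eqref{eq:weirdCondit} to show $f_p$ vanishes on a neighborhood of $\projp(\bfC)$, and the same three-term decomposition with a uniform mass bound on a region bounded away from $\bfC$ obtained from the finite-dimensional convergence. The only cosmetic difference is that you pull the mass bound back through a fixed projection $\projq$ and invoke the portmanteau upper bound for closed sets, whereas the paper bounds $\mu_n(\Lambda^c)$ directly; these are the same idea.
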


{\bf Remark} on condition \eqref{eq:weirdCondit}: The condition says
take an infinite sequence $z $ in $\bfC$, truncate it to $z_{|p}\in \Rplus^p$,
and then make it infinite again by filling in zeros for all the
components beyond the $p$th. The result must still be in
$\bfC$. Examples:
\begin{enumerate}
\item $\bfC=\{0_\infty\}$.
\item Pick an integer $j \geq 1$ and define
\begin{equation}\label{eq:Fj}
\bfC_{\leq j}=\{x \in \Rplus^\infty: \sum_{i=1}^\infty \epsilon_{x_i}
(0,\infty) \leq j\}, \end{equation}
where recall
$\epsilon_x (A)=1$,   if $x \in A,$ and $0,$  if $x \in A^c$.
So $\bfC_{\leq j}$ consists of sequences with at most $j$
positive components. Truncation and then insertion of zeros does not
increase the number of positive components so $\bfC_{\leq j}$ is invariant
under the operation implied by \eqref{eq:weirdCondit}.
\end{enumerate}

\begin{proof}
Suppose $\bfC$ satisfies \eqref{eq:weirdCondit} and 
\eqref{eq:fidiConv} holds. Suppose $f \in \C(\Rplus^\infty \setminus
\bfC)$ and without loss of generality suppose $f$ is uniformly
continuous with modulus of continuity
$$\omega_f(\eta)=\sup_{\substack{(x,y)\in \Rplus^\infty \setminus
    \bfC\\d_\infty(x,y)<\eta}}  |f(x)-f(y)|.
$$
There exists $1>\delta>0$ such that $d_\infty(x,\bfC)<\delta$ implies
$f(x)=0$. Observe,
\begin{equation}\label{eq:missing}
d_\infty \bigl((x_{|p},0_\infty), x\bigr) \leq
\sum_{j=p+1}^\infty 2^{-j} =2^{-p}.\end{equation}
Pick any $p$ so large that $2^{-p}<\delta/2 $ and define
$$g(x_1,\dots,x_p)=f({x_1,\dots,x_p},0_\infty).$$
Then we have 
\begin{enumerate}
\item[(a)] From \eqref{eq:missing},
  $$|f(x)-g(x_{|p})|=|f(x)-f(x_{|p},0_\infty)|\leq
  \omega_f(2^{-p}).$$
\item[(b)] $g \in \C(\Rplus^p \setminus \projp (\bfC) )$ and $g$ is
  uniformly continuous.
\end{enumerate}
To verify that the support of $g$ is positive distance  away from
$\projp(\bfC)$, suppose $d_p$ is the $L_1$ metric on $\Rplus^p$  and
$d_p\bigl((x_1,\dots,x_p),\projp(\bfC)\bigr) <\delta/2.$  Then there
is $(z_1,\dots,z_p) \in \projp(\bfC)$ such that 
$d_p\bigl(((x_1,\dots,x_p),(z_1,\dots,z_p) \bigr)<\delta$.  But then
if $z \in \bfC$ with $z_{|p}=(z_1,\dots,z_p) $, we have, since
$(z_1,\dots,z_p,0_\infty) \in \bfC$ by \eqref{eq:weirdCondit},
\begin{align*}
d_\infty  &\bigl( (x_1,\dots,x_p,0_\infty),
(z_1,\dots,z_p,0_\infty)\bigr) 
=\sum_{i=1}^p \frac{|x_i-z_i|\wedge 1}{2^i}\\& \leq \sum_{i=1}^p
|x_i-z_i|\wedge 1 \leq \sum_{i=1}^p
|x_i-z_i| \\&=d_p\bigl( (x_1,\dots,x_p), (z_1,\dots,z_p)\bigr) <\delta,
\end{align*}
and therefore $d_p\bigl((x_1,\dots,x_p),\projp(\bfC)\bigr) <\delta/2$ implies
$$g(x_1,\dots,x_p)=f(x_1,\dots,x_p,0_\infty) =0.$$
So the support of $g$ is bounded away from $\projp(\bfC)$ as claimed.

Now write
\begin{align}
\mu_n(f)-\mu_0(f)&=[\mu_n(f)-\mu_n(g\circ \projp )] 
+[\mu_n(g\circ \projp ) -\mu_0(g\circ \projp )]\nonumber \\
&+[\mu_0(g\circ \projp )- \mu_0(f)] =A+B+C. \label{eq:decomp}
\end{align} 
From \eqref{eq:fidiConv}, since $g\circ \projp \in
\C(\Rplus^p\setminus \projp(\bfC))$, we have
$$B=\mu_n(g\circ \projp ) -\mu_0(g\circ \projp ) \to 0$$ as $n\to\infty.$

How to control A? For $x \in \Rplus^\infty \setminus \bfC$, if 
$d_\infty \bigl( (x_1,\dots,x_p,0_\infty),\bfC\bigr)< \delta/2$,
then $f((x_1,\dots,x_p,0_\infty) =0$ and also
$d_\infty \bigl(x,\bfC) \leq d_\infty (x, (x_{|p},0_\infty)
\bigr) + d_\infty \bigl((x_{|p},0_\infty),\bfC\bigr)
<2^{-p}+\delta/2<\delta $ so $f(x)=0.$
Therefore, on 
$$\Lambda =\{x \in \Rplus^\infty \setminus \bfC: d_\infty \bigl( (x_{|p},0),\bfC\bigr)<\delta/2\}
$$
both $f$ and $g\circ \projp $ are zero. Set 
$$\Lambda^c =(\Rplus^\infty
\setminus \bfC) \setminus \Lambda =\{x \in \Rplus^\infty \setminus
\bfC: d_\infty \bigl( (x_{|p},0),\bfC\bigr)\geq \delta/2\}.
$$ Then we have
\begin{align*}
|\mu_n(f)-\mu_n(g\circ \projp ) |\leq & \int |f-g\circ \projp | d\mu_n\\
= & \int_{\Lambda^c} |f-g\circ \projp | d\mu_n\\
\leq & \mu_n (\Lambda^c) \omega_f(2^{-p}),
\end{align*} and similarly for dealing with term C, we would have $
|\mu_0(f)-\mu_0(g\circ \projp ) | \leq \mu_0 (\Lambda^c) \omega_f(2^{-p})$.

Owing to finite dimensional convergence \eqref{eq:fidiConv} and
\eqref{eq:decomp}, we have
$$\limsup_{n\to\infty} |\mu_n(f) -\mu_0(f)| \leq 2
\omega_f(2^{-p})\mu_0(\Lambda^c) +0.$$ Since $\Lambda^c$ is bounded
away from $\bfC$, $\mu_0(\Lambda^c)<\infty$ and since the inequality
holds for any $p$ sufficiently large such that $2^{-p}<\delta, $ we
may let $p\to\infty$ to get $\mu_n(f) \to \mu_0(f).$
\end{proof}

{\bf Remark:} The proof shows \eqref{eq:fidiConv} only needs to hold
for all $p\geq p_0$. For example, if 
$$
\bfC=\bfC_j = \{x \in \Rplus^\infty: \sum_{i=1}^\infty
\epsilon_{x_i}(0,\infty) \leq j\},
$$ then
$$\projp(\bfC_j) = \{(x_1,\dots,x_p) \in \Rplus^p: \sum_{i=1}^p
\epsilon_{x_i}(0,\infty) \leq j\},$$
and for $p<j,$ $\projp (\bfC_j) =\Rplus^p$ and 
$\Rplus^p \setminus \projp (\bfC_j) =\emptyset. $  However, it
suffices for the result to hold for all $p\geq j$.

\subsection{Comparing $\bfM$-convergence on $\bfS\setminus \bfC$ with
  vague convergence when $\bfS$ is compactified.}\label{subsub:compareCompact}

This continues the discussion of Section \ref{subsec:MvsVague}.  Conventionally
\citep{resnickbook:2007} regular variation on $[0,\infty)^p$ has been defined on
the punctured compactified space $[0,\infty]^p\setminus
\{0_p\}$. 
This solves the problem of how to make tail regions relatively
compact. However, as discussed in 
\cite{das:mitra:resnick:2013},
 when deleting more than $\{0_p\}$, this 
approach causes problems with the convergence to types lemma and also
because certain natural regions are no longer relatively compact. 
The issue arises when
there is mass on the lines through $\infty_p$, something that is
impossible for regular variation on $[0,\infty]^p \setminus
\{0_p\}$. The following discussion amplifies what is in 
\cite{das:mitra:resnick:2013}.

Suppose $\bfC $ is closed in ${[0,\infty]^p}$ and set 
$$\bfC_0=\bfC\cap [0,\infty)^p,\quad \Omega=[0,\infty]^p \setminus \bfC,
\quad \Omega_0=[0,\infty)^p \setminus \bfC_0.$$
Examining the definitions we see that,
\begin{itemize}
\item $\Omega_0\subset \Omega.$
\item $\Omega\setminus \Omega_0={\bfC}^c \cap \bigl( [0,\infty]^p\setminus
[0,\infty)^p\bigr)$
\end{itemize}

\begin{prop}\label{prop:vagueAndM}
Suppose for every $n\geq 0$ that $\mu_n \in \bfM_+(\Omega)$ and $\mu_n
$ places no mass on the lines through $\infty_p$:
\begin{align}
&\mu_n(\bigl([0,\infty]^p \setminus [0,\infty)^p\bigr)\cap \bfC^c)=0. \label{eq:noMassCond}\\
 \intertext{Then}
&\mu_n \stackrel{v}{\to} \mu_0 \quad \text{ in }
\bfM_+(\Omega),\label{eq:vagueNomass}\\
\intertext{if and only if the restrictions to the space without the lines through
  $\infty_p$ converge:}
&\mu_{n0}:=\mu_n(\cdot \cap \Omega_0) \to  \mu_0(\cdot \cap \Omega_0)=:\mu_{00} \quad \text{
  in }
\bfM(\Omega_0). \label{eq:MconvNomass}
\end{align}
\end{prop}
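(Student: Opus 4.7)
My strategy is to use the Portmanteau characterization of $\bfM$-convergence (Theorem \ref{portthm}) in one direction and a duality-of-integrals argument in the other, interposing a truncation at finite Euclidean coordinates to bridge the compactified and non-compactified spaces. The no-mass hypothesis \eqref{eq:noMassCond} will enter exactly when we need to trade restrictions back and forth between $\mu_n$ and $\mu_{n0}$.

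\textbf{Reverse direction ($\Leftarrow$).} Assume $\mu_{n0} \to \mu_{00}$ in $\bfM(\Omega_0)$ together with \eqref{eq:noMassCond}. Fix a continuous $g$ with compact support $K \subset \Omega$. By compactness, $K$ has positive distance to $\bfC$ in the compactified metric on $[0,\infty]^p$, in particular its trace $K \cap [0,\infty)^p$ is bounded in Euclidean norm and bounded away from $\bfC_0$. Hypothesis \eqref{eq:noMassCond} gives
\[
\int g\,d\mu_n \;=\; \int_{\Omega_0} g\,d\mu_{n0} \qquad (n\ge 0),
\]
and $g\restriction_{\Omega_0}$ is bounded, continuous, and has support bounded away from $\bfC_0$, hence lies in $\C_{\Omega_0}$. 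Definition of $\bfM$-convergence in $\bfM(\Omega_0)$ yields $\int g\,d\mu_{n0} \to \int g\,d\mu_{00}$, and thus $\mu_n \stackrel{v}{\to} \mu_0$.

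\textbf{Forward direction ($\Rightarrow$).} Assume vague convergence in $\bfM_+(\Omega)$ and \eqref{eq:noMassCond}. Using Theorem \ref{portthm}(ii), it suffices to check $\int f\,d\mu_{n0} \to \int f\,d\mu_{00}$ for every bounded, uniformly continuous $f$ on $\Omega_0$ whose support lies at positive Euclidean distance $\delta>0$ from $\bfC_0$. Introduce a continuous cutoff $\chi_M:[0,\infty]^p \to [0,1]$ with $\chi_M \equiv 1$ on $[0,M]^p$ and $\chi_M \equiv 0$ on $\{x:\max_i x_i \ge M+1\}$ (including the lines at infinity). Extend $f\chi_M$ by zero to $\Omega$; continuity of this extension is immediate since $\chi_M$ vanishes on the lines at infinity, and the extension has support bounded in Euclidean norm and bounded away from $\bfC_0$, hence compactly contained in $\Omega$. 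By \eqref{eq:noMassCond},
\[
\int \widetilde{f\chi_M}\,d\mu_n \;=\; \int f\chi_M\,d\mu_{n0},
\]
and vague convergence yields, for each fixed $M$,
\[
\lim_{n\to\infty}\int f\chi_M\,d\mu_{n0} \;=\; \int f\chi_M\,d\mu_{00}.
\]

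\textbf{Main obstacle: controlling the tails uniformly in $n$.} To finish I must show $\limsup_{M\to\infty}\limsup_{n\to\infty}\int f(1-\chi_M)\,d\mu_{n0} = 0$, and similarly for $\mu_{00}$. For $\mu_{00}$ this follows by dominated convergence since $f(1-\chi_M)\downarrow 0$ pointwise on $\Omega_0$ (no finite point has $\max_i x_i = \infty$), provided $\mu_{00}(\mathrm{supp}\,f)<\infty$, which holds because $\mathrm{supp}\,f$ is bounded away from $\bfC_0$ and $\mu_{00}\in\bfM(\Omega_0)$. For $\mu_{n0}$, I would pick a continuous $\psi_M$ on $[0,\infty]^p$ that dominates $\mathbf{1}_{\mathrm{supp}\,f}(1-\chi_M)$ on $\Omega_0$ and extends continuously to zero both on $\bfC^{\delta/2}$ and on the lines at infinity (using a Urysohn function built from $d(\cdot,\bfC_0)$ and a second cutoff at level $M-1$); this $\psi_M$ has compact support in $\Omega$, so $\int\psi_M\,d\mu_n \to \int\psi_M\,d\mu_0$, and the latter tends to $0$ as $M\to\infty$ by the $\mu_0$-argument above. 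Combining,
\[
\limsup_{n}\int f(1-\chi_M)\,d\mu_{n0} \;\le\; \|f\|_\infty\,\mu_0(\psi_M),
\]
which vanishes as $M\to\infty$. Adding and subtracting $\int f\chi_M\,d\mu_{n0}$ and $\int f\chi_M\,d\mu_{00}$ then gives the required convergence. The boundary/continuity selection of $M$ (e.g.\ so that $\mu_0$ puts no mass on $\{\max_i x_i = M\}$) can be arranged for all but countably many $M$ by Lemma \ref{lem:uncountableunion}.
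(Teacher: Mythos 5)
Your reverse direction is essentially the paper's argument: restrict a test function $g\in\C_K^+(\Omega)$ to $\Omega_0$, check it lies in $\C(\Omega_0)$, and use \eqref{eq:noMassCond} to identify $\mu_n(g)$ with $\mu_{n0}(g)$. One claim there is false but harmless: a compact $K\subset\Omega$ need \emph{not} have Euclidean-bounded trace $K\cap[0,\infty)^p$, since compacta of $\Omega$ may contain points on the lines through $\infty_p$ that avoid $\bfC$. What you actually need, and what does hold, is only that $K\cap[0,\infty)^p$ is Euclidean-bounded away from $\bfC_0$ (separation in the compactified metric implies Euclidean separation).

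The forward direction is where you depart from the paper -- which argues on sets, passing from the vague Portmanteau to Theorem \ref{portthm}(iv) -- and it contains a genuine gap: the majorant $\psi_M$ you describe cannot exist in general. A function $f\in\C(\Omega_0)$ need only have support Euclidean-bounded away from $\bfC_0$; that support may be unbounded, so the set where you require $\psi_M\geq 1$, namely $\mathrm{supp}(f)\cap\{x:\max_i x_i\geq M+1\}$, can accumulate on the lines through $\infty_p$ in the compact space $[0,\infty]^p$. A continuous $\psi_M$ on $[0,\infty]^p$ that is $\geq 1$ on such a set is forced to be $\geq 1$ somewhere on the lines at infinity, contradicting your requirement that it vanish there. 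Dropping that requirement does not rescue the step: to invoke vague convergence you need $\mathrm{supp}(\psi_M)$ to be a compact subset of $\Omega$, i.e.\ its closure in $[0,\infty]^p$ must miss $\bfC$, and this fails precisely when the closure of $\mathrm{supp}(f)$ meets $\bfC$ on the lines through $\infty_p$. This obstruction is real, not presentational: take $p=2$, $\bfC$ the closed diagonal of $[0,\infty]^2$, and $\mu_n=\epsilon_{(n,n+1)}$. Then $\mu_n\stackrel{v}{\to}0$ in $\bfM_+(\Omega)$ and \eqref{eq:noMassCond} holds, yet $\mu_{n0}(\{|x_1-x_2|\geq 1\})=1$ for all $n$, so $\mu_{n0}\not\to 0$ in $\bfM(\Omega_0)$. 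Your tail estimate (and the forward implication itself) therefore needs the additional geometric input that subsets of $\Omega_0$ bounded away from $\bfC_0$ have closures in $[0,\infty]^p$ disjoint from $\bfC$ -- true for the coordinate-subspace cones used elsewhere in the paper, and used implicitly when the paper applies the vague Portmanteau to such sets. You should add this hypothesis (or verify it in the intended setting) before the truncation argument can be closed.
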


\begin{proof}
Given \eqref{eq:MconvNomass}, let $f\in \C_K^+(\Omega)$. Then the
restriction to $\Omega_0$ satisfies
$f|_{\Omega_0} \in \C(\Omega_0)$ so $$\mu_n(f)=\mu_{n0}(f|_{\Omega_0} )
\to \mu_{00}(f|_{\Omega_0} )=\mu_0(f),$$
so $\mu_n\stackrel{v}{\to} \mu_0$
 in $M_+(\Omega)$.

Conversely, assume \eqref{eq:vagueNomass}. Suppose $B \in
\scrS(\Omega_0)$ and $\mu_{00}(\partial_{\Omega_0} B)=0$, where
$\partial_{\Omega_0} B$ is the set of boundary points of $B$ in
$\Omega_0$. This implies $\mu_0(\partial_\Omega B)=0$ since
$$\partial_\Omega (B) \subset \partial_{\Omega_0} B \cup \Bigl(\bigl(
\Omega\setminus \Omega_0\bigr) \cap \bfC\Bigr).$$
Therefore $\mu_n(B) \to \mu_0(B)$ and because of
\eqref{eq:noMassCond}, 
$\mu_{n0}(B)\to\mu_{00}(B)$ which proves \eqref{eq:MconvNomass}.
\end{proof}

\subsection{Regular variation on $\Rplus^p$ and $\Rplus^\infty$}\label{subsec:regVar} 
For this section, either $\bfS$ is  $\Rplus^p$ or $\Rplus^\infty$ and 
$\bfC$ is a closed cone; then  $\bfS \setminus \bfC$ is still a
cone. Applying Definition \ref{rvdefmeas},
a random element $X$ of $\bfS \setminus \bfC$ has a regularly
varying distribution if for some regularly varying function $b(t)
\to\infty$, as $t\to\infty$,
$$tP[X /b(t) \in \cdot\,] \to \nu(\cdot) \quad \text{ in } \bfM(\bfS
\setminus \bfC),$$
for some limit measure $\nu \in \bfM(\bfS\setminus \bfC)$. In $\Rplus^p$, if $\bfC=\{0_p\}$ or if
\eqref{eq:noMassCond} holds, this definition is the same as the one
using vague convergence on the compactified space.

\subsubsection{The iid case: remove $\{0_\infty\}$.}\label{eg:iid} 
Suppose $X=(X_1,X_2,\dots )$ is iid with non-negative components,
each of which has a regularly varying distribution on $(0,\infty)$
satisfying $$P[X_1>tx]/P[X_1>t]\to x^{-\alpha},\quad \text{ as
}t\to\infty,\,x>0,\,\alpha >0.$$
 Equivalently, as $t\to \infty$,
\begin{equation}\label{eq:1dimRV}
tF(b(t)\cdot): =tP[X_1/b(t) \in \cdot \,]\to \nu_\alpha (\cdot) \quad
\text{ in }\bfM((0,\infty)),
\end{equation}
where $\nu_\alpha(x,\infty)=x^{-\alpha},\,  \alpha>0.$
Then in $\bfM(\Rplus^\infty \setminus \{0_\infty\})$, we have
\begin{align}\label{eq:rvRinfty}
\mu_t((dx_1,dx_2,\dots )&:=tP[X/b(t) \in  (dx_1,dx_2,\dots )\,] \nonumber \\
&\to  \sum_{l=1}^\infty \prod_{i\neq l}
\epsilon_0 (dx_i) \nu_\alpha (dx_l) 
=:\mu^{(0)}(dx_1,dx_2,\dots),
\end{align}
and the limit measure concentrates on
$$\bfC_{=1}=\{x\in\Rplus^\infty: \sum_{i=1}^\infty \epsilon_{x_i}
    \bigl((0,\infty)\bigr) =1\},$$
the sequences with exactly one component positive. Note
$\{0_\infty\} \cup \bfC_{=1} =:\bfC_{\leq 1}$, the sequences
with at most one component positive, is closed.

To verify \eqref{eq:rvRinfty}, note from Theorem \ref{th:fidiConvEnough}, it suffices to verify finite dimensional
convergence since $\{0_\infty\}$ satisfies \eqref{eq:weirdCondit},
so it suffices to prove as $t\to\infty$, for $p\geq 1$,
\begin{align}
\mu_t\circ \projp^{-1} &((dx_1,\dots,dx_p)):=tP[(X_1,\dots,X_p)/b(t)
\in (dx_1,\dots,dx_p)\,]\nonumber \\
&\to \mu^{(0)} \circ 
\projp^{-1} ((dx_1,\dots,dx_p))= \sum_{l=1}^p \prod_{i\neq l}
\epsilon_0 (dx_i) \nu_\alpha (dx_l) , \label{eq:damnFiDi's}\end{align}
in $\bfM(\Rplus^p \setminus \{0_p\}$. Since neither $\mu_t\circ
\projp^{-1}$ nor $\mu^{(0)}\circ \projp^{-1}$ place mass on the lines
through $\infty_p$, $\bfM$-convergence and vague convergence are the
same and then \eqref{eq:damnFiDi's} follows from the binding lemma in
\cite[p. 228, 210]{resnickbook:2007}.

Applying the operator $\cumsum$ and  Corollary \ref{cor:cumsumRV} to \eqref{eq:rvRinfty}, gives,
\begin{align}
tP[\cumsum(X)/b(t) &\in (dx_1,dx_2,\dots )]
\to \mu^{(0)} \circ \cumsum^{-1} ((dx_1,dx_2,\dots ))  \nonumber\\
&= \sum_{l=1}^\infty \prod_{i=1}^{l-1}
\epsilon_0 (dx_i) \nu_\alpha (dx_l) \prod_{i=l+1}^\infty
\epsilon_{x_l}(dx_i) \quad \text{ in }\bfM(\Rplus^\infty \setminus \{0_\infty\}))
\label{eq:genBrei}
\end{align}
where the limit concentrates on non-decreasing sequences with one jump
and the size of the jump is governed by $\nu_\alpha$. Then applying
the operator $\projp$ we get by Corollary \ref{cor:projConv},
\begin{align}
tP[(X_1,&X_1+X_2,\dots ,\sum_{i=1}^pX_k)/b(t) \in
(dx_1,dx_2,\dots,dx_p )]\nonumber \\
&\to \mu^{(0)} \circ \cumsum^{-1} \circ \projp^{-1}((dx_1,dx_2,\dots,dx_p ))  \nonumber\\
&= \sum_{l=1}^p \prod_{i=1}^{l-1}
\epsilon_0 (dx_i) \nu_\alpha (dx_l) \prod_{i=l+1}^p
\epsilon_{x_l}(dx_i) \quad \text{ in }\bfM(\Rplus^p \setminus \{0_p\})),
\label{eq:genBreifidi}
\end{align}
giving an elaboration of the one big jump heuristic saying that summing independent risks which
have the same heavy tail results {in}
a tail risk which is the number of summands times the individual tail
risk; for example, see \cite[p. 230]{resnickbook:2007}.
In 
particular, applying the projection from $\Rplus^p \setminus\{0_p\} \mapsto (0,\infty)$ defined by $T:(x_1,\dots,x_p)\mapsto x_p$ gives by Corollary \ref{cor:variant1} that,
\begin{align}
tP[\sum_{i=1}^p X_i>b(t)x]\to & \mu^{(0)}\circ \cumsum^{-1}\circ
\projp^{-1}\circ T^{-1} (x,\infty)\label{eq:bigjumpsum}\\
=&p\nu_\alpha
(x,\infty)=px^{-\alpha}. \nonumber
\end{align}
The projection $T$ is uniformly continuous but also Theorem
\ref{mapthm} applies to $T$ since for $y>0$, 
$T^{-1}(y,\infty)=\{(x_1,\dots,x_p):x_p>y\}$ is at positive distance
from $\{0_p\}$.

The above discussion could have been carried out with minor
modifications without the iid
assumption by  assuming \eqref{eq:1dimRV} and 
$$ P[X_j>x]/P[X_1>x] \to c_j>0,\quad j\geq 2.$$

\subsubsection{The iid case; remove more; hidden regular variation.}\label{subsubsec:HRV}
We now investigate how to get past the one big jump heuristic by using
hidden regular variation.
For $j\geq 1$, set 
\begin{align}\bfC_{=j}=&\{x\in\Rplus^\infty: \sum_{i=1}^\infty \epsilon_{x_i}
    \bigl((0,\infty)\bigr) =j\},\nonumber \\
\bfC_{\leq j}=&\{x\in\Rplus^\infty: \sum_{i=1}^\infty \epsilon_{x_i}
    \bigl((0,\infty)\bigr) \leq j\} = \bfC_{\leq (j-1)} \cup \bfC_{=j} ,\label{eq:Fleq}
\end{align}
so that
$\bfC_{\leq j}$ is closed. We imagine an infinite sequence of
reductions of the state space with scaling adjusted at each step. This
is suggested by the previous discussion. On $\bfM(\Rplus^\infty 
\setminus \{0_\infty\})$, the limit measure $\mu^{(0)}$
concentrated on $\bfC_{=1}$, a small part of the potential state space. Remove $
 \{0_\infty\} \cup \bfC_{=1} =\bfC_{\leq 1}$ and on
 $\bfM(\Rplus^\infty\setminus \bfC_{\leq 1})$ seek a new convergence
 using adjusted scaling $b(\sqrt t)$. We get in
 $\bfM(\Rplus^\infty\setminus \bfC_{\leq 1})$ as $t\to\infty$,  
\begin{align}
\mu_t^{(1)}(dx_1,dx_2,\dots )&=
tP[X/b(\sqrt t) \in (dx_1,dx_2,\dots )]
\to \mu^{(1)}\bigl( (dx_1,dx_2,\dots)\bigr)\nonumber \\&
:=\sum_{l<k} \Bigl(\prod_{j \notin \{l,k\} }\epsilon_0 (dx_j)\Bigr)\nu_\alpha
  (dx_l)\nu_\alpha (dx_k)
\end{align}
which concentrates on $\bfC_{=2}$.
In general, we find that in  $\bfM(\Rplus^\infty\setminus \bfC_{\leq j})$ as $t\to\infty$,  
\begin{align}
\mu_t^{(j)}(dx_1,dx_2,&\dots )=
tP[X/b(t^{1/(j+1}) \in (dx_1,dx_2,\dots )]
\to \mu^{(j)}\bigl( (dx_1,dx_2,\dots)\bigr)\nonumber \\
&
:=\sum_{i_1<i_2<\dots <i_{j+1}} \Bigl( \prod_{j \notin \{i_1,\dots,i_{j+1}\}
}\epsilon_0 (dx_j) \Bigr)
\nu_\alpha
  (dx_{i_1}) \nu_\alpha (dx_{i_2})\dots \nu_\alpha
  (dx_{i_{j+1}})\label{eq:mutTomu}
\end{align}
which concentrates on $\bfC_{=(j+1)}$. This is an elaboration of
results in \cite{maulik:resnick:2005,
  mitra:resnick:2010a,mitra:resnick:2011hrv}. 
The result in
$\Rplus^\infty$ can be proven by reducing to $\Rplus^p$ by means of
Theorem \ref{th:fidiConvEnough} noting that $\bfC_{\leq j}$ satisfies
\eqref{eq:weirdCondit} and then observing that neither $\mu_t^{(j)}$ nor
$\mu^{(j)}$ puts mass on lines through $\infty_p$. It is enough to
show convergences of the following form: Assume $p\geq j$ and
$i_1<i_2< \dots < i_{j+1}$ and $y_l>0, \,l=1,\dots,j+1$ and
\begin{align*}
tP[X_{i_l}>b(t^{1/(j+1)}) y_l, \, l=1,\dots,j+1] &=\prod_{l=1}^{j+1}
t^{1/(j+1)}P[X_{i_l}>b(t^{1/(j+1)}) y_l]\\
& \to 
\prod_{l=1}^{j+1} \nu_\alpha (y_i,\infty)=\prod_{l=1}^{j+1}
y_l^{-\alpha}.
\end{align*}
A formal statement of the result and a proof relying on a convergence
determining class is given in the next Section
\ref{subsubsec:formal}. {Table \ref{table1} gives} a summary of the results in tabular
form.

\begin{table}
\centering
\begin{tabular}{|c|c|c|c|c|}
\hline
$j$&remove &scaling& $\mu^{(j)}$&support\\[2mm]
\hline \hline 
1& $\{0\}$&$b(t)$&$\sum_{l=1}^\infty \nu_\alpha (dx_l) \bigl[\prod_{i\neq l}
\epsilon_0(dx_i) \bigr]
 $  &axes\\[2mm]
\hline \\
2& axes & $b(\sqrt t)$ & 
$\displaystyle{\sum_{l,m} \nu_\alpha (dx_l) \nu_\alpha (dx_m)\Bigl[\prod_{i\notin \{l,m\} } }\epsilon_0(dx_i) \Bigr]
$ & 2-dim faces\\[2mm]
\hline \\
$\vdots $ &$\vdots $ &$\vdots $ &$\vdots $ &$\vdots $ \\[2mm] \hline
$m$ & $\bfC_{\leq(m-1)} $& $b(t^{\frac 1m})$ &
$\displaystyle{\sum_{(l_1,\dots,l_m)} } 
\displaystyle{\prod_{p=1}^m }\nu_\alpha (dx_{l_p}) 
\Bigl[
\displaystyle{\prod_{i\notin \{l_1,\dots,l_m\}  }}\epsilon_0(dx_i)\Bigr]
$ & $\bfC_{=m}  $\\
\hline
\end{tabular} \caption{An infinite number of coexisting regular
  variation properties.}
\label{table1}
\end{table}

Proposition
\ref{prop:cumsumCont} and Corollary \ref{cor:variant1}
allow application of $\cumsum$   to get 
\begin{align}
\mu_t^{(j)} \circ \cumsum^{-1}(dx_1,dx_2,\dots )=&
tP[\cumsum(X)/b(t^{1/(j+1}) \in (dx_1,dx_2,\dots )]\nonumber \\
\to & \mu^{(j)} \circ \cumsum^{-1}\bigl( (dx_1,dx_2,\dots)\bigr) \label{eq:oneMore}
\end{align}
in $\bfM(\cumsum(\Rplus^\infty)\setminus \cumsum(\bfC_{\leq j}))$.
Note $\cumsum(\Rplus^\infty)=:\mathbb{R}_+^{\infty\,\uparrow}$ is the
set of non-decreasing sequences and $\cumsum(\bfC_{\leq
  j})=:\bfS_{\leq j}$ is the set of 
non-decreasing sequences with at most $j$ positive jumps.
{Now apply the map $\projp$ to \eqref{eq:oneMore} 
to get a $p$-dimensional result for 
$(X_1,X_1+X_2,\dots,X_1+\dots +X_p)$ and the analogue of
 \eqref{eq:genBreifidi} is
\begin{align}
tP[(X_1,&X_1+X_2,\dots ,\sum_{i=1}^pX_k)/b(t^{1/(j+1)}) \in
(dx_1,dx_2,\dots,dx_p )]\nonumber \\
&\to \mu^{(j)} \circ \cumsum^{-1} \circ \projp^{-1}((dx_1,dx_2,\dots,dx_p ))  \label{eq:pdim}
\end{align}
in {$\bfM(\cumsum(\mathbb{R}_+^p) \setminus \projp (\bfS_{\leq j})
=\bfM(\mathbb{R}_+^{p\,\uparrow}) \setminus \projp (\bfS_{\leq j})$.}

When $j>1$, unlike the step leading to \eqref{eq:bigjumpsum}, we 
cannot apply the map $T:(x_1,\dots,x_p)\mapsto x_p$ to \eqref{eq:pdim} 
to get a marginal
result for $X_1+\dots + X_p$. Although $T$ is uniformly continuous,
Corollary \ref{cor:variant1} is not applicable since
$$T(\mathbb{R}_+^{p\,\uparrow}) \setminus T(\projp (\bfS_{\leq
  j}))=[0,\infty)\setminus [0,\infty)=\emptyset.$$



\subsubsection{The iid case; HRV; formal statement and proof.}\label{subsubsec:formal}
Recall $X=(X_l, l\geq 1)$ has iid components each of which has a
distribution with a regularly varying tail of index $\alpha>0$. Define
$\bfC_{\leq j}$ as in \eqref{eq:Fleq} and set $\bfO_j=\R_+^{\infty}\setminus\bfC_{\leq j}$.
The definition of $\mu_t$ and $\mu^{(j)}$ are given in \eqref{eq:mutTomu}.

\begin{thm}\label{thm:itehidrv}
For every $j\geq 1$ there is a nonzero measure $\mu^{(j)} \in\M_{\bfO_j}$ with support in $\bfC_{=(j+1)}$ such that $t\Prob[X/b(t^{1/{j+1}}) \in \cdot\,] \to \mu^{(j)}(\cdot)$ in $\M_{\bfO_j}$ as
$t\to\infty$. The measure $\mu^{(j)}$ is given in \eqref{eq:mutTomu}, or more formally,
\begin{align*}
\mu^{(j)} (A)=\sum_{(i_1,\dots,i_{j+1})}\int
I\Big\{\sum_{k=1}^{j+1}z_{k} e_{i_k}\in A\Big\}
\nu_\alpha (dz_1)\dots \nu_\alpha (dz_{j+1}),
\end{align*} 
where the components of $e_{i_k}$ are all zero except component $i_k$ whose value is $1$ and the indices $(i_1,\dots,i_{j+1})$ run through the ordered subsets of size $j+1$ of $\{1,2,\dots\}$.
\end{thm}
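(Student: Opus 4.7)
My plan is to combine three ingredients already established earlier in the paper: the reduction-to-finite-dimensions result (Theorem \ref{th:fidiConvEnough}), the comparison of $\bfM$-convergence with vague convergence on a compactification (Proposition \ref{prop:vagueAndM}), and one-dimensional regular variation together with independence of the coordinates of $X$.

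First I would check that $\bfC_{\leq j}$ satisfies the hypotheses of Theorem \ref{th:fidiConvEnough}. The set is closed in $\Rplus^\infty$, and since the number of strictly positive components of a sequence is not increased by truncating and re-padding with zeros, the projection $\projp(\bfC_{\leq j})$ consists of vectors in $\Rplus^p$ with at most $j$ positive components. In particular $\projp(\bfC_{\leq j})$ is closed, and condition \eqref{eq:weirdCondit} holds. By the remark following Theorem \ref{th:fidiConvEnough} it is therefore enough to prove, for every $p\geq j+1$, the finite-dimensional convergence
\[
\mu_t^{(j)}\circ \projp^{-1}\to \mu^{(j)}\circ \projp^{-1}
\quad \text{in }\bfM\bigl(\Rplus^p\setminus \projp(\bfC_{\leq j})\bigr).
\]

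Next I would observe that neither $\mu_t^{(j)}\circ \projp^{-1}$ nor the candidate limit $\mu^{(j)}\circ \projp^{-1}$ places any mass on the lines through $\infty_p$ in the compactification $[0,\infty]^p$. By Proposition \ref{prop:vagueAndM}, the required $\bfM$-convergence on $\Rplus^p\setminus \projp(\bfC_{\leq j})$ is equivalent to vague convergence on the complement of $\projp(\bfC_{\leq j})$ in $[0,\infty]^p$. For this it suffices (Lemma \ref{lem:convdetclassSA} and a standard $\pi$-system argument) to check convergence of $\mu_t^{(j)}\circ \projp^{-1}(A)$ to $\mu^{(j)}\circ \projp^{-1}(A)$ on a convergence-determining class of $\mu^{(j)}$-continuity rectangles bounded away from $\projp(\bfC_{\leq j})$. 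A natural such class consists of finite unions of sets of the form $\prod_{l=1}^p (a_l,b_l]$ (with $b_l\leq\infty$), in which at least $j+1$ of the $a_l$ are strictly positive, since such rectangles are exactly the ones at positive distance from the set of $p$-vectors with at most $j$ positive components.

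The core computation is then elementary. If $I=\{i_1<\dots<i_{j+1}\}\subset\{1,\dots,p\}$ and $y_1,\dots,y_{j+1}>0$, independence and the one-dimensional regular variation relation $t^{1/(j+1)}\Prob[X_1>b(t^{1/(j+1)})y]\to y^{-\alpha}$ yield
\begin{align*}
t\,\Prob\bigl[X_{i_l}>b(t^{1/(j+1)})y_l,\,l=1,\dots,j+1\bigr]
&=\prod_{l=1}^{j+1}t^{1/(j+1)}\Prob[X_{i_l}>b(t^{1/(j+1)})y_l]\\
&\to \prod_{l=1}^{j+1} y_l^{-\alpha},
\end{align*}
while events forcing $j+2$ or more coordinates of $X_{|p}/b(t^{1/(j+1)})$ to be bounded below by a positive constant have probability of order $t^{-(j+2)/(j+1)}$ and contribute $O(t^{-1/(j+1)})\to 0$. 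Summing over the $\binom{p}{j+1}$ possible index sets $I$ and applying inclusion–exclusion on the chosen rectangle recovers exactly the projection of the candidate measure $\mu^{(j)}$ as written in the statement. The support claim is immediate from the form of the limit, which places all its mass on sequences with exactly $j+1$ positive entries.

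The main obstacle, and the only point requiring real care, is the bookkeeping in the last step: one must isolate the "leading order" contribution from configurations with exactly $j+1$ large coordinates and show that all higher-order configurations, as well as the events where the "small" coordinates of the rectangle constrain the large ones nontrivially, vanish in the limit. The condition that the rectangle is bounded away from $\projp(\bfC_{\leq j})$ (i.e.\ at least $j+1$ of the lower endpoints $a_l$ are positive) is precisely what makes this decomposition possible and keeps the probabilities summable; once this combinatorial accounting is done, the rest is routine application of the tools collected in Sections \ref{secconv} and \ref{subsec:reduce}.
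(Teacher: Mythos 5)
Your proposal is correct, but it formalizes a different route than the one the paper actually uses to prove Theorem \ref{thm:itehidrv}. The paper's proof stays entirely in $\R_+^{\infty}$: it builds the convergence-determining class $\bfA_{\geq j}$ of upper orthants $A_{m,i,a}=\{x:x_{i_k}>a_k,\ k=1,\dots,m\}$, proves in Lemma \ref{lem:convdetclass} (by a Portmanteau-style open/closed argument using differences $B\setminus B'$ of such sets as a neighborhood base) that checking convergence on this class suffices for convergence in $\M_{\bfO_j}$, and then finishes with exactly the product computation you give — no compactification and no projection to $\Rplus^p$. You instead carry out the plan the paper only sketches informally in Section \ref{subsubsec:HRV}: verify \eqref{eq:weirdCondit} for $\bfC_{\leq j}$, reduce to finite dimensions via Theorem \ref{th:fidiConvEnough}, pass to vague convergence on the compactified space via Proposition \ref{prop:vagueAndM} (legitimate, since neither the pre-limit measures nor $\mu^{(j)}\circ\projp^{-1}$ charge the lines through $\infty_p$), and conclude by inclusion--exclusion. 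The probabilistic core is identical in both routes: the product of $j+1$ marginal tails under the scaling $b(t^{1/(j+1)})$ tends to $\prod_l y_l^{-\alpha}$, and configurations with $j+2$ or more large coordinates contribute $O(t^{-1/(j+1)})$. What the paper's route buys is a single infinite-dimensional lemma that is reused verbatim for the Poisson-points result (Theorem \ref{thm:PoissonPtsConv}); what yours buys is direct access to the classical finite-dimensional vague-convergence toolkit. One repair to your determining class: a rectangle $\prod_l(a_l,b_l]$ with some $a_l=0$ is \emph{not} a $\mu^{(j)}$-continuity set, since the limit carries an $\epsilon_0$ factor in every coordinate outside the $j+1$ distinguished ones, so the boundary piece $\{x_l=0\}$ has positive limit mass and, moreover, such rectangles miss the points of the support with $x_l=0$. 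You should use intervals of the form $[0,\delta)$ in those coordinates (as the paper does via the sets $J(x_k)$ in the proof of Lemma \ref{lem:convdetclass}), or simply work with the orthants $A_{m,i,a}$ and their differences; with that adjustment the argument goes through.
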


The proof of Theorem \ref{thm:itehidrv} uses a particular convergence determining class $\bfA_{\geq j}$ of subsets of $\bfO_j$. Let $\bfA_{\geq j}$ denote the set of sets $A_{m,i,a}$ for $m\geq j$, where
\begin{align*}
A_{m,i,a}=\{x\in\R_+^{\infty}:x_{i_k}>a_k \text{ for } k=1,\dots,m\},
\quad i_1<\dots<i_m, a_1,\dots,a_m>0.
\end{align*}

\begin{lem}\label{lem:convdetclass}
If $\mu_t,\mu\in\M_{\bfO_j}$ and $\lim_{t\to\infty}\mu_t(A)=\mu(A)$ for all $A\in\bfA_{\geq j}$ bounded away from $\bfC_j$ with $\mu(\partial A)=0$, then $\mu_t\to\mu$ in $\M_{\bfO_j}$ as $t\to\infty$.
\end{lem}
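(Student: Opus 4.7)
The plan is to reduce to finite-dimensional convergence via Theorem~\ref{th:fidiConvEnough} and then argue that the upper orthants from $\bfA_{\geq j}$ form a convergence-determining class in the spirit of Lemma~\ref{lem:convdetclassSA}. First check the hypotheses of Theorem~\ref{th:fidiConvEnough}: the projection $\projp(\bfC_{\leq j})$ is the closed set of vectors in $\R_+^p$ with at most $j$ strictly positive coordinates, and appending zeros to any sequence cannot increase its number of positive components, so~\eqref{eq:weirdCondit} holds. Since $\R_+^p\setminus\projp(\bfC_{\leq j})=\emptyset$ for $p\leq j$, it suffices to show $\mu_t\circ\projp^{-1}\to\mu\circ\projp^{-1}$ in $\M(\R_+^p\setminus\projp(\bfC_{\leq j}))$ for every $p\geq j+1$.

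Fix $p\geq j+1$ and abbreviate $\bfC^{(p)}=\projp(\bfC_{\leq j})$, $\nu_t=\mu_t\circ\projp^{-1}$, $\nu=\mu\circ\projp^{-1}$. Every $p$-dimensional upper orthant $B=\{y\in\R_+^p:y_{i_1}>a_1,\dots,y_{i_m}>a_m\}$ with $m\geq j+1$ and $i_1<\dots<i_m\leq p$ is the projection of some $A\in\bfA_{\geq j}$ that is bounded away from $\bfC_{\leq j}$, and $\projp^{-1}(\partial B)=\partial A$, so the hypothesis yields $\nu_t(B)\to\nu(B)$ whenever $\nu(\partial B)=0$. These orthants form a $\pi$-system under intersection (union of index tuples, componentwise maximum of thresholds). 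The key structural observation is that any Borel $B\subset\R_+^p$ with $d(B,\bfC^{(p)})\geq\delta>0$ is contained in the finite union $\bigcup_{i_1<\dots<i_{j+1}}\{y:y_{i_k}>\delta/(p-j),\,k=1,\dots,j+1\}$, because if $y$ had fewer than $j+1$ coordinates exceeding $\delta/(p-j)$, zeroing out the $p-j$ smallest ones would produce a point of $\bfC^{(p)}$ within $L_1$-distance less than $\delta$ of $y$.

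Now mimic the proof of Lemma~\ref{lem:convdetclassSA}. By Lemma~\ref{lem:uncountableunion}, all but countably many threshold values are $\nu$-continuity thresholds, so every point $y$ of an open set $G\subset\R_+^p$ bounded away from $\bfC^{(p)}$ lies in the interior of a $\nu$-continuity half-open box (a finite difference of orthants in the $\pi$-system above) whose closure is contained in $G$. Separability then expresses $G$ as a countable union of such boxes, and inclusion-exclusion on the $\pi$-system yields $\liminf_t\nu_t(G)\geq\nu(G)$. For a closed $F$ bounded away from $\bfC^{(p)}$, embed $F$ in a finite-union orthant $U$ from the observation above with $\nu(\partial U)=0$; then $U\setminus F$ is open and bounded away from $\bfC^{(p)}$, so applying the open-set bound to $U\setminus F$ and inclusion-exclusion convergence on $U$ gives $\limsup_t\nu_t(F)\leq\nu(F)$. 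Theorem~\ref{portthm}(iii) then delivers $\nu_t\to\nu$. The main obstacle is the bookkeeping in the inclusion-exclusion step and verifying that the half-open boxes built from $\bfA_{\geq j}$-orthants form a neighborhood basis at every point of $\R_+^p\setminus\bfC^{(p)}$; this rests precisely on the fact that each such point has at least $j+1$ strictly positive components and therefore lies in the interior of some orthant from the class.
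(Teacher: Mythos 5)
Your argument is correct in outline, but it takes a genuinely different route from the paper. The paper proves Lemma~\ref{lem:convdetclass} entirely inside $\R_+^{\infty}$: around each $x\in\bfO_j$ it builds a neighborhood $B\setminus B'$ with $B,B'\in\bfA_{\geq j}$ directly in the sequence space, controlling the tail coordinates through the weights $2^{-k}$ in $\dinfty$ (the choice $2^{-m}<\vep/2$), and then runs the Billingsley $\pi$-system argument and Theorem~\ref{portthm}(iii) in infinite dimensions. You instead invoke Theorem~\ref{th:fidiConvEnough} to reduce everything to $\R_+^p\setminus\projp(\bfC_{\leq j})$ and run the same Billingsley-style argument there. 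The reduction is legitimate: $\bfC_{\leq j}$ satisfies \eqref{eq:weirdCondit}, the sets $A_{m,i,a}$ are cylinder sets with $\partial\,\projp^{-1}(B)=\projp^{-1}(\partial B)$, and $\projp^{-1}(B)$ is bounded away from $\bfC_{\leq j}$ whenever $B$ is bounded away from $\projp(\bfC_{\leq j})$, so the hypothesis transfers to the projected orthants exactly as you claim. What your route buys is that all the delicate neighborhood-basis and diameter estimates happen in $\R_+^p$, where the half-open product boxes genuinely have small diameter and the covering of a bounded-away set by $\binom{p}{j+1}$ orthants is a clean finite-dimensional fact; what it costs is the extra layer of Theorem~\ref{th:fidiConvEnough} and the transfer of boundaries and distances through $\projp$. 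Two small repairs: in your covering observation the threshold $\delta/(p-j)$ only yields $d(y,\bfC^{(p)})\leq\delta$ rather than a strict contradiction, so take, say, $\delta/(2(p-j))$; and in the inclusion--exclusion steps you should note that intersections of your continuity orthants are again continuity sets because the boundary of an intersection lies in the union of the boundaries, with thresholds perturbed (via Lemma~\ref{lem:uncountableunion}) so that all finitely many boundaries involved are $\nu$-null. Neither point is a genuine gap.
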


\begin{proof}
Consider the set of finite differences of sets in $\bfA_{\geq j}$ and note that this set is a $\pi$-system. 
Take $x\in\bfO_j$ and $\vep>0$. 
Since $x\in\bfO_j$ there are $i_1<\dots<i_j$ such that $x_{i_k}>0$ for each $k$.  
If $2^{-i_{j}}<\vep/2$ choose $m=i_j$. Otherwise, choose $m>i_j$ such that $2^{-m}<\vep/2$.
Take $\delta<\min\{\vep/2,\min\{x_k:x_k>0 \text{ and } k\leq m\}\}$ and set
\begin{align*}
B&=\{y\in\R_+^{\infty}:y_k\geq 0 \text{ if } x_k=0 \text{ and } y_k>x_k-\delta \text{ otherwise for }k\leq m\}\\
B'&=\{y\in\R_+^{\infty}:y_k>\delta \text{ if } x_k=0 \text{ and } y_k>x_k+\delta \text{ otherwise for }k\leq m\}.
\end{align*}
Then $B,B'\in \bfA_{\geq j}$, $B'$ is a proper subset of $B$, and $z\in B\setminus B'$ implies that $d(z,x)<\delta\sum_{k=1}^m 2^{-k}+\vep/2<\vep$, i.e.~that $z\in B_{x,\vep}$. Moreover,
\begin{align*}
(B\setminus B')^{\circ}=\{y\in\R_+^{\infty}: y_k\in J(x_k) \text{ for }k\leq m\},
\end{align*}
where $J(x_k)=[0,\delta)$ if $x_k=0$ and $J(x_k)=(x_k-\delta,x_k+\delta)$ if $x_k\neq 0$. 
Finally, 
$\partial (B\setminus B')$ is the set of $y\in\R_+^{\infty}$ such that $y_k\in [\max\{0,x_k-\delta\},x_k+\delta]$ for all $k\leq m$ and $y_k=\delta$ or $y_k=x_k\pm \delta$ for some $k\leq m$. In particular, there is an uncountable set of $\delta$-values, for which the boundaries $\partial (B\setminus B')$ are disjoint, satisfying the requirements. Therefore $\delta$ can without loss of generality be chosen so that $\mu(\partial (B\setminus B'))=0$. 
The separability of $\R_+^{\infty}$ implies (cf. the proof of Theorem 2.3 in \cite{billingsley:1999}) that each open set is a countable union of $\mu$-continuity sets of the form $(B\setminus B')^{\circ}$.  
The same argument as in the proof of Theorem 2.2 in \cite{billingsley:1999} therefore shows that $\liminf_{t\to\infty}\mu_t(G)\geq\mu(G)$ for all open $G\subset\bfO_j$ bounded away from $\bfC_j$.
Any closed set $F\subset\bfO_j$ bounded away from $\bfC_j$ is a subset of some $A\in\bfA_{\geq j}$. By the same argument as above, we may without loss of generality take $A$ such that $\mu(\partial A)=0$. The set $A\setminus F$ is open and therefore
\begin{align*}
\mu(A)-\limsup_{t\to\infty}\mu_t(F)=\liminf_{t\to\infty}\mu_t(A\setminus F)\geq \mu(A\setminus F)=\mu(A)-\mu(F),
\end{align*} 
i.e.~$\limsup_{t\to\infty}\mu_t(F)\leq\mu(F)$.
The conclusion follows from Theorem \ref{portthm}(iii).
\end{proof}

\begin{proof}[Proof of Theorem \ref{thm:itehidrv}]
For any $m>j$ and $a_1,\dots,a_m>0$,
\begin{align*}
\lim_{t\to\infty}c(t)^j\Prob(X\in tA_{j,i,a})=\prod_{k=1}^j a_k^{-\alpha}=\mu_j(A_{j,i,a})
\quad\text{and}\quad
\lim_{t\to\infty}c(t)^j\Prob(tA_{j+1,i,a})=0.
\end{align*} 
Therefore, the support of $\mu$ is a subset of $\bfC_{j+1}\setminus \bfC_{j}$.
Notice that for $j\geq 1$
\begin{align*}
(\bfC_{j+1}\setminus\bfC_{j})\cap\R_+^p=\cup_{i_1<\dots<i_{j}} \{(\lambda_1e_{i_1},\dots,\lambda_{j}e_{i_{j}});\lambda_1,\dots,\lambda_{j}>0\},
\end{align*}
where the indices $i_1,\dots,i_{j}$ run through the ordered subsets of size $j$ of $\{1,2,\dots\}$.
\end{proof}

\subsubsection{Poisson points as random elements of
  $\mathbb{R}_+^\infty$.}\label{subsubsec:PoissonPts}
Considering Poisson points provides a variant to the iid case and
leads naturally to considering regular variation of the distribution
of a L\'evy process with regularly varying measure.

Suppose $\nu \in \mathbb{M}(0,\infty)$ and 
$x\mapsto\nu(x,\infty)$ is regularly varying at infinity with index $-\alpha<0$. Let $Q(x)=\nu([x,\infty))$ and define
$Q^{\leftarrow}(y)=\inf\{t>0:\nu([t,\infty))<y\}$. Then the function
$b$ given by $b(t)=Q^{\leftarrow}(1/t)$ satisfies
$\lim_{t\to\infty}t\nu(b(t)x,\infty)=x^{-\alpha}$. It follows that $b$
is regularly varying at infinity with index $1/\alpha$. 

Let $\{E_n, n\geq 1\}$ be iid standard exponentially distributed random
variables so that if $\{\Gamma_n, n \geq 1\}:=\cumsum\{E_n, n\geq 1\}$,
we get points of a homogeneous Poisson process of rate
1. Transforming \cite[p. 121]{resnickbook:2007}, we find
$\{Q^\leftarrow (\Gamma_n), n\geq 1\}$ are points of a Poisson process
with mean measure $\nu$, written in decreasing order.

Define the following subspaces of $\R_+^{\infty}$: 
\begin{align}
\R_+^{\infty\,\downarrow}&=\{x\in\R_+^{\infty}:x_1\geq x_2\geq \dots
\},\nonumber \\
\bfH_{=j}&=\{x\in \R_+^{\infty\,\downarrow}:x_j>0,x_{j+1}=0\},\nonumber \\
\bfH_{\leq j}&=\{x\in \R_+^{\infty\,\downarrow}:x_{j+1}=0\}, \quad
\bfO_j=\R_+^{\infty\,\downarrow} \setminus \bfH_{\leq j}, \label{eq:notationPoisson}
\end{align}
with the usual meaning of multiplication by a scalar. 
So $\bfH_{\leq 0}=\{0_\infty\}$ and $\R_+^{\infty\,\downarrow}$ are sequences with decreasing,
non-negative components and $\bfH_{\leq j}$ are decreasing sequences
such that components are 0 from the $(j+1)st$ component
onwards. Furthermore, for each $j\geq 1$, $\bfH_{\leq j}$ is
closed. To verify the closed property, suppose $\{x(n), n\geq 1\}$
is a sequence in $\bfH_{\leq j}$ and $x(n) \to x(\infty)$ in the
$\R_+^\infty$ metric. This means componentwise convergence so for the
$m$th component convergence, where $m>j$, as $n\to \infty$,
$0=x_m(n) \to x_m(\infty)$ and $x(\infty)$ is 0 beyond the $j$th
component. The monotonicity of the components for each $x(n) $ is
preserved by taking limits. Hence $\bfH_{\leq j}$ is closed. 

Analogous to \eqref{eq:rvRinfty}, we claim
\begin{align*}
t\Prob[\bigl(Q^\leftarrow (\Gamma_l)/b(t), l\geq 1\bigr) \in \cdot \,]
\to \mu^{(1)}(\cdot), 
\end{align*}
in $\bfM(\bfO_0)$ as $t\to\infty$, where 
\begin{align*}
\mu^{(1)}(dx_1\times dx_2 \times \dots)=\nu_\alpha(dx_1)1_{[x_1>0]} \prod_{l=2}^\infty \epsilon_0(dx_l).
\end{align*}
To verify this, it suffices to prove finite
dimensional convergence and for the biggest component and $x>0$,
\begin{align*}
tP[Q^\leftarrow (\Gamma_1)/b(t)>x]&=tP[\Gamma_1\leq Q(b(t)x)]=t(1-e^{-Q(b(t)x)})\\
&\sim tQ(b(t)x) \to x^{-\alpha} =\nu_\alpha (x,\infty).
\end{align*}
For the first two components, let PRM($\nu$) be a Poisson counting
function with mean measure $\nu$ and for $x>0,\,y>0$,
\begin{align*}
tP[Q^\leftarrow (\Gamma_1)/b(t)>x,\,Q^\leftarrow (\Gamma_2)/b(t)>y]
\leq tP[\text{PRM}(\nu) (b(t)(x\wedge y, \infty) \geq 2]
\end{align*}
and writing $p(t)=\nu (b(t)(x\wedge y, \infty))$, we have
\begin{align*}
tP[\text{PRM}(\nu) (b(t)(x\wedge y, \infty) \geq 2]
&=t(1-e^{-p(t)} -p(t)e^{-p(t)})\\
&\leq t(p(t)-p(t)e^{-p(t)}) \leq  tp^2(t) \to 0.
\end{align*}
The conclusion now follows from Lemma \ref{lem:convdetclass} by observing that we have shown convergence for the sets in a convergence determining class.

Similarly, we claim 
\begin{align*}
t\Prob[\bigl(Q^\leftarrow (\Gamma_l)/b(t^{1/2}), l\geq 1\bigr) \in \cdot \,]
\to \mu^{(2)}(\cdot)
\end{align*}
in $\bfM(\bfO_1)$ as $t\to\infty$, where 
\begin{align*}
\mu^{(2)}(dx_1\times dx_2 \times \dots)=\nu_\alpha(dx_1)\nu_\alpha(dx_2)1_{[x_1\geq x_2>0]} \prod_{l=3}^\infty \epsilon_0(dx_l).
\end{align*}
Straightforward computations show that the distribution of $(\Gamma_1,\Gamma_2)=(E_1,E_1+E_2)$ satisfies
\begin{align*}
\Prob(\Gamma_1\leq z,\Gamma_2\leq w)=
\left\{\begin{array}{ll}
1-e^{-z}-ze^{-w}, & z<w,\\
1-e^{-w}-we^{-w}, & z\geq w.\\
\end{array}\right.
\end{align*}
Notice that, for $x>y>0$,
\begin{align*}
&\Prob[Q^\leftarrow (\Gamma_1)/b(t^{1/2})>x,\,Q^\leftarrow (\Gamma_2)/b(t^{1/2})>y]\\
&\quad=\Prob[\Gamma_1\leq Q(b(t^{1/2})x),\Gamma_2\leq Q(b(t^{1/2})y)]\\
&\quad=1-e^{-Q(b(t^{1/2})x)}-Q(b(t^{1/2})x)e^{-Q(b(t^{1/2})y)}\\
&\quad\sim Q(b(t^{1/2})x)-Q(b(t^{1/2})x)^2/2+O(Q(b(t^{1/2})x)^3)\\
&\quad\quad - Q(b(t^{1/2})x)\Big(1-Q(b(t^{1/2})y)+O(Q(b(t^{1/2})y)^2)\Big)
\end{align*}
In particular, it is a straightforward exercise in calculus to verify that for $x>y>0$
\begin{align*}
&\lim_{t\to\infty}t\Prob[Q^\leftarrow (\Gamma_1)/b(t^{1/2})>x,\,Q^\leftarrow (\Gamma_2)/b(t^{1/2})>y]\\
&\quad=x^{-\alpha}y^{-\alpha}-x^{-2\alpha}/2\\
&\quad=\mu^{(2)}(z\in \R^{\infty\,\downarrow}:z_1>x,z_2>y).
\end{align*}
Similar computations show that, for $y>x>0$,
\begin{align*}
&\lim_{t\to\infty}t\Prob[Q^\leftarrow (\Gamma_1)/b(t^{1/2})>x,\,Q^\leftarrow (\Gamma_2)/b(t^{1/2})>y]\\
&\quad=y^{-2\alpha}/2\\
&\quad=\mu^{(2)}(z\in \R^{\infty\,\downarrow}:z_1>x,z_2>y).
\end{align*}
Moreover, for $x>0,\,y>0,\,z>0$,
\begin{align*}
&tP[Q^\leftarrow (\Gamma_1)/b(t^{1/2})>x,\,Q^\leftarrow (\Gamma_2)/b(t^{1/2})>y,\,Q^\leftarrow (\Gamma_3)/b(t^{1/2})>z]\\
&\quad\leq tP[\text{PRM}(\nu) (b(t^{1/2})(x\wedge y\wedge z, \infty) \geq 3]
\end{align*}
and writing $p(t)=\nu (b(t^{1/2})(x\wedge y\wedge z, \infty))$, we have
\begin{align*}
tP[\text{PRM}(\nu) (b(t^{1/2})(x\wedge y\wedge z, \infty) \geq 3]
&=t(1-e^{-p(t)} -p(t)e^{-p(t)}-p(t)^2e^{-p(t)}/2)\\
&\sim t(p(t)^3/3!+o(p(t)^3))
\end{align*}
as $t\to\infty$. Hence, $\lim_{t\to\infty}tP[\text{PRM}(\nu) (b(t^{1/2})(x\wedge y\wedge z, \infty) \geq 3]=0$.

As in the iid case described by Theorem \ref{thm:itehidrv} and
\eqref{eq:mutTomu}, we have an infinite number of regular variation
properties co-existing.

\begin{thm}\label{thm:PoissonPtsConv} 
For the Poisson points $\{Q^\leftarrow (\Gamma_l), l\geq 1\bigr \}$,
for every $j \geq 1$, we have  
\begin{align}\label{eq:hidePoisson}
tP\bigl[\bigl({Q^\leftarrow (\Gamma_l)} /{b(t^{1/j})}, l\geq 1\bigr) \in
\cdot \,\bigr] \to \mu^{(j)}(\cdot),
\end{align}
in $\bfM_{\bfO_{j-1}}$ as $t\to\infty$, where $ \mu^{(j)}$ is a measure concentrating on $\bfH_{=j}$ given by
\begin{align}\label{eq:nuj}
\mu^{(j)}(dx_1,dx_2,\dots)=\prod_{i=1}^j \nu_\alpha (dx_i)1_{[x_1\geq
  x_2\geq \dots \geq x_j>0]} \prod_{i=j+1}^\infty \epsilon_0(dx_i).
\end{align}
\end{thm}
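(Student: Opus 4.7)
The plan is to generalise the $j=1, 2$ computations already laid out for the Poisson points, combined with a convergence-determining class argument analogous to Lemma~\ref{lem:convdetclass}. First I adapt Lemma~\ref{lem:convdetclass} to $\bfO_{j-1}=\R_+^{\infty\,\downarrow}\setminus\bfH_{\leq j-1}$ (the same proof carries over, since every nonzero $x\in\bfO_{j-1}$ has at least $j$ positive coordinates and one restricts the convergence-determining class $\bfA_{\geq j}$ to decreasing sequences), reducing the conclusion to verifying
\begin{equation*}
tP[Q^{\leftarrow}(\Gamma_{i_k})/b(t^{1/j})>a_k,\ k=1,\dots,m]\to \mu^{(j)}(A_{m,i,a})
\end{equation*}
for every $A_{m,i,a}=\{x\in\R_+^{\infty\,\downarrow}:x_{i_k}>a_k\}$ with $m\geq j$, $i_1<\cdots<i_m$, $a_k>0$ and $\mu^{(j)}(\partial A_{m,i,a})=0$. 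Because the points are decreasing I may take $a_1\geq\cdots\geq a_m$, and writing $s_k:=Q(b(t^{1/j})a_k)\sim t^{-1/j}a_k^{-\alpha}$ the event becomes $\{\Gamma_{i_k}\leq s_k\}$.

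Next I eliminate the degenerate cases $i_m>j$ (which includes all $m>j$, since $i_m\geq m$). The event forces at least $i_m$ arrivals of PRM$(\nu)$ to fall in $(b(t^{1/j})a_m,\infty)$, and a standard Poisson-tail estimate gives
\begin{equation*}
tP[\,\cdot\,]\sim t\cdot(t^{-1/j}a_m^{-\alpha})^{i_m}/i_m! = O(t^{1-i_m/j})\to 0
\end{equation*}
whenever $i_m>j$; this is essentially the same computation already displayed for $m=3$ in the $j=2$ discussion. Simultaneously $\mu^{(j)}$ vanishes on such sets, since its support lies in $\bfH_{=j}$.

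For the surviving case $m=j$ with $i_k=k$ I condition on $N(s_j)$, the number of unit-rate Poisson arrivals in $(0,s_j]$. Contributions from $\{N(s_j)\geq j+1\}$ are $O(t\cdot s_j^{j+1})=o(1)$ and drop out. On $\{N(s_j)=j\}$, which has probability $\sim s_j^j/j!$, the first $j$ arrival times are distributed as the order statistics of $j$ iid Uniform$(0,s_j)$ variables, so
\begin{equation*}
P[\Gamma_k\leq s_k,\ k\leq j\mid N(s_j)=j]=\frac{j!}{s_j^j}\int_0^{s_1}\!\!\int_{x_1}^{s_2}\!\!\cdots\!\!\int_{x_{j-1}}^{s_j}dx_j\cdots dx_1.
\end{equation*}
The substitution $x_k=t^{-1/j}y_k$ converts this into $t^{-1}(j!/s_j^j)V$ with $V:=\int_0^{a_1^{-\alpha}}\!\int_{y_1}^{a_2^{-\alpha}}\!\cdots\!\int_{y_{j-1}}^{a_j^{-\alpha}}dy_j\cdots dy_1$, and multiplying by $t\cdot P[N(s_j)=j]\sim t\cdot s_j^j/j!$ produces the limit $V$.

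Finally I identify $V$ with $\mu^{(j)}(A_{j,(1,\ldots,j),a})$ via the change of variable $y_k=x_k^{-\alpha}$, which maps $\nu_\alpha(dx_k)\mapsto dy_k$, sends the ordering $x_1\geq\cdots\geq x_j>0$ to $0<y_1\leq\cdots\leq y_j$, and transforms $\{x_k>a_k\}$ into $\{y_k<a_k^{-\alpha}\}$. The main obstacle will be the bookkeeping in the preceding paragraph: making the $O(t\cdot s_j^{j+1})=o(1)$ error uniform in the configuration and justifying the factorisation into ``leading Poisson mass times conditional uniform order statistics'' in the dominant event $\{N(s_j)=j\}$. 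Once that probabilistic routine is in place, the change of variable above is a routine identification, and the convergence-determining class reduction from the first paragraph then delivers the theorem.
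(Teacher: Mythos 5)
Your proposal is correct and follows essentially the same route as the paper: reduce via the convergence-determining class of Lemma~\ref{lem:convdetclass} (adapted to $\R_+^{\infty\,\downarrow}$) to computing limits on the rectangles $A_{m,i,a}$, kill the contributions with $i_m>j$ by a Poisson tail bound, and evaluate the surviving case $m=j$, $i_k=k$ explicitly. The paper only carries out the explicit computation for $j=1,2$ via the joint law of $(\Gamma_1,\Gamma_2)$ and appeals to ``similarly for $j\geq 3$''; your conditioning on $N(s_j)$ together with the uniform order-statistics representation supplies that step cleanly and uniformly in $j$, and your limit $V$ reproduces the paper's $j=2$ values $x^{-\alpha}y^{-\alpha}-x^{-2\alpha}/2$ and $y^{-2\alpha}/2$.
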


\begin{proof}
The explicit computations above, and similarly for $j\geq 3$, together with an application of Lemma \ref{lem:convdetclass} yields the conclusion.
\end{proof}

\section{Finding the hidden jumps of a L\'evy process}\label{sec:hrvLevy}
In this section we consider a real valued L\'evy process $X=\{X_t, t\geq 0\}$ as a random element of $\bfD:=\bfD([0,1],\R)$, the space of real valued c\`adl\`ag
functions on $[0,1]$. We metrize $\bfD$ with the usual Skorohod
metric 
$$d_{\text{sk}}(x,y)=\inf_{\lambda \in \Lambda}\|\lambda -e\|\vee \|x\circ \lambda -y\|,$$
where $x,y \in \bfD,$ $\lambda$ is a non-decreasing
homeomorphism of $[0,1]$ onto itself, $\Lambda$ is the set of all such
homeomorphisms, $e(t)=t$ is the identity, and
$\|x\|=\sup_{t\in [0,1]} |x(t)|$ is the sup-norm. 
The space $\bfD$ is not complete under the metric $d_{\text{sk}}$, but
there is an equivalent metric under which $\bfD$ is complete \cite[page 125]{billingsley:1999}. Therefore, the space $\bfD$ fits into the framework presented in Section \ref{secconv} and we may use the Skorohod metric to check continuity of mappings.

For simplicity we suppose $X$ has only positive
jumps and its L\'evy measure $\nu$ concentrates on $(0,\infty)$. Suppose
$x\mapsto\nu(x,\infty)$ is regularly varying at infinity with index
$-\alpha<0$. Let $Q(x)=\nu([x,\infty))$ and define
$Q^{\leftarrow}(y)=\inf\{t>0:\nu([t,\infty))<y\}$. Then the function
$b$ given by $b(t)=Q^{\leftarrow}(1/t)$ satisfies
$\lim_{t\to\infty}t\nu(b(t)x,\infty)=x^{-\alpha}$ {and} $b$
is regularly varying at infinity with index $1/\alpha$. 
{It is shown in \cite{hult:lindskog:2005SPA,hult:lindskog:2007} that with scaling
function $b(t)$, the distribution of $X$ is regularly
varying on $\bfD\setminus \{0\}$ with a limit measure
concentrating on functions which are constant except for one jump.} 
Where did the other L\'evy process jumps go? Using weaker scaling  and
 biting more out of $\bfD$ than just the zero-function $0$,
allows recovery of  the other jumps.

The standard Ito representation \cite{
bertoin:1996,
kyprianou:2006,
applebaum:2004}
 of $X$ is 
\begin{align*}
X_t=ta+B_t+\int_{|x|\leq 1}x[N([0,t]\times dx)-t\nu(dx)]+\int_{|x|>1}xN([0,t]\times dx),
\end{align*}
where $B$ is standard Brownian motion independent of the Poisson
random measure $N$ on $[0,1]\times (0,\infty)$ with mean measure
$\text{Leb} \times \nu$. 
Referring to the discussion preceding \eqref{eq:notationPoisson},
$\{Q^\leftarrow (\Gamma_n), n\geq 1\}$ are points written in
decreasing order of a Poisson random measure on $(0,\infty)$ with mean
measure $\nu$ and by augmentation
\citep[p. 122]{resnickbook:2007}, we can represent 
$$N=\sum_{l=1}^{\infty}\epsilon_{(U_l,Q^{\leftarrow}(\Gamma_l))},$$
where $(U_l,l\geq 1)$ are iid standard uniform random variables
independent of $\{\Gamma_n\}$.

The L\'evy-Ito decomposition allows $X$ to be decomposed into the sum
of two independent L\'evy processes, 
\begin{equation}\label{eq:Levy-Ito}
X=\widetilde{X}+J,
\end{equation}
 where $J$ is
a compound Poisson process of large jumps bounded
from below by $1$, and $\widetilde{X}=X-J$ is a L\'evy process of
small jumps that are bounded from above by $1$. The compound Poisson process can
be represented as the random sum
$J=\sum_{l=1}^{N_1}Q^{\leftarrow}(\Gamma_l)1_{[U_l,1]}$, where
$N_1=N([0,1]\times [1,\infty))$. 

Recall the notation in \eqref{eq:notationPoisson} for 
$\R_+^{\infty\,\downarrow}$, 
$\bfH_{=j}$ and $\bfH_{\leq j}$ and the result in Theorem
\ref{thm:PoissonPtsConv}. 
We seek to convert a statement like \eqref{eq:hidePoisson} into a statement about $X$. 
The first step is to augment \eqref{eq:hidePoisson} with a sequence of
iid standard uniform random varables. The uniform random variables
will eventually serve as jump times {for the L\'evy process.}
The following result is an immediate consequence of Theorem \ref{thm:PoissonPtsConv}.

\begin{prop}\label{thm:ordseqconv}
Under the given assumptions on $\nu$ and $Q$,  for $j\geq 1$, 
\begin{align}
t\Prob\Big[
\bigl((Q^{\leftarrow}(\Gamma_l)/b(t^{1/j}),l\geq 1),(U_l,l\geq 1)\bigr)\in
\,\cdot\,\Big] \to (\mu^{(j)}\times L)(\cdot) \label{eq:augConv}
\end{align}
in $\M((\R_+^{\infty\,\downarrow}\setminus \bfH_{\leq j-1})\times [0,1]^{\infty})$ as $t\to\infty$,
where $L$ is Lebesgue measure on $[0,1]^{\infty}$ and $\mu^{(j)}$ concentrates on $\bfH_{=j}$ and is given by \eqref{eq:nuj}.
\end{prop}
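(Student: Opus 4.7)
The plan is to combine Theorem \ref{thm:PoissonPtsConv} with the independence of $(\Gamma_l)_{l\geq 1}$ and $(U_l)_{l\geq 1}$ via a Fubini-type reduction at the level of test functions. Write $\bfO_{j-1}=\R_+^{\infty\,\downarrow}\setminus\bfH_{\leq j-1}$ as in \eqref{eq:notationPoisson}; the product state space is then $\bfO_{j-1}\times [0,1]^{\infty}$, and the set to be avoided is $\bfH_{\leq j-1}\times [0,1]^{\infty}$, which is closed in the ambient product $\R_+^{\infty\,\downarrow}\times [0,1]^{\infty}$. For an arbitrary test function $f\in\C_{\bfO_{j-1}\times [0,1]^{\infty}}$, I would introduce
\begin{align*}
g(x):=\int_{[0,1]^{\infty}}f(x,u)\,L(du),
\end{align*}
and aim to reduce the desired convergence to a single application of Theorem \ref{thm:PoissonPtsConv} with $g$ as test function.

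The first step is to verify that $g\in\C_{\bfO_{j-1}}$. Boundedness of $g$ is immediate, continuity follows by dominated convergence in the $u$-variable using continuity and boundedness of $f$, and the support condition reduces to the elementary observation that for any standard product metric the distance from $(x,u)$ to $\bfH_{\leq j-1}\times[0,1]^{\infty}$ is realized by matching second coordinates, and hence equals $d(x,\bfH_{\leq j-1})$. Therefore, if $f$ vanishes on the $r$-neighborhood of the removed set in the product, then $g$ vanishes on $\{x:d(x,\bfH_{\leq j-1})<r\}$, so $g$ has support bounded away from $\bfH_{\leq j-1}$.

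Given $g\in\C_{\bfO_{j-1}}$, independence of $(\Gamma_l)$ and $(U_l)$ combined with Fubini yields
\begin{align*}
t\,\E\bigl[f\bigl((Q^{\leftarrow}(\Gamma_l)/b(t^{1/j}))_{l\geq 1},(U_l)_{l\geq 1}\bigr)\bigr]
=t\,\E\bigl[g\bigl((Q^{\leftarrow}(\Gamma_l)/b(t^{1/j}))_{l\geq 1}\bigr)\bigr],
\end{align*}
and Theorem \ref{thm:PoissonPtsConv} applied to $g$ sends the right-hand side to $\int g\,d\mu^{(j)}$, which by Fubini equals $\int f\,d(\mu^{(j)}\times L)$. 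Since $f$ was arbitrary, this is the definition of convergence in $\M(\bfO_{j-1}\times [0,1]^{\infty})$ and gives \eqref{eq:augConv}. The main (and mild) obstacle is the support/continuity verification for $g$; once that is in place Theorem \ref{thm:PoissonPtsConv} delivers the conclusion immediately, which is why the authors describe the proposition as an immediate consequence.
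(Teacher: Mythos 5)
Your proof is correct and follows exactly the route the paper intends: the authors simply assert the proposition is ``an immediate consequence'' of Theorem \ref{thm:PoissonPtsConv}, and your Fubini reduction at the level of test functions $f\mapsto g(x)=\int f(x,u)\,L(du)$, together with the observation that the product-metric distance to $\bfH_{\leq j-1}\times[0,1]^{\infty}$ reduces to $d(x,\bfH_{\leq j-1})$ so that $g\in\C_{\bfO_{j-1}}$, is precisely the verification being left to the reader. No gaps.
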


Think of \eqref{eq:augConv} as regular variation on 
 the product space $\R_+^{\infty\,\downarrow}\times [0,1]^{\infty}$
when  multiplication by a scalar is defined as $(\lambda,(x,y))\mapsto (\lambda x,y)$.

Recall $\nu_\alpha $ is the Pareto measure on $(0,\infty)$
satisfying $\nu_\alpha (x,\infty)=x^{-\alpha}$, for $x>0$, and we
denote by $\nu_\alpha^j$ product measure generated by $\nu_\alpha$
with $j$ factors.  
For $m\geq 0$, let $\bfD_{\leq m}$ be the subspace of the Skorohod
space $\bfD$ consisting of nondecreasing step functions with at most
$m$ jumps {and define} {$A_m$ as
\begin{align}\label{eq:Am}
A_m 
&= \{ (x,u)  \in \R_+^{\infty\,\downarrow}\times [0,1]^{\infty} \\
&\qquad : u_i \in (0,1) \mbox{ for } 
1 \leq i \leq m; 
u_i \neq u_j \mbox{ for } i \neq j, 1 \leq i,j \leq m \}. \nonumber
\end{align}
 Let} $T_m$ be the map 
\begin{equation}\label{eq:Tm}
T_m:A_m \mapsto \bfD \text{ defined by }
T_m (x,u)= \sum_{i=1}^m x_i1_{[u_i,1]},
\end{equation}
and we think of $T_m$ as mapping a jump size sequence and {a 
sequence of distinct jump times} into a step function in $\bfD_{\leq m}\subset \bfD$.
Our approach 
applies $T_m$ to the convergence in
\eqref{eq:augConv} to get a sequence of regular
variation properties  of the distribution of $X$.
Whereas in Section \ref{subsubsec:HRV}, we could rely on uniform continuity of
CUMSUM, $T_m$ is not uniformly continuous and hence the  mapping
Theorem \ref{mapthm} must be used and its hypotheses verified. 
We will prove the following.

\begin{thm}\label{thm:Xconv}
Under the regular variation assumptions on $\nu$ and $ Q$, for $j\geq 1$, 
\begin{align}\label{eq:hrvX}
t\Prob \bigl[{X}/{b(t^{1/j})} \in \,\cdot\, \bigr]\to &
(\mu^{(j)}\times L)\circ T_j^{-1}(\cdot)\\= &
\E\Big[\nu_{\alpha}^j\Big\{y\in (0,\infty)^j:\sum_{i=1}^j
y_i1_{[U_i,1]}\in\cdot\Big\}\Big] \nonumber  
\end{align}
in $\M(\bfD\setminus \bfD_{\leq j-1})$ as $t\to\infty$.
\end{thm}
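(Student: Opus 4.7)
The plan is to derive the claim via Proposition \ref{thm:ordseqconv}, a single application of the mapping theorem (Theorem \ref{mapthm}) to the truncation map $T_j$, and a converging-together argument that patches the small-jump remainder in the $\bfM$-topology.

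Introduce the truncated process $X^{(j)} := T_j\bigl((Q^{\leftarrow}(\Gamma_l))_l,(U_l)_l\bigr) = \sum_{l=1}^{j} Q^{\leftarrow}(\Gamma_l)\, 1_{[U_l,1]}$, consisting of the top $j$ jumps of $X$. I plan to apply Theorem \ref{mapthm} with $h = T_j$ to the convergence from Proposition \ref{thm:ordseqconv}, obtaining $t\,\Prob[X^{(j)}/b(t^{1/j}) \in \cdot\,] \to (\mu^{(j)}\times L) \circ T_j^{-1}$ in $\bfM(\bfD\setminus \bfD_{\leq j-1})$. Two hypotheses must be checked. First, $T_j$ is continuous $(\mu^{(j)}\times L)$-a.e., since its discontinuities arise only when two coordinates of $u$ coincide or when some $u_i\in\{0,1\}$, an event of zero $L$-measure. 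Second, $T_j^{-1}$ preserves bounded-away-ness: given $A' \subset \bfD$ with $\dsk(A',\bfD_{\leq j-1}) \geq \epsilon$ and any $(x,u) \in T_j^{-1}(A')$, replacing $x_j$ by $0$ yields a point of $\bfD_{\leq j-1}$ at sup-norm distance exactly $x_j$ from $T_j(x,u)$, forcing $x_j \geq \epsilon$ and hence $\dinfty(x,\bfH_{\leq j-1}) \geq \epsilon/2^j$ in the product metric. The limit measure does not depend on the choice $m\geq j$ in $T_m$, since $\mu^{(j)}$ is supported on $\bfH_{=j}$.

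Next, I must show that the remainder $R := X - X^{(j)}$ is negligible at scale $b(t^{1/j})$, i.e., $\lim_{t\to\infty} t\,\Prob[\|R\|_\infty > \delta b(t^{1/j})] = 0$ for every $\delta>0$. Conditionally on $V_j := Q^{\leftarrow}(\Gamma_j) = v$, the order-statistic property of the Poisson random measure $N$ implies that the remaining Poisson points form a PRM of intensity $\Leb\times\nu|_{(0,v]}$, independent of the top $j$, while the drift and Brownian part of $X$ are unchanged. A Bichteler--Jacod/Rosenthal-type moment inequality for (suitably compensated) pure-jump L\'evy processes then yields, for the pure-jump part $R^{\mathrm{jump}}$ of $R$ and every $p\geq 2$, $\E\bigl[\|R^{\mathrm{jump}}\|_\infty^p \bigm| V_j=v\bigr] \leq C\bigl(\int_0^v x^p\,\nu(dx) + (\int_0^v x^2\,\nu(dx))^{p/2}\bigr)$, which by Karamata and regular variation of $\nu$ grows at most like $v^{p-\alpha}$ for large $v$ and stays bounded for $v\leq 1$; the drift contributes a deterministic $O(1)$, and the Brownian part has Gaussian tails that, after multiplication by $t$, vanish faster than any polynomial in $b(t^{1/j})$. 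From Proposition \ref{thm:ordseqconv} one reads off $\Prob[V_j > x] \sim c\,x^{-j\alpha}$ as $x\to\infty$, so $\E[V_j^{p-\alpha}] < \infty$ precisely when $p<(j+1)\alpha$. Choosing $p$ in the nonempty window $(j\alpha,(j+1)\alpha)$ then gives $t\,\Prob[\|R\|_\infty > \delta b(t^{1/j})] \leq C\, t\, b(t^{1/j})^{-p} = C\, t^{1-p/(j\alpha)} \to 0$.

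Finally, a converging-together argument using Theorem \ref{portthm}(iii)--(iv) closes the proof: for each $A$ bounded away from $\bfD_{\leq j-1}$ and satisfying $\bigl((\mu^{(j)}\times L)\circ T_j^{-1}\bigr)(\partial A)=0$, the inclusion $\{X/b(t^{1/j}) \in A\} \subset \{X^{(j)}/b(t^{1/j}) \in A^\delta\} \cup \{\|R\|_\infty > \delta b(t^{1/j})\}$ and its reverse counterpart, together with the two preceding steps, yield the desired limit as $\delta\downarrow 0$. The main obstacle is the remainder estimate: the random cutoff $V_j$ forces a delicate tuning of the moment exponent $p$ inside the narrow window $(j\alpha,(j+1)\alpha)$, and the infinite-variation case $\alpha\geq 1$ requires additional care with the compensator of the small-jump integral when restricting to $(0,v]$.
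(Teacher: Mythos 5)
Your route is genuinely different from the paper's. The paper never forms the remainder $X-\sum_{l=1}^{j}Q^{\leftarrow}(\Gamma_l)1_{[U_l,1]}$. Instead it uses the L\'evy--It\^o split $X=\widetilde{X}+J$ at jump size $1$: the part $\widetilde{X}$ (drift, Brownian motion, compensated jumps $\leq 1$) is killed by Skorohod's inequality plus a Markov bound with an arbitrarily high moment, which is painless because $\widetilde{X}_1$ has \emph{all} moments finite; the compound Poisson part $J$ is then handled by bare-hands open/closed set estimates (Lemma \ref{lem:Jconv}), reducing to the top $j$ points of $J$ and controlling the event of more than $j$ jumps exceeding $b(t^{1/j})^{\beta}$ and the sum of the sub-threshold jumps by elementary Poisson and exponential tail bounds. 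Your version front-loads the reduction to the top $j$ jumps of the whole process and invokes the mapping theorem once; the price is that your remainder $R$ contains the $(j+1)$-st largest jump and all smaller ones, so its negligibility at scale $b(t^{1/j})$ is itself a heavy-tail estimate rather than a light-tail one. Your verifications of a.e.\ continuity of $T_j$ and of the bounded-away property (forcing $x_j\geq\epsilon$ by deleting the $j$-th jump) are correct and match Lemmas \ref{lem:contTm} and \ref{lem:Tm}.

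There is one concrete gap in the remainder estimate. Your Markov bound forces the moment exponent into the window $p\in(j\alpha,(j+1)\alpha)$: the lower end is needed for $t\,b(t^{1/j})^{-p}\to 0$ and the upper end for $\E[V_j^{p-\alpha}]<\infty$, since $\Prob[V_j>x]\sim c\,x^{-j\alpha}$. But the Rosenthal-type inequality you quote is stated for $p\geq 2$, and the window contains no such $p$ whenever $(j+1)\alpha\leq 2$ --- in particular for $j=1$ and every $\alpha\leq 1$, which is the most classical regime. The argument is salvageable (for $p\in(1,2]$ use the Burkholder--Davis--Gundy bound $\E\|M\|_\infty^p\leq C\int_0^v x^p\,\nu(dx)$ via subadditivity of $u\mapsto u^{p/2}$; for $p\leq 1$, which can only occur when $\alpha<1$, the jumps have finite variation and plain subadditivity of $u\mapsto u^{p}$ suffices), but as written the key inequality is not applicable in exactly the cases where you need it. Relatedly, the deterministic compensator $-s\int_0^1 x\,\nu(dx)$ is infinite when $\alpha\geq 1$ and cannot be separated from the jump integral; conditioning on $\{V_j=v\}$ with $v<1$ then leaves an uncompensated drift of order $\int_v^1 x\,\nu(dx)$, which must be controlled using the super-exponential lower tail of $V_j$. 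You flag both issues but do not resolve them; they are the actual content of the remainder step, and the paper's threshold-at-$1$ decomposition is designed precisely to avoid them.
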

{The first expression after taking the limit in \eqref{eq:hrvX}
 follows from the mapping Theorem \ref{mapthm} and the second
from applying $T_j$  to \eqref{eq:augConv}  and then using Fubini to hold the
integration with respect to Lebesgue measure $L$ outside as an expectation. }
\vspace{-.2in}
\begin{proof}
Here is the outline; more detail is given in the next section. We
prove convergence using  Theorem \ref{portthm} (iii).
Take $F$ and $G$ closed and open sets respectively in $\bfD$ that are
bounded away from $\bfD_{\leq j-1}$. 
Take $\delta>0$ small enough so that also $F_\delta =\{x\in\bfD: \dsk (x,F)\leq \delta\}$ is bounded away from $\bfD_{\leq j-1}$. Then 
\begin{align}
tP[ X/b(t^{1/j}) \in F]
&= t\Prob\Big[X \in b(t^{1/j})F,\sup_{s\in [0,1]}|\widetilde{X}_s|\leq b(t^{1/j})\delta\Big]\nonumber\\
&\quad + t\Prob\Big[X \in b(t^{1/j})F,\sup_{s\in [0,1]}|\widetilde{X}_s|> b(t^{1/j})\delta\Big]\nonumber \\
&\leq t\Prob [J \in b(t^{1/j})F_\delta ]+tP[\sup_{s\in
  [0,1]}|\widetilde{X}_s|> b(t^{1/j})\delta]. \label{eq:smallAway}
\end{align}
The L\'evy process $\widetilde{X} $ has all moments finite and does
not contribute asymptotically. Application of  Lemmas \ref{lem:Jconv}
and \ref{lem:smalljumpsdontmatter}, and letting $\delta\downarrow 0$
gives 
\begin{align*}
\limsup_{t\to\infty}tP[X /b(t^{1/j}) \in F]
\leq (\mu^{(j)}\times L)\circ T_j^{-1}(F).
\end{align*}

To deal with the lower bound using open $G$, 
take $\delta>0$ small enough so that 
$${G^{-\delta}:=\bigl((G^c)_\delta\bigr)^c=\{x\in G: d_{\text{sk}}
(x,y)<\delta \text{ implies }y \in G\}}
$$
 is nonempty and
bounded away from $\bfD_{\leq j-1}$. Then 
\begin{align*}
tP[X /b(t^{1/j})  \in G]
&\geq t\Prob\Big[J \in b(t^{1/j})G^{-\delta},\sup_{s\in [0,1]}|\widetilde{X}_s|\leq b(t^{1/j})\delta\Big]\\
&= t\Prob\Big[J \in b(t^{1/j})G^{-\delta}\Big]\Prob\Big[\sup_{s\in
  [0,1]}|\widetilde{X}_s|\leq b(t^{1/j})\delta\Big]. 
\end{align*}
Applying Lemmas \ref{lem:Jconv}
and \ref{lem:smalljumpsdontmatter}
and letting $\delta\downarrow 0$ gives
\begin{align*}
\liminf_{t\to\infty}tP[X/b(t^{1/j}) \in G] \geq (\mu^{(j)}\times L)\circ T_j^{-1}(G).
\end{align*}
\end{proof}

\subsection{Details}
We now provide more detail for the proof of Theorem
\ref{thm:Xconv}. 


In the decomposition \eqref{eq:Levy-Ito}, the process
$\widetilde{X}$ represents small jumps that should not affect
asymptotics. We make
this precise with the next Lemma.

\begin{lem}\label{lem:smalljumpsdontmatter}
For $j\geq 1$, and any $\delta >0$,
$$\limsup_{t\to\infty}t\Prob\Big[\sup_{s\in [0,1]}|\widetilde{X}_s|>b(t^{1/j})\vep\Big]=0.$$ 
\end{lem}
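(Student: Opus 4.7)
The plan is to exploit that $\widetilde{X}$ is a L\'evy process with jumps bounded by $1$, so it has finite exponential moments of every order. Combining an exponential Markov inequality with the growth of the regularly varying scaling $b(t^{1/j})$ will yield a bound that decays super-polynomially in $t$, defeating the factor of $t$ outside.

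First I would recall the construction: in the decomposition \eqref{eq:Levy-Ito}, $\widetilde{X}=X-J$ has L\'evy measure $\nu$ restricted to $(0,1]$, a Brownian component, and a deterministic drift. Because the L\'evy measure has compact support, the characteristic exponent extends to the real line, and for every $\lambda\in\R$,
\[
\E[e^{\lambda\widetilde{X}_s}]=e^{s\psi(\lambda)}, \qquad
\psi(\lambda)=a'\lambda+\tfrac12 \sigma^2\lambda^2+\int_{(0,1]}(e^{\lambda x}-1-\lambda x)\,\nu(dx)<\infty,
\]
where $a'$ absorbs the drift. Consequently $M_s:=\exp(\lambda\widetilde{X}_s-s\psi(\lambda))$ is a nonnegative martingale for each $\lambda\in\R$.

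Next I would apply Doob's maximal inequality to $M_s$ on $[0,1]$. For $\lambda>0$ and $x>0$,
\[
\Prob\Big[\sup_{s\in[0,1]}\widetilde{X}_s>x\Big]
=\Prob\Big[\sup_{s\in[0,1]}M_s>e^{\lambda x-s\psi(\lambda)}\text{ for some }s\Big]
\leq e^{|\psi(\lambda)|-\lambda x}.
\]
The same argument applied to $-\widetilde{X}$ (whose L\'evy measure also has compact support) gives the matching lower tail bound, and combining yields
\[
\Prob\Big[\sup_{s\in[0,1]}|\widetilde{X}_s|>x\Big]\leq C_{\lambda}\,e^{-\lambda x}
\]
for a constant $C_\lambda<\infty$ depending only on $\lambda$.

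Finally I would substitute $x=b(t^{1/j})\vep$. Since $b$ is regularly varying at infinity with positive index $1/\alpha$, $b(t^{1/j})/\log t\to\infty$ as $t\to\infty$, so for every $\lambda>0$,
\[
t\,\Prob\Big[\sup_{s\in[0,1]}|\widetilde{X}_s|>b(t^{1/j})\vep\Big]
\leq C_{\lambda}\,t\,e^{-\lambda\vep\,b(t^{1/j})}\longrightarrow 0.
\]
The only nontrivial step is the verification that $\psi(\lambda)$ is finite for all real $\lambda$, which is immediate from the compact support of the truncated L\'evy measure; no new obstacle arises.
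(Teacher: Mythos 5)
Your proof is correct, but it takes a different route from the paper's. The paper reduces the supremum to the time-one marginal via Skorohod's inequality for L\'evy processes, $\Prob[\sup_{s\in[0,1]}|\widetilde{X}_s|>2a]\leq (1-c)^{-1}\Prob[|\widetilde{X}_1|>a]$ with $c=\sup_{s\in[0,1]}\Prob[|\widetilde{X}_s|>a]$, and then applies an ordinary Markov bound of order $m$, using only that $\widetilde{X}_1$ has all polynomial moments finite; choosing $m$ large enough that $t/b^m(t^{1/j})\to 0$ (possible since $b(t^{1/j})$ is regularly varying with positive index $1/(j\alpha)$) finishes the argument. You instead exploit the compact support of the truncated L\'evy measure to get finite exponential moments, form the exponential martingale $M_s=\exp(\lambda\widetilde{X}_s-s\psi(\lambda))$, and apply Doob's maximal inequality to both $\widetilde{X}$ and $-\widetilde{X}$; the resulting bound $C_\lambda e^{-\lambda\vep b(t^{1/j})}$ decays super-polynomially, so the prefactor $t$ is irrelevant. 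Each step you cite is sound: $\psi(\lambda)<\infty$ for all real $\lambda$ because $\int_{(0,1]}(e^{\lambda x}-1-\lambda x)\,\nu(dx)$ is controlled near zero by $\int x^2\,\nu(dx)<\infty$ and the integrand is bounded on the rest of the compact support, and $b(t^{1/j})/\log t\to\infty$ follows from regular variation with positive index. Your approach buys a much stronger (exponential) tail estimate with no need for a specialized maximal inequality beyond Doob's, at the cost of relying on the boundedness of the jumps; the paper's argument is more robust in that it would survive in situations where only finitely many polynomial moments beyond some order $m_0$ with $t/b^{m_0}(t^{1/j})\to 0$ are available. The only cosmetic slip is writing the first maximal-inequality step as an equality of events; what you actually use is the inclusion $\{\sup_s\widetilde{X}_s>x\}\subset\{\sup_s M_s\geq e^{\lambda x-|\psi(\lambda)|}\}$, which holds since $s\psi(\lambda)\leq|\psi(\lambda)|$ for $s\in[0,1]$.
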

\begin{proof}
We rely on Skorohod's inequality for L\'evy processes
\cite{breiman:1992}, \cite[Section 7.3]{resnick:1999book}.
For $a>0$,
$$\Prob\Big[\sup_{s\in [0,1]}|\widetilde{X}_s|
>2a\Big]\leq (1-c)^{-1}\Prob[|\widetilde{X}_1|>a],$$
where $c=\sup_{s\in [0,1]}\Prob[|\widetilde{X}_s|>a]$. Thus, since
$\widetilde{X}_1$ has all moments finite, for any $m>1$, 
\begin{align*}
t\Prob\Big[\sup_{s\in [0,1]}|\widetilde{X}_s|>b(t^{1/j}) \delta\Big]
&\leq {t}(1-c(t))^{-1}P[|\widetilde{X}_1|>b(t^{1/j}) \delta/2]\\
&\leq {t}(1-c(t))^{-1} \frac{\E |\widetilde{X}_1|^m}{b^m(t^{1/j} )(\delta/2)^m}.
\end{align*} 
For large enough $m$, $t/b^m(t^{1/j}) \to 0$ as $t\to \infty$ and 
\begin{align*}
c(t):=&\sup_{s\in [0,1]} P[|\widetilde{X}_s>b(t^{1/j}) \delta/2]
\leq \sup_{s\in [0,1]}\frac{\E |\widetilde{X}_s|^m}{b^m(t^{1/j})(\delta/2)^m}\\
=&\sup_{s\in [0,1]} \frac{s^m\E |\widetilde{X}_1|^m}{b^m(t^{1/j})(\delta/2)^m} 
\leq \frac{\E |\widetilde{X}_1|^m}{b^m(t^{1/j})(\delta/2)^m} \to 0,
\end{align*} 
as $t\to\infty$ since $b(t) \to \infty$.
\end{proof}

\begin{lem}\label{lem:Jconv}
For $j\geq 1$,
$t\Prob[J \in b(t^{1/j})\,\cdot\,]\to (\mu^{(j)}\times L)\circ T_j^{-1}(\cdot)$
in $\M(\bfD\setminus \bfD_{\leq j-1})$ as $t\to\infty$.
\end{lem}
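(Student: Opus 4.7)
The plan is to deduce Lemma \ref{lem:Jconv} from Proposition \ref{thm:ordseqconv} by applying the mapping theorem (Theorem \ref{mapthm}) to the truncated map $T_m$ for $m\geq j$, and then showing that the contribution of jumps beyond the top $m$ is asymptotically negligible. For $m\geq j$, let $J_t^{(m)}:=T_m(Y_t,U)=\sum_{l=1}^{m}(Q^{\leftarrow}(\Gamma_l)/b(t^{1/j}))\,1_{[U_l,1]}$ denote the step function built from the top $m$ scaled Poisson points (with $Y_t=(Q^{\leftarrow}(\Gamma_l)/b(t^{1/j}),\,l\geq 1)$), and let $R_t^{(m)}:=J/b(t^{1/j})-J_t^{(m)}$ be the remainder.

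First I would verify that $T_m:A_m\to\bfD$ satisfies the hypotheses of Theorem \ref{mapthm}. The map $T_m$ is continuous on $A_m$, which carries full $(\mu^{(j)}\times L)$-measure. For the preservation of boundedness away from the removed sets, suppose $A'\subset\bfD$ has Skorohod distance at least $\epsilon>0$ from $\bfD_{\leq j-1}$ and $T_m(x,u)\in A'$ has $x_j<\epsilon$; then $\sum_{i=1}^{j-1}x_i\,1_{[u_i,1]}\in\bfD_{\leq j-1}$ lies at sup-norm (hence Skorohod) distance at most $x_j<\epsilon$ from $T_m(x,u)$, a contradiction. Hence $T_m^{-1}(A')$ is bounded away from $\bfH_{\leq j-1}\times[0,1]^{\infty}$, and Theorem \ref{mapthm} yields
\[
t\Prob[J_t^{(m)}\in\cdot]\to(\mu^{(j)}\times L)\circ T_m^{-1}(\cdot)\quad\text{in }\M(\bfD\setminus\bfD_{\leq j-1}).
\]
Because $\mu^{(j)}$ charges only sequences with zeros past the $j$-th coordinate, $T_m$ and $T_j$ agree on this support, so the right-hand side equals $(\mu^{(j)}\times L)\circ T_j^{-1}$, the target limit.

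To transfer the conclusion from $J_t^{(m)}$ to $J/b(t^{1/j})$, I would prove the tail estimate $\lim_{m\to\infty}\limsup_{t\to\infty}t\Prob[\|R_t^{(m)}\|_\infty>\delta]=0$ for every $\delta>0$. Combining with the previous step via the inclusion $\{J/b(t^{1/j})\in F\}\subset\{J_t^{(m)}\in\overline{F^{\delta/2}}\}\cup\{\|R_t^{(m)}\|_\infty>\delta/2\}$ for closed $F$ bounded away from $\bfD_{\leq j-1}$ (and its analogue for open $G$) then yields the $\limsup$ and $\liminf$ bounds required by Theorem \ref{portthm}(iii). The tail estimate is the main obstacle: I would handle it by further splitting $R_t^{(m)}$ at a scaled jump level $\eta>0$. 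The piece whose scaled jump sizes are $\geq\eta$ is nontrivial only when the $(m+1)$-th ordered Poisson point exceeds $\eta b(t^{1/j})$, whose probability is asymptotic to $C_\eta\, t^{-(m+1)/j}=o(t^{-1})$ for $m\geq j$. The piece with scaled jumps in $[1/b(t^{1/j}),\eta)$ is a compound Poisson with $\eta$-bounded jumps and Poisson rate bounded by $\nu([1,\infty))$; I would estimate its sup-norm via Skorohod's inequality in the spirit of Lemma \ref{lem:smalljumpsdontmatter}, together with Karamata-type bounds on $\int_1^{\eta b(t^{1/j})}x^k\nu(dx)$ for a well-chosen $k>\alpha$. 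For $j=1$, a Chebyshev bound with second moments suffices; but for $j\geq 2$, $t^{1-1/j}\to\infty$ forces an exponential (Bennett-type) concentration inequality exploiting the boundedness of both the jumps and the Poisson rate, so as to beat the factor $t$.
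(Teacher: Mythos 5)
Your proposal is correct in its overall architecture but follows a genuinely different route from the paper. You prove convergence of the truncated process $J_t^{(m)}=T_m(Y_t,U)$ by pushing Proposition \ref{thm:ordseqconv} through the mapping theorem, and then transfer to $J/b(t^{1/j})$ by a converging-together argument requiring $\limsup_t tP[\|R_t^{(m)}\|_\infty>\delta]=0$; you control the remainder by splitting at the level $\eta b(t^{1/j})$ and invoking, for $j\geq 2$, a Bennett-type exponential inequality for the compound Poisson sum of the sub-$\eta b(t^{1/j})$ points. This works: the probability that the $(m+1)$st point exceeds $\eta b(t^{1/j})$ is indeed $O(t^{-(m+1)/j})=o(t^{-1})$ for $m\geq j$, and the Bennett bound yields a tail of order $(ct^{1/j})^{-\delta/(c'\eta)}$ — but you should state explicitly that $\eta$ must be chosen small relative to $\delta/j$ so that the exponent exceeds $j$ and the factor $t$ is beaten; without that choice the bound does not close. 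The paper instead works directly with Theorem \ref{portthm}(iii) on open and closed sets and avoids any concentration inequality: for the upper bound it truncates at the level $b(t^{1/j})^{\beta}$ with $\beta\in(\tfrac{j}{j+1},1)$, so that with probability $1-o(1/t)$ at most $j$ points exceed the threshold (a $\Gamma_{j+1}$ tail estimate), while the sum of the remaining points is crudely bounded by $N_1 b(t^{1/j})^{\beta}$ and killed by a Markov bound on high moments of the Poisson count $N_1$; for the lower bound it conditions on $N_1\leq M$ and bounds $\sum_{l=j+1}^{M}Q^{\leftarrow}(\Gamma_l)\leq MQ^{\leftarrow}(E_{j+1})$. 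The paper's device is more elementary and also delivers exactly your tail estimate with $m=j$; your route buys a cleaner conceptual statement (mapping theorem plus a single remainder bound) at the price of a heavier probabilistic tool. One small slip to fix: in your bounded-away verification, the sup-norm distance from $T_m(x,u)$ to $\sum_{i=1}^{j-1}x_i 1_{[u_i,1]}$ is $\sum_{i=j}^{m}x_i$, which is at most $(m-j+1)x_j$, not $x_j$; the conclusion survives with $x_j\geq \epsilon/(m-j+1)$, exactly as in Lemma \ref{lem:Tm}.
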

\begin{proof}
We {apply}
 Theorem \ref{portthm} (iii).

{\sc Construction of the lower bound for open sets:}
Let $G\subset \bfD$ be open and bounded away from $\bfD_{\leq
  j-1}$. {This implies that functions in $G$ have no fewer than $j$ jumps.}
Recall that $\Gamma_l=E_1+\dots+E_l$, where the $E_k$s are iid standard exponentials.
Take $M\geq j$ and notice that
\begin{align*}
t\Prob\Big[&\sum_{l=1}^{N_1}Q^{\leftarrow}(\Gamma_l)1_{[U_l,1]}\in b(t^{1/j})G\Big]\\
&\geq t\Prob\Big[\sum_{l=1}^{N_1}Q^{\leftarrow}(\Gamma_l)1_{[U_l,1]}\in b(t^{1/j})G,N_1\leq M\Big]\\
&=t\Prob\Big[\sum_{l=1}^{N_1}Q^{\leftarrow}(\Gamma_l)1_{[U_l,1]}\in b(t^{1/j})G,j\leq N_1\leq M\Big]\\
& \geq t\Prob\Big[\sum_{l=1}^{j}Q^{\leftarrow}(\Gamma_l)1_{[U_l,1]}\in b(t^{1/j})G^{\delta},\sum_{l=j+1}^{M}Q^{\leftarrow}(\Gamma_l)\leq b(t^{1/j})\delta{, Q^{\leftarrow}(\Gamma_{M+1}) < 1} \Big]\\
& \geq
t\Prob\Big[\sum_{l=1}^{j}\frac{Q^{\leftarrow}(\Gamma_l)}{b(t^{1/j})}1_{[U_l,1]}\in
G^{\delta},
M\frac{Q^{\leftarrow}(E_{j+1})}{b(t^{1/j})}\leq \delta{, Q^{\leftarrow}(\Gamma_{M+1} - \Gamma_{j+1}) < 1}\Big]\\
& \geq t\Prob\Big[\sum_{l=1}^{j}
\frac{Q^{\leftarrow}(\Gamma_l)}{b(t^{1/j})}1_{[U_l,1]}\in
b(t^{1/j})G^{\delta}\Big]\Prob\Big[MQ^{\leftarrow}(E_{j+1})\leq
b(t^{1/j}\delta\Big]
\\ 
&\qquad \qquad 
{\times \Prob\Big[Q^{\leftarrow}(\Gamma_{M+1} -
  \Gamma_{j+1}) < 1 \Big]}\\
&
\geq t\Prob\Big[\Bigl((\frac{Q^{\leftarrow}(\Gamma_l) }{b(t^{1/j})}
,l\geq 1),(U_l,l\geq 1)\Bigr)\in  T_j^{-1}(G^{\delta})\Big]\Prob\Big[MQ^{\leftarrow}(E_{j+1})\leq b(t^{1/j})\delta\Big]\\
&\qquad \qquad {\times \Prob\Big[Q^{\leftarrow}(\Gamma_{M+1} - \Gamma_{j+1}) < 1 \Big]}
\end{align*}
Let $t\to\infty$ and apply Theorem \ref{portthm} (iii) to
\eqref{eq:augConv} so the $\liminf$ of the first factor above has  a
lower bound. As $t\to\infty$, the second factor approaches $1$. Let
$M\to \infty$ and the third factor also approaches $1$.
Let $\delta\downarrow 0$ and we obtain
\begin{align*}
\liminf_{t\to\infty}t\Prob\Big[\sum_{l=1}^{N_1}Q^{\leftarrow}(\Gamma_l)1_{[U_l,1]}\in
b(t^{1/j})G\Big]&\geq (\mu^{(j)}\times L)\circ T_j^{-1}(G).
\end{align*}

{\sc Construction of the upper bound for closed sets:}
Let $F\subset \bfD$ be closed and bounded away from $\bfD_{\leq j-1}$. Take $\beta\in (0,1)$ close to $1$ and let 
\begin{align*}
M_t=\sum_{l=1}^{N_1}1_{(b(t^{1/j})^{\beta},\infty)}(Q^{\leftarrow}(\Gamma_l)).
\end{align*}
Choose $\delta>0$ small enough so that $F_\delta   :=\{x\in\bfD:
d(x,F)\leq \delta\}$ 
 is bounded away
from $\bfD_{\leq j-1}$.
Then
\begin{align*}
t\Prob\Big[&\sum_{l=1}^{N_1}Q^{\leftarrow}(\Gamma_l)1_{[U_l,1]}\in b(t^{1/j})F\Big]\\
&= t\Prob\Big[\sum_{l=1}^{N_1}Q^{\leftarrow}(\Gamma_l)1_{[U_l,1]}\in b(t^{1/j})F,
\sum_{l=M_t+1}^{N_1}Q^{\leftarrow}(\Gamma_l)\leq b(t^{1/j})\delta\Big]\\
&\qquad
+t\Prob\Big[\sum_{l=1}^{N_1}Q^{\leftarrow}(\Gamma_l)1_{[U_l,1]}\in
b(t^{1/j})F,\sum_{l=M_t+1}^{N_1}Q^{\leftarrow}(\Gamma_l)>b(t^{1/j})\delta\Big].\\
&\leq t\Prob\Big[\sum_{l=1}^{M_t}Q^{\leftarrow}(\Gamma_l)1_{[U_l,1]}\in b(t^{1/j})F_\delta \Big]
+t\Prob\Big[\sum_{l=M_t+1}^{N_1}Q^{\leftarrow}(\Gamma_l)>b(t^{1/j})\delta\Big]\\
\intertext{Decompose the first summand according to whether $M_t\leq
  j$ or $M_t\geq j+1$. Notice $M_t<j$ is incompatible with 
$\sum_{l=1}^{M_t}Q^{\leftarrow}(\Gamma_l)1_{[U_l,1]}\in
b(t^{1/j})F_\delta$ since $F_\delta$ is bounded away from $\bfD_{\leq
  j-1}$. Thus we get the upper bound}
&\leq t\Prob\Big[\sum_{l=1}^{j}Q^{\leftarrow}(\Gamma_l)1_{[U_l,1]}\in b(t^{1/j})F_\delta \Big]
+t\Prob[M_t\geq j+1]\\
&\quad\quad+t\Prob\Big[\sum_{l=M_t+1}^{N_1}Q^{\leftarrow}(\Gamma_l)>b(t^{1/j})\delta\Big].
\end{align*}

We now show that the second and third of the three terms above vanish as $t\to\infty$. 
Firstly,
{the definition of $M_t$ implies that $Q^{\leftarrow}(\Gamma_l)
  \leq b(t^{1/j})^\beta$ for $M_t+1 \leq l \leq N_1$.} 
Thus,
\begin{align*}
t\Prob\Big[\sum_{l=M_t+1}^{N_1}Q^{\leftarrow}(\Gamma_l)>b(t^{1/j})\delta\Big] &{\leq tP[(N_1 - M_t) b(t^{1/j})^\beta >b(t^{1/j})\delta]}\\ 
&\leq tP[N_1>b(t^{1/j})^{1-\beta}\delta].
\end{align*}
The right-hand side converges to $0$ as $t\to\infty$ since the tail
probability has a Markov bound of $tE(N_1)^p/[b(t^{1/j})^{1-\beta}
\delta ]^p$ for any $p$.
Secondly,
\begin{align*}
\Prob[M_t\geq j+1]&\leq \Prob[Q^{\leftarrow}(\Gamma_{j+1})>b(t^{1/j})^{\beta}]\\
&\leq \Prob[\Gamma_{j+1}\leq \nu([b(t^{1/j})^{\beta},\infty))]\\
&\leq \Prob[\max(E_1,\dots,E_{j+1})\leq \nu([b(t^{1/j})^{\beta},\infty))]\\
&= \Prob[E_1\leq \nu([b(t^{1/j})^{\beta},\infty))]^{j+1}.
\end{align*}
Since $P[E_1\leq y]\sim y$ as $y\downarrow 0$, and since $\nu([x,\infty))$ is regularly varying at infinity with index $-\alpha$ and $b$ is regularly varying at infinity with index $1/\alpha$, we find that 
\begin{align*}
\limsup_{t\to\infty}t\nu([b(t^{1/j})^{\beta},\infty))^{j+1}
=\limsup_{t\to\infty}L(t)t^{1-\beta(j+1)/j}
\end{align*}
for some slowly varying function $L$. In particular, choosing 
$\beta\in {(\frac{j}{j+1},1)}$ ensures that $\lim_{t\to\infty}t\Prob[M_t\geq j+1]=0$.

We now deal with the remaining term.
Since 
\begin{align*}
&t\Prob\Big[\sum_{l=1}^{j}Q^{\leftarrow}(\Gamma_l)1_{[U_l,1]}\in b(t^{1/j})F_\delta \Big]\\
&\quad= t\Prob\Big[((Q^{\leftarrow}(\Gamma_l),l\geq 1),(U_l,l\geq 1))\in b(t^{1/j})\circ T_j^{-1}(F_\delta )\Big] \\
&{\qquad+ t\Prob\Big[((Q^{\leftarrow}(\Gamma_l),l\geq 1),(U_l,l\geq 1))\in A_m^c \Big]}
\end{align*}
and, by Lemmas \ref{lem:contTm} and \ref{lem:Tm}, $T_j^{-1}(F_\delta
)$ is, if nonempty, closed and bounded away from $\bfH_{\leq
  j-1}\times [0,1]^{\infty}$, {Proposition \ref{thm:ordseqconv}
  and the fact that $A_m^c$ is a $\Prob$-null set} yield that  
\begin{align*}
\limsup_{t\to\infty}t\Prob\Big[\sum_{l=1}^{N_1}Q^{\leftarrow}(\Gamma_l)1_{[U_l,1]}\in
b(t^{1/j})F\Big]\leq (\mu^{(j)}\times L)\circ T_j^{-1}(F_\delta).
\end{align*}
Letting $\delta\downarrow 0$ shows that 
\begin{align*}
\limsup_{t\to\infty}t\Prob\Big[\sum_{l=1}^{N_1}Q^{\leftarrow}(\Gamma_l)1_{[U_l,1]}\in b(t^{1/j})F\Big]\leq (\mu^{(j)}\times L)\circ T_j^{-1}(F).
\end{align*}
We have thus shown that 
$\liminf_{t\to\infty}t\Prob[J \in b(t^{1/j})G]\geq (\mu^{(j)}\times L_j)\circ T_j^{-1}(G)$ 
and
$\limsup_{t\to\infty}t\Prob[J \in b(t^{1/j})F]\leq (\mu^{(j)}\times L_j)\circ T_j^{-1}(F)$ 
for all open $G$ and closed $F$ bounded away from $\bfD_{\leq j-1}$.
The conclusion follows from Theorem \ref{portthm}.
\end{proof}

Recall the definitions of $A_m$ and $T_m$ in \eqref{eq:Am} and \eqref{eq:Tm}.
\begin{lem}\label{lem:contTm}
For $m\geq 1$, $T_m : A_m \mapsto \bfD$ is continuous. 
\end{lem}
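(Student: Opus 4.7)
Take a sequence $(x^{(n)}, u^{(n)}) \to (x, u)$ in $A_m$, which means componentwise convergence $x_i^{(n)} \to x_i$ and $u_i^{(n)} \to u_i$ for each $i \geq 1$. The strategy is to exhibit a sequence of Skorohod time changes $\lambda_n \in \Lambda$ satisfying $\|\lambda_n - e\| \to 0$ and $\|T_m(x^{(n)}, u^{(n)}) \circ \lambda_n - T_m(x, u)\| \to 0$, which by the definition of $\dsk$ gives $\dsk\bigl(T_m(x^{(n)}, u^{(n)}), T_m(x, u)\bigr) \to 0$.

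\textbf{Construction of $\lambda_n$.} Since $(x, u) \in A_m$, the points $u_1, \dots, u_m$ are distinct elements of $(0,1)$. Without loss of generality, relabel so that $0 < u_1 < u_2 < \cdots < u_m < 1$. Because $u_i^{(n)} \to u_i$ and the $u_i$ are distinct and lie in the open interval $(0,1)$, for all sufficiently large $n$ we also have $0 < u_1^{(n)} < u_2^{(n)} < \cdots < u_m^{(n)} < 1$. For such $n$, define $\lambda_n : [0,1] \to [0,1]$ to be the unique continuous, piecewise linear, strictly increasing map satisfying $\lambda_n(0)=0$, $\lambda_n(1)=1$, and $\lambda_n(u_i) = u_i^{(n)}$ for $i = 1, \dots, m$. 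Then $\lambda_n \in \Lambda$, and an elementary estimate on piecewise linear interpolants gives
\begin{align*}
\|\lambda_n - e\| \;\leq\; \max_{1 \leq i \leq m} \bigl|u_i^{(n)} - u_i\bigr| \;\longrightarrow\; 0.
\end{align*}

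\textbf{Uniform convergence after time change.} The key identity is that, since $\lambda_n$ is monotone increasing and $\lambda_n(u_i) = u_i^{(n)}$, for every $t \in [0,1]$ we have $\lambda_n(t) \geq u_i^{(n)}$ if and only if $t \geq u_i$. Therefore
\begin{align*}
T_m(x^{(n)}, u^{(n)})\bigl(\lambda_n(t)\bigr) \;=\; \sum_{i=1}^m x_i^{(n)} \mathbf{1}_{[u_i^{(n)}, 1]}\bigl(\lambda_n(t)\bigr) \;=\; \sum_{i=1}^m x_i^{(n)} \mathbf{1}_{[u_i, 1]}(t),
\end{align*}
so that
\begin{align*}
\bigl\| T_m(x^{(n)}, u^{(n)}) \circ \lambda_n - T_m(x, u) \bigr\| \;\leq\; \sum_{i=1}^m \bigl|x_i^{(n)} - x_i\bigr| \;\longrightarrow\; 0.
\end{align*}
Combining the two displayed bounds yields $\dsk\bigl(T_m(x^{(n)}, u^{(n)}), T_m(x, u)\bigr) \to 0$, proving continuity.

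\textbf{Potential obstacle.} The only subtle point is making sure $\lambda_n$ is a bona fide element of $\Lambda$, i.e.\ strictly increasing with $\lambda_n(0)=0$ and $\lambda_n(1)=1$. This rests entirely on the distinctness of the $u_i$'s guaranteed by the definition of $A_m$ and on the fact that $u_i \in (0,1)$, so the $u_i^{(n)}$ remain strictly ordered and strictly inside $(0,1)$ for $n$ large. Without the constraint $u_i \neq u_j$ (or if some $u_i$ could coincide with $0$ or $1$ in the limit) the map $\lambda_n$ could fail to be injective or to fix the endpoints, which is exactly why $A_m$ is defined the way it is; note also that $T_m$ is \emph{not} continuous on all of $\R_+^{\infty\,\downarrow}\times [0,1]^\infty$, so restriction to $A_m$ is essential.
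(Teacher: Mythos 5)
Your proposal is correct and follows essentially the same route as the paper's proof: both reduce to the first $m$ coordinates, construct the piecewise-linear time change $\lambda$ carrying the limit jump times to the approximating ones (so that $\|\lambda - e\|$ is controlled by the displacement of the $u_i$'s and the sup-norm discrepancy is controlled by $\sum_i |x_i^{(n)} - x_i|$), and invoke the definition of $\dsk$. Your observation that $\lambda_n$ exactly aligns the jumps, so the remaining error involves only the jump sizes, is the same mechanism as in the paper's estimate.
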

\begin{proof}
The projection {
\begin{align*}
A_m \ni (x,u) \mapsto ((x_1,\dots,x_m),(u_1,\dots,u_m)) \in \R_+^{m\,\downarrow} \times (0,1)^{m,\neq}
\end{align*}
where $(0,1)^{m,\neq} = \{ (u_1,\ldots, u_m) \in (0,1)^m : u_i \neq u_j \mbox{ for } i \neq j \}$, 
is continuous.} Since compositions of continuous functions are continuous, it remains to check that
\begin{align*}
\R_+^{m\,\downarrow} \times (0,1)^{m,\neq} \ni ((x_1,\dots,x_m),(u_1,\dots,u_m)) \mapsto \sum_{i=1}^m x_i1_{[u_i,1]} \in \bfD
\end{align*}
is continuous.
Take $(x,u) \in \R_+^{m\,\downarrow} \times (0,1)^{m,\neq}$. Then
there exists some $\delta>0$ such that, for
$(\widetilde{x},\widetilde{u}) \in \R_+^{m\,\downarrow} \times
(0,1)^{m,\neq}$, $d_{2m}((x,u),
(\widetilde{x},\widetilde{u}))<\delta$, where $d_{2m}$ is the usual
metric in $\R^{2m}$, implies that the components of $\widetilde{u}$
appear in the same order as do the components of $u$. If
$0=u_{(0)}<u_{(1)}<\dots u_{(m)} <u_{(m+1)}=1,$ with corresponding
notation for the ordered $\tilde u$'s, make sure 
$3\cdot \delta < \vee_{i=1}^{m+1} |u_{(i)} -u_{(i-1)}|\vee |\tilde u_{(i)}
-\tilde u_{(i-1)}|.$
Consider the piece-wise linear function $\lambda_l$ for which $\lambda_l(0)=0$, $\lambda_l(1)=1$, and $\lambda_l(u_i)=\widetilde{u}_i$ for each $i$. Notice that $\lambda_l$ is strictly increasing and satisfies $\|\lambda_l-e\|<\delta$. Therefore,
\begin{align*}
\sup_{t\in [0,1]}\Big|\sum_{i=1}^m x_i1_{[\lambda_l(u_i),1]}(t)-\sum_{i=1}^m \widetilde{x}_i1_{[\widetilde{u}_i,1]}(t)\Big|<\sum_{i=1}^m|x_i-\widetilde{x}_i|<m\delta.
\end{align*}
In particular, 
\begin{align*}
d_{\text{sk}}\Big(\sum_{i=1}^m x_i1_{[u_i,1]},\sum_{i=1}^m\widetilde{x}_i1_{[\widetilde{u}_i,1]}\Big)<m\delta,
\end{align*}
which shows the continuity.
\end{proof}

\begin{lem}\label{lem:Tm}
{Suppose $A\subset \bfD$ is} bounded away from $\bfD_{\leq j-1}$. 
{For  $m\geq j$,
if} $T_m^{-1}(A)$ is nonempty, then it is bounded away from $\bfH_{\leq j-1}\times [0,1]^{\infty}$.
\end{lem}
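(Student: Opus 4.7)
\textbf{Proof plan for Lemma \ref{lem:Tm}.} I will argue by contradiction. Suppose $A\subset\bfD$ is bounded away from $\bfD_{\leq j-1}$ but $T_m^{-1}(A)$ is \emph{not} bounded away from $\bfH_{\leq j-1}\times[0,1]^\infty$. Then there exists a sequence $(x^{(n)},u^{(n)})\in T_m^{-1}(A)\subset A_m$ whose distance (in the product metric on $\R_+^{\infty\,\downarrow}\times [0,1]^\infty$) to $\bfH_{\leq j-1}\times[0,1]^\infty$ tends to $0$. Since the second coordinate is a free parameter that already lies in $[0,1]^\infty$, this is equivalent to $d_\infty\bigl(x^{(n)},\bfH_{\leq j-1}\bigr)\to 0$ in $\R_+^{\infty\,\downarrow}$.

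The first step is to turn this geometric statement into a coordinate statement. Since every $y\in\bfH_{\leq j-1}$ has $y_j=0$ and $x^{(n)}\in\R_+^{\infty\,\downarrow}$ has $x_j^{(n)}\ge 0$, a short calculation with $d_\infty$ shows $d_\infty\bigl(x^{(n)},\bfH_{\leq j-1}\bigr)\ge 2^{-j}(x_j^{(n)}\wedge 1)$, so $x_j^{(n)}\to 0$. By the monotonicity built into $\R_+^{\infty\,\downarrow}$, we get $0\le x_i^{(n)}\le x_j^{(n)}\to 0$ for every $i\ge j$.

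The second step is to transfer this to $\bfD$. Define the truncation $y^{(n)}=\sum_{i=1}^{j-1}x_i^{(n)}1_{[u_i^{(n)},1]}\in\bfD_{\leq j-1}$. Taking the trivial time change $\lambda=e$ in the infimum defining $d_{\mathrm{sk}}$,
\begin{align*}
d_{\mathrm{sk}}\bigl(T_m(x^{(n)},u^{(n)}),y^{(n)}\bigr)
\le \Bigl\|\sum_{i=j}^{m}x_i^{(n)}1_{[u_i^{(n)},1]}\Bigr\|
\le \sum_{i=j}^{m}x_i^{(n)}\le (m-j+1)\,x_j^{(n)}\longrightarrow 0.
\end{align*}
Therefore $d_{\mathrm{sk}}\bigl(T_m(x^{(n)},u^{(n)}),\bfD_{\leq j-1}\bigr)\to 0$, contradicting the assumption that $A$ (and hence $T_m(x^{(n)},u^{(n)})\in A$) is bounded away from $\bfD_{\leq j-1}$. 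The conclusion follows.

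The only real obstacle is the first step, the translation between the metric $d_\infty$ on $\R_+^{\infty\,\downarrow}$ and the scalar quantity $x_j^{(n)}$; once this is done, the Skorohod bound above is routine because $T_m(x^{(n)},u^{(n)})$ and $y^{(n)}$ share the first $j-1$ jumps and the remaining jumps have uniformly small height.
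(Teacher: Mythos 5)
Your proof is correct and rests on the same key estimate as the paper's: the Skorohod distance from $T_m(x,u)$ to its truncation $\sum_{i=1}^{j-1}x_i 1_{[u_i,1]}\in\bfD_{\leq j-1}$ is at most $\sum_{i=j}^m x_i\leq (m-j+1)x_j$, combined with monotonicity of the coordinates. The paper phrases this directly (concluding $x_j>\delta/(m-j+1)$ on $T_m^{-1}(A)$) while you argue by contradiction with sequences, and you usefully make explicit the bound $d_\infty(x,\bfH_{\leq j-1})\geq 2^{-j}(x_j\wedge 1)$ that the paper leaves implicit; the two arguments are otherwise the same.
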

\begin{proof}
If $A\cap \bfD_{\leq m}=\es$, then $T_m^{-1}(A)=\es$. Therefore,
without loss of generality we may  take $A\subset \bfD_{\leq m}$. 
{Assume  $d_{\text{sk}} (A,\bfD_{\leq j-1})>\delta>0 $ 
and n}otice that $x\in\bfD_{\leq m}$ if and only if
\begin{align*}
x=\sum_{i=1}^m y_i1_{[u_i,1]} \text{ for } y_1\geq \dots \geq y_m\geq 0, u_i\in [0,1].
\end{align*}
{If $x\in A$, $\sum_{i=j}^m y_i>\delta$ as a consequence of 
 $d_{\text{sk}}(A,\bfD_{\leq j-1})>\delta$ and because the $y$'s are
 non-increasing, $y_j>\delta/(m-j+1)$. Consequently,  
\begin{align*}
T_m^{-1}(A)&\subset \Big\{ \bigl(x_i,i\geq1\bigr) \in \R_+^{\infty\,\downarrow} : x_j > \delta/(m-j+1) \Big\} \times [0,1]^\infty,
\end{align*}
and the latter set is  bounded away from $\bfH_{\leq j-1}\times
[0,1]^{\infty}$.}
\end{proof}


\bibliography{bibfile}
\end{document}